\documentclass[12pt]{article}
\usepackage{amssymb,amsmath,amsfonts,amsthm}
\title{Groups of generalized Moufang type and $\Z_2$-graded algebras.}
\author{Ilya Gorshkov }
\date{ }
\newcounter{parag}

\usepackage{amssymb}
\usepackage{amsfonts}
\usepackage{amsmath}
\usepackage{enumerate}

\newcommand\F{\mathbb{F}}

\newcommand\Z{\mathbb{Z}}

\newcommand\cF{\mathcal{F}}

\newcommand\al{\alpha}
\newcommand\bt{\beta}
\newcommand\gam{\gamma}
\newcommand\dl{\delta}

\newcommand\lm{\lambda}

\newcommand\la{\langle}
\newcommand\ra{\rangle}
\newcommand\lla{\la\!\la}
\newcommand\rra{\ra\!\ra}

\newtheorem{lem}{Lemma}
\newtheorem{Th}{Theorem}
\newtheorem{defi}{Definition}
\newtheorem{prop}{Proposition}
\newtheorem{question}{Question}
\newtheorem{rem}{Remark}
\newtheorem{cor}{Corollary}

\begin{document}
\maketitle
\textit{Abstract.}
 A pair $(G,T)$ is called a faithful odd transposition group if $T$ is a normal set of involutions generating the group $G$ and the product of any two distinct elements of $T$ has odd order. We introduce a special subclass of such groups, a \emph{generalized Moufang group of $p$-type} (or $GM(p)$-type), in which the product of any two distinct involutions from $T$ has a fixed prime order $p$. For any such group $(G,T)$ and a scalar parameter $\eta$ in a field $\F$, we construct a non-associative, non-commutative algebra $A = A_{\F}(G,T,\eta)$.

We prove that every element of $T$ considered as an element of the algebra $A$, is a primitive semisimple idempotent, defining a $\Z_{2}$-grading of $A$. The Miyamoto group of $A$ with respect to $T$ is isomorphic to $G/Z(G)$. The algebra $A$ contains no nontrivial right ideals and, for a specific choice of the parameter $\eta$, admits a symmetric left Frobenius form.

When $G$ is a free Burnside group of odd prime period \(p\) extended by an involutory automorphism, the finiteness of $G$ is equivalent to the finite-dimensionality of $A_{\F}(G,T,\eta)$,  providing a reformulation of the Burnside problem. For $p=5$ and $\eta=-1/3$, the algebra generated by two idempotents from $T$ is left-axial and satisfies the Monster-type fusion law $\mathcal{M}(4/3, -4/3)$. For a prime $p>5$, the two-generated algebra is also axial, but obeys a more general fusion law.

Although the algebra $A_{\F}(G,T,\eta)$ is initially defined using a group $GM(p)$-type, we show that it admits an intrinsic, group-free characterization by axiomatizing a class of so-called $GM(p,\eta)$-type algebras. We prove that every algebra in this class is isomorphic to one arising from the construction above, establishing the equivalence of the two definitions.

\section*{Introduction}
Let $G$	be a group.	If $G$ is generated by a normal set of involutions $T$ such that the order of the product of any two elements from $T$ does not exceed $n$, then $(G,T)$ is said to be an $n$-transposition group and elements of $T$ are called $n$-involution.
	Fischer \cite{Fi64} first studied $3$-transposition groups in the special case where the product of any two distinct $3$-transpositions has order $3$, such groups were called Moufang type groups. He showed that a finite group of Moufang type is soluble and has a $3$-group of index $2$. In the course of studying simple $3$-transposition groups, Fischer \cite{Fi71} found 3 simple sporadic groups $Fi_{22}$, $Fi_{23}$, $Fi_{24}$. Cuypers and Hall \cite{CH95} explored the classification program and subsequently obtained a classification of finitely generated $3$-transposition groups. Their result implies that $3$-transposition groups are locally finite. It is known that the sporadic group Baby Monster is a $4$-transposition group. The largest sporadic group Monster is a $6$-transposition group and is generated by just three involutions.
	
	Let $G$ be a finite group and $\omega$ be a set of positive integers. A pair $(G, T)$ is called an $\omega$-transposition group if $T$ is a normal set of involutions of $G$, $G= \la T\ra$, and	$|uv|\in \omega $ for any choice of distinct $u$ and $v$ in $T$.
	Aschbacher \cite{Ash2} introduced the concept of odd transposition groups. If $\omega$ is the set of all positive odd integers, then $\omega\cup\{2\}$-transposition group $(G, T)$ is called an odd transposition group. Aschbacher described the odd transposition groups with trivial solvable radical \cite{Ash2}.
	
	In this paper, we introduce the concept of faithful odd transposition groups. If $\omega$ is the set of all positive odd integers, then $\omega$-transposition group $(G, T)$ is called an faithful odd transposition group. 
	Let us define a special subclass of odd transposition groups. Faithful odd transposition group $(G,T)$ is called a generalized Moufang group of type $n$ (briefly $GM(n)$-type) if the order of the product of any two elements of $T$ is $n$.
	
	Using Glauberman's $Z^*$-Theorem we get the following statement describing finite faithful odd transposition groups. 
	
\begin{Th}
	Let $(G,T)$ be a finite faithful odd transposition group. Then $G\simeq K(G)\rtimes \la t\ra$.
\end{Th}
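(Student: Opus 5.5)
Here $K(G)$ denotes the largest normal subgroup of odd order of $G$, i.e.\ $O(G)$, and $t$ is any element of $T$. The plan is to show that $O(G)$ has index $2$ and that $t\notin O(G)$; then $G=O(G)\rtimes\la t\ra$.

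\emph{Step 1: every $t\in T$ is isolated.} Fix $t\in T$ and suppose $t^g\neq t$ commutes with $t$. Since $T$ is normal, $t^g\in T$. But then $tt^g$ has order $2$, contradicting the hypothesis that products of distinct elements of $T$ have odd order. Hence $t^G\cap C_G(t)=\{t\}$, and Glauberman's $Z^*$-theorem gives $tO(G)\in Z(G/O(G))$.

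\emph{Step 2: the quotient $\bar G=G/O(G)$ has order at most $2$.} Each image $\bar u$ ($u\in T$) is central in $\bar G$ by Step 1. Since $G=\la T\ra$, the group $\bar G$ is generated by commuting elements of order dividing $2$, so it is elementary abelian $2$-group. For $u,v\in T$ the order of $uv$ is odd, so $\bar u\bar v$ has odd order in an elementary abelian $2$-group, hence $\bar u\bar v=1$. Thus $\bar u=\bar v$ for all $u,v\in T$, and $\bar G=\la\bar t\ra$ has order $1$ or $2$.

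\emph{Step 3: $t\notin O(G)$.} This holds because $t$ has order $2$ and $O(G)$ has odd order. Therefore $|G:O(G)|=2$, $O(G)\cap\la t\ra=1$ and $G=O(G)\la t\ra$. This yields $G\simeq K(G)\rtimes\la t\ra$.

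The only nontrivial input is the $Z^*$-theorem. The main care point is checking its hypothesis, namely that $t$ commutes with no other $G$-conjugate. This is immediate here because $T$ is a normal set and the product orders are odd; one needs the conjugates to remain in $T$ and to be distinct from $t$. Finiteness of $G$ is used exactly for the $Z^*$-theorem.
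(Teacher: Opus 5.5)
Your proof is correct and follows essentially the same route as the paper. Distinct commuting elements of $T$ would have a product of order $2$, so each $t\in T$ is isolated, and Glauberman's $Z^*$-theorem applies; then $G/K(G)$ is an elementary abelian $2$-group generated by the images of $T$, and the odd orders of the products force all those images to coincide. Your explicit remark that $t\notin K(G)$ gives index exactly $2$ and the splitting $G=K(G)\la t\ra$ with trivial intersection, a step the paper leaves implicit.
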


	The free Burnside group of rank $d$ and exponent $n$, denoted $B(d, n)$, is defined as the quotient group of the free group $F_d$ on $d$ generators and $N$ is the normal closure of the set $\{g^n \mid g \in F_d\}$ in $F_d$. In 1902, Burnside \cite{B02} formulated the conjecture: the groups $B(d, n)$ are finite. He himself proved that the groups $B(d,2)$ and $B(d,3)$ are finite for all $d$. In \cite{Sa40}, Sanov proved the finiteness of the group $B(d,4)$ for all $d$. Hall \cite{Ha58} proved the finiteness of the group $B(d,6)$ for all $d$.
	
	Novikov and Adian \cite{NA68} showed that the groups $B(d,n)$ are infinite for all odd $n\geq4361$ and $d$ greater than $1$. This result has been generalized many times. Atkarskaya, Rips and Tent \cite{R24} proved that $B(d, n)$ is infinite for all odd $n\geq 557$. Lysenok \cite{Ly} proved the infinity of groups $B(d, n)$ for $n\geq 16k$ where $k > 8000$ and $d$ greater than $1$.
	
	Let $G=\la x_1,....,x_d\ra$ be a free Burnside group of odd prime period $p$. Then $G$ has an automorphism $t$ of order two which inverts the free generators $x_1,...,x_d$. Note that the group $G.\la t\ra$ is generated by a set $T=\{t, x_1t,..., x_dt\}$ of involutions. It is also easy to see that $t$ and $x_it$ are conjugate in the dihedral group $\la t, x_i t\ra$. Thus $(G.\la t\ra, T)$ is a group of $GM(p)$-type. 
\begin{Th}
 Let $(G,T)$ be an odd transposition group and let $G$ have a bounded period. If $G/Z(G)$ is a finite group and $T$ is a finite set, then $G$ is a finite group.
\end{Th}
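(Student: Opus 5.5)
The natural route is Schur's theorem. If $G/Z(G)$ is finite, then the derived subgroup $G'$ is finite. Here the index of the centre being finite forces $G'$ to be finite, and no finite generation is needed. So the plan is to show that $G/G'$ is finite and then conclude that $G$ is finite, since it is an extension of the finite group $G'$ by the finite group $G/G'$.

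To handle $G/G'$, I would use the facts that $G=\la T\ra$ and that $T$ is finite. Then $G/G'$ is an abelian group generated by the finitely many images of the elements of $T$. Each element of $T$ is an involution, so each image has order at most $2$. Hence $G/G'$ is an elementary abelian $2$-group of rank at most $|T|$, and its order is at most $2^{|T|}$. This argument does not even use the bounded period hypothesis. Alternatively, one can argue that a finitely generated abelian group of bounded period is finite, which also gives the result.

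The conclusion is then $|G|\le |G'|\cdot 2^{|T|}<\infty$.

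The only nontrivial ingredient is Schur's theorem itself, which I will quote. If one wants to avoid it, there is a direct argument. Since $G/Z(G)$ is finite and $G$ is generated by the finite set $T$, the centre $Z(G)$ has finite index in a finitely generated group, so by Schreier's lemma $Z(G)$ is finitely generated. It is abelian, and it has bounded period because $G$ does, so $Z(G)$ is finite. Then $|G|=|G/Z(G)|\cdot|Z(G)|$ is finite. This second route uses the bounded period hypothesis, which is presumably the authors' intended argument.

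I expect no real obstacle. The only point needing care is checking finite generation of $Z(G)$ via Schreier, which follows from $|G:Z(G)|<\infty$ and $|T|<\infty$.
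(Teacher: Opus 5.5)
Your proof is correct.

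Your second route is exactly the paper's proof. Since $G=\la T\ra$ with $T$ finite, $G$ is finitely generated, so Schreier's lemma makes $Z(G)$ finitely generated. A finitely generated abelian group of bounded exponent is finite, and then $|G|=|G/Z(G)|\,|Z(G)|$. You also make explicit the finite generation of $G$, which the paper leaves implicit.

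Your primary route through Schur's theorem is genuinely different. It replaces the elementary Schreier lemma with the deeper Schur theorem, whose standard proof uses the transfer $G\to Z(G)$. In exchange, it never uses the bounded-period hypothesis: $G/G'$ is generated by at most $|T|$ images of involutions, so $|G/G'|\le 2^{|T|}$. So your first route proves more. Any group generated by a finite set of involutions with $G/Z(G)$ finite is finite, which makes both the bounded-period assumption and the odd-transposition condition unnecessary. The paper's route is shorter and needs only the elementary tools available under its stated hypotheses.

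In the faithful case you even get $|G/G'|\le 2$. Any two elements of $T$ have a product of odd order, so they are conjugate in the dihedral group they generate and have the same image in $G/G'$.
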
	
	The theory of axial algebras represents a contemporary approach that reveals profound connections between group theory and non-associative algebras. Its origins lie in the construction of the Griess algebra, whose automorphism group is the sporadic Monster group.
	
	Axial algebras were first introduced by \cite{HRS}. An axial algebra is a commutative but non-associative algebra generated by a special set of idempotents called axes. For each axis $a $, the multiplication operator $\text{ad}_a$ is semisimple, i.e. allowing the algebra to be decomposed into a direct sum of its eigenspaces.
	
	The multiplication structure in the $\cF$-axial algebra $A$ is controlled by a fusion law $\cF$. If the fusion law $\cF$ is $\mathbb{Z}_2$-graded, then each axis $a$ can be associated with $\mathbb{Z}_2$-grading of $A$ and an $A$ automorphism $\tau_a$ of order $2$, known as the Miyamoto automorphism. For a given generating set of axes $X$, the group generated by the corresponding Miyamoto automorphisms is called the Miyamoto group.

	Rowen and Segev \cite{RS} study non-commutative axial algebras generated by primitive idempotents whose left and right multiplication operators are semisimple and possess at most three distinct eigenvalues.
	
	%Современным инструментом, связывающим теорию групп с неассоциативной алгеброй, является \textbf{теория аксиальных алгебр}. Эта теория, истоки которой лежат в конструкции алгебры Гриса, группой автоморфизмов которой является самая большая спорадическая простая группа монстр. Впервеы аксиальные алгебры были введены в работе \cite{HRS}. Это коммутативные неассоциативные алгебры порожденные идемпотентами особого вида, называемыми \textbf{осями}. Каждая ось порождает полупростой линейный оператор левого умножения, произведения совственных подпространств относительно этого оператора определяются своей таблицей слияния.
	
	%В случае когда таблица слияния $\mathbb{Z}_2$-градуирована с каждой осью связана $\mathbb{Z}_2$-градуировка алгебры и автоморфизм порядка $2$, называемым \textbf{автоморфизмом Миямото}. 

By the term "algebra" we mean a non-associative and non-commutative $\F$-algebra, where $\F$ is a field of characteristic not $2$. In this paper, we construct algebras $A_{\F}(G, T, \eta)$ associated with a group $GM(p)$-type $(G,T)$ and parameter $\eta\in \F$.

Algebras constructed from groups of $GM(p)$-type $(G,T)$ have dimension $|T|$ and a Miyamoto group is isomorphic to $G/Z(G)$, where $Z(G)$ is the center of $G$. The left multiplication operators by the generating idempotents have $4$ eigenvalues and the entire algebra is the sum of eigenspaces. In particular, the following theorem is proved.

\begin{Th}
Let $(G,T)$ be a $GM(p)$-type group and $A=A_{\F}(G,T,\eta)$, where $\eta\in \F$ be such that $1,\eta(p-2)+1, 1-\eta$ and $\eta-1$ are distinct elements of $\F$. The following assertions hold:
\begin{enumerate}
	\item All elements of $T$ are primitive left semisimple idempotents in $A$.

    \item If $\F$ contains the $\frac{p-1}{2}$ roots of $-1$, then any element $a\in T$ is a right semisimple idempotent in $A$.
	
    \item $A$ contains no right ideals.
	
	\item For each idempotent $a\in T$, the decomposition $A=A^+_a\oplus A^-_a$ defines a $\Z_2$-grading of $A$.	
	With respect to these gradings $Miy(A,T)\simeq G/Z(G)$.
	%\item if $T$ is finite set, then $Aut(A)$ is finite group.
\end{enumerate}
\end{Th}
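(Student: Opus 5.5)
The plan is to work entirely with the $G$-action on the basis $T$ of $A=A_{\F}(G,T,\eta)$. The first step is to check from the defining multiplication table that conjugation by any $g\in G$, extended linearly to $A$, is an algebra automorphism. This holds because the product $a\cdot b$ for $a,b\in T$ is defined only in terms of the dihedral subgroup $\la a,b\ra$ and its involutions, and this data is preserved by conjugation.

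Next, fix $a\in T$. In a $GM(p)$-type group, each $\la a,b\ra$ with $b\neq a$ is dihedral of order $2p$ and contains exactly $p$ involutions of $T$. Two such subgroups through $a$ meet only in $\la a\ra$, so $T\setminus\{a\}$ is partitioned into blocks $D_b=(T\cap\la a,b\ra)\setminus\{a\}$ of size $p-1$. Each block is stable under conjugation by $a$, which pairs $c$ with $c^a$. This gives a decomposition
$$A=\F a\oplus\bigoplus_{b} \F D_b,$$
where each summand is invariant under the left multiplication $L_a$ and under the right multiplication $R_a$, since products inside a dihedral group stay inside its involutions.

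\textbf{Part 1.} On a block $\F D_b$, the operator $L_a$ is given by a $(p-1)\times(p-1)$ matrix that depends only on $p$ and $\eta$; it is a circulant-type matrix indexed by the rotations of the dihedral group. I would write this matrix down explicitly and diagonalize it, expecting eigenvalues $\eta(p-2)+1$ and $1-\eta$ on the $a$-symmetric vectors (combinations of $c+c^a$) and $\eta-1$ on the $a$-antisymmetric vectors ($c-c^a$). By the hypothesis that $1,\eta(p-2)+1,1-\eta,\eta-1$ are distinct, $L_a$ is semisimple, and its $1$-eigenspace is exactly $\F a$. This gives primitivity.

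\textbf{Part 2.} The same computation for $R_a$ produces a matrix whose characteristic polynomial factors through $x^{(p-1)/2}+1$-type factors. Adjoining the $\frac{p-1}{2}$ roots of $-1$ splits it into distinct linear factors, so $R_a$ is diagonalizable.

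\textbf{Part 4.} Let $A^+_a$ be the sum of the eigenspaces for $1$, $\eta(p-2)+1$ and $1-\eta$, and let $A^-_a$ be the $(\eta-1)$-eigenspace. By the previous step these are precisely the $\pm1$-eigenspaces of the automorphism $\tau_a$ given by conjugation by $a$. Hence $A^{\pm}A^{\pm}\subseteq A^+$ and $A^{\pm}A^{\mp}\subseteq A^-$ are automatic, so this is a $\Z_2$-grading with Miyamoto involution $\tau_a$. Then $Miy(A,T)$ is the image of $G=\la T\ra$ acting on $T$ by conjugation. The kernel of this action is $C_G(T)=Z(G)$ because $T$ generates $G$, and the action is faithful on $A$ because $T$ is a basis. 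Therefore $Miy(A,T)\simeq G/Z(G)$.

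\textbf{Part 3.} Let $I\neq 0$ be a right ideal. Take $0\neq x\in I$ and multiply on the right by suitable elements of $T$ and their eigenprojections, using the block structure, to reduce to an element of $I$ supported in $\F a\oplus \F D_b$ for some $a,b$. Then produce $a\in I$ itself, using the explicit block matrices and invertibility coming from the distinctness of the eigenvalues. Once $a\in I$, right multiplication gives $a\cdot c\in I$ for all $c\in T$. Combined with conjugacy of elements of $T$ in dihedral subgroups and connectedness of $T$, this forces $I=A$.

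I expect the main obstacle to be this last reduction: extracting a basis idempotent from an arbitrary element of a right ideal. The tool available is right multiplication, which is less well controlled than left multiplication, and Part 2 requires extra roots of $-1$. My first attempt would be to use the left-eigenspace decomposition of $x$ together with the relation $(x\cdot c)$ for $c\in D_b$ to isolate components.
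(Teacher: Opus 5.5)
\textbf{Parts 1, 2 and 4.} These follow the paper's route: the dihedral blocks $I(a,t)\setminus\{a\}$, the eigenvectors $z(a),y_i(a),x_i(a)$, $\tau_a$ equal to conjugation by $a$, and kernel $C_G(T)=Z(G)$. Reading $A^{\pm}_a$ as the $\pm1$-eigenspaces of the conjugation automorphism is a more direct route to the grading than the paper's chain of lemmas. There are two slips.

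\emph{Block invariance.} $\F D_b$ is not invariant under $L_a$ or $R_a$, because both $a*c$ and $c*a$ contain the term $\eta a$. The invariant block is $\F a\oplus\F D_b=\lla a,b\rra$, and $z(a)$ has an $a$-component.

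\emph{Part 2.} The characteristic polynomial of $R_a$ on a block does not have distinct roots in general. Label $D_b=\{c_k: k\in\Z_p\setminus\{0\}\}$ with $c_k^a=c_{-k}$ and $a^{c_k}=c_{2k}$. Then $c_k*a=(1-\eta)c_{2k}+\eta(a+\sum_{j\neq0}c_j)$. So on zero-sum symmetric vectors, and on antisymmetric vectors, $R_a$ is $(1-\eta)$ times a (signed) permutation matrix whose $\frac{p-1}{2}$-th power is $\pm1$. That is what gives diagonalizability. For $p=7$, each value $(1-\eta)\omega$ with $\omega^2+\omega+1=0$ occurs on both pieces.

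\textbf{The genuine gap is Part 3.} What you write there is a plan, not an argument.
\begin{enumerate}
\item Right multiplication does not localize supports: $d*c$ is spread over all of $I(c,d)$.
\item You give no way to produce $a$ from an element of $I$.
\item Your plan has no branch for the case where no element of $I$ has a nonzero component along any $a$.
\item $R_a$ is a poor tool here. It need not be semisimple. Its $1$-eigenspace can exceed $\F a$: for $p=5$ and $\eta=1-i$ over $\mathbb{C}$ the hypotheses hold, yet $R_a^2=1$ on $A^-_a$. Its spectra on $A^+_a$ and $A^-_a$ can overlap, as for $p=7$. Hence neither the projection onto $\F a$ nor $\tau_a$ is a polynomial in $R_a$.
\item ``Conjugacy and connectedness'' cannot move $a$ around $T$ unless you already know $I$ is $\tau_c$-stable.
\end{enumerate}

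\textbf{How the paper closes this.} Its proof uses stability of $I$ under $L_a$: it uses $a*w\in I$ for $w\in I$. Since $L_a$ is semisimple with four distinct eigenvalues, each eigenprojection of $w$ is a linear combination of $w$, $a*w$, $a*(a*w)$, $a*(a*(a*w))$. So $I$ is stable under $\tau_a$ and under the projection $w\mapsto\sigma_a(w)a$ onto $A_1(L_a)=\F a$. This gives a dichotomy.
\begin{enumerate}
\item If $\sigma_a(w)\neq0$ for some $w\in I$, then $a\in I$. Since $Miy(A,T)$ preserves $I$ and is transitive on $T$, we get $T\subseteq I$ and $I=A$.
\item Otherwise $\sigma_a(w)=0$ for all $a\in T$ and $w\in I$. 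Concretely, $\sigma_a(\sum_c\alpha_c c)=\alpha_a-\frac{1}{p-2}\sum_{c\neq a}\alpha_c$. The paper identifies $\sigma_a(w)$ with $(a,w)$ for its bilinear form and uses that this form has zero radical to conclude $w=0$.
\end{enumerate}
This dichotomy, with the nondegeneracy step closing the second branch, is the idea your sketch is missing.
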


%	Пусть $(G,T)$ -- группа строго $p$-транспозиций, $\mathbb F$-- поле характеристики $0$, $\mathbb F$-алгебра $A=A(G,T,\eta)$. Выполнены следующие утверждения:
	
%	1) алгебра $A$ простая;
	
%	2) если $G$ порождается подмножеством $\Omega\subseteq T$, то и $A$ как алгебра порождается множеством $\Omega$
	
%	3) Каждый элемент из $T$ является примитивным полупростым идемпотентом в $A$;
	
%	4) c каждым идемпотентом $a\in T$ связана $2$-градуировка $(A_a^+,A_a^-)$ алгебры $A$. Более того, если $b\in T$ и $a\neq b$, то $(A_a^+,A_a^-)$ и $(A_b^+,A_b^-)$ различные $2$-градуировки. 
	
%	5) группа $Miy(A,T)$ изоморфна $G/Z(G)$. 
%Очевидно, что если $T$ конечная группа, то и группа подстановок на множестве $T$ конечна. Следующий результат показывает, что в этом случае вся группа автоморфизмов алгебры $A(G,T,\eta)$ конечна.
%\begin{Th}
%	Пусть $(G,T)$ группа строго $n$-транспозиций. Если $T$ конечное множество, то $Aut(A(G,T,\eta))$ конечная группа.
%\end{Th}
We say that the field $\F$ has good characteristic relative to $(p,\eta)$ if the elements $1, \eta(p-2)+1, \eta-1$ and $1-\eta$ are distinct.

We also show that for $\eta=-\frac{1}{3}$ and a field $\F$ of good characteristic relative to $(5,\eta)$, the two generated algebra $A_{\F}(G,T,\eta)$ will be left axial with fusion law of Monster type.

\begin{prop}
	Let $(G,T)$ be a two generated group of $GM(5)$-type, $\eta=-\frac{1}{3}$ and $\F$ is a field of good characteristic relative to $(5,\eta)$. Every idempotent in $T$ is a left-axis of $\mathcal M(4/3,-4/3)$-type.
\end{prop}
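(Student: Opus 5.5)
Since $(G,T)$ is generated by two involutions $a,b\in T$ whose product has order $5$, I first pin down the group. $G$ is dihedral of order $10$ modulo its centre, and in fact $Z(G)=1$, so $G\simeq D_{10}$. Then $T$ is the single conjugacy class of the five reflections, and $A=A_{\F}(G,T,\eta)$ has dimension $|T|=5$. Fix $a\in T$. Conjugation by $a$ fixes $a$ and splits the other four reflections into two $\la a\ra$-orbits $\{b,b^a\}$ and $\{c,c^a\}$, where $c$ is the reflection "at distance two" from $a$ in the pentagon.

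The plan is to compute the left multiplication $L_a$ explicitly from the multiplication rule defining $A$, restricted to this $5$-element basis. By the Theorem above (part 1), with $p=5$ and $\eta=-\frac13$, the operator $L_a$ is semisimple and $a$ is primitive. The candidate eigenvalues are
\[
1,\qquad \eta(p-2)+1=0,\qquad 1-\eta=\tfrac43,\qquad \eta-1=-\tfrac43 .
\]
The good characteristic hypothesis says exactly that these four values are distinct. So $A=A_1\oplus A_0\oplus A_{4/3}\oplus A_{-4/3}$ with $A_1=\F a$.

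Next I would write the eigenvectors explicitly, using the $\la a\ra$-symmetry. The antisymmetric combinations $b-b^a$ and $c-c^a$ span the $(-1)$-eigenspace of the Miyamoto involution $\tau_a$, that is, $A^-_a$. By part 4 of the Theorem, $A^-_a$ is a sum of eigenspaces, and by the count of dimensions it should be exactly the two-dimensional $A_{-4/3}$. The symmetric part $A^+_a$ is spanned by $a$, $b+b^a$ and $c+c^a$. Inside it, $L_a$ has eigenvalues $1,0,\tfrac43$, each with multiplicity one. I would find explicit vectors: $u_0=(b+b^a)+\lambda(c+c^a)+\mu a$ for $A_0$, and similarly $u_{4/3}$ for $A_{4/3}$. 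The coefficients are determined by a $3\times 3$ linear system whose entries are the structure constants of $A$ involving $a$.

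It remains to verify the Monster fusion law $\mathcal M(\frac43,-\frac43)$. This means $A_1$ multiplies each eigenspace into itself, and
\[
A_0A_0\subseteq A_0,\quad A_0A_{4/3}\subseteq A_{4/3},\quad A_{4/3}A_{4/3}\subseteq A_1\oplus A_0,
\]
\[
A_{\pm}A_{-4/3}\subseteq A_{-4/3}\ \text{ for } A_{\pm}\in\{A_0,A_{4/3}\},\quad A_{-4/3}A_{-4/3}\subseteq A_1\oplus A_0\oplus A_{4/3}.
\]
Since $A$ is not commutative, each of these must hold in both orders. The $\Z_2$-grading from part 4 of the Theorem already gives everything that concerns parity. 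That is, products with exactly one factor in $A_{-4/3}$ land in $A^-_a=A_{-4/3}$, and $A_{-4/3}A_{-4/3}\subseteq A^+_a$. Products involving $a$ on the left are immediate from the eigenvector property. For $a$ on the right, I would use the explicit multiplication: right multiplication by $a$ preserves $A^\pm_a$.

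So the real content is three checks inside the three-dimensional space $A^+_a$. First, $u_0u_0$ and $u_0u_{4/3}$, $u_{4/3}u_0$ have no $a$-component and no unwanted component. Second, $u_{4/3}u_{4/3}$ has no $A_{4/3}$-component. Third, $u_0u_0$ has no $A_{4/3}$-component. These are finitely many explicit computations with the structure constants specialised to $\eta=-\frac13$, where the miracle that makes $0$ an eigenvalue should also make the unwanted components vanish.

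I expect this last step to be the main obstacle: the coefficient computations in $A^+_a$ for the non-commutative products in both orders. To organise it, I would project each product onto the eigenspaces using the explicit idempotent polynomials in $L_a$, for example $\prod(L_a-\lambda)/(\mu-\lambda)$. I would then check that the relevant projections vanish identically at $\eta=-\frac13$.
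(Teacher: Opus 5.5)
Your route is the paper's: there Proposition 1 is read off from the explicit products in the eigenbasis $a,z,y_1,x_1,x_2$ of $L_a$ (Lemmas~\ref{prod1}, \ref{prod2}, \ref{prod3}), with the $\Z_2$-grading handling everything that involves $A_{-4/3}(L_a)=A_a^-$. However, your checklist has a gap at the products with $a$ as the \emph{right} factor. The paper's definition of a left axis quantifies over ordered pairs, and the Monster law has $1\circ 0=\emptyset$ and $1\circ\frac43=\{\frac43\}$. So you need $z*a=0$ and $y_1*a\in A_{4/3}(L_a)=\F y_1$. That right multiplication by $a$ preserves $A_a^{\pm}$ only gives $z*a,\ y_1*a\in A_a^+=\F a\oplus\F z\oplus\F y_1$, and neither condition is among your three checks. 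Your gloss ``$A_1$ multiplies each eigenspace into itself'' would even accept $z*a$ equal to a nonzero multiple of $z$, which $1\circ 0=\emptyset$ forbids. The repair takes two lines. First, $z=\frac13a+\gamma$, where $\gamma$ is the sum of the five elements of $T$, and $\gamma$ commutes with every element of $T$ (Lemma~\ref{bemgam}). Hence $z*a=a*z=(1+3\eta)z=0$ at $\eta=-\frac13$. Second, $y_1*a=(\eta-1)y_1=-\frac43y_1$ (Lemma~\ref{prod1}). The products $x_i*a$ are fine by the grading, since $A_a^-$ is the whole $-\frac43$-eigenspace. With these added, the rest of your plan works as you expect: $z*z=-\frac13z$, $z*y_1=\frac19y_1$, $y_1*z=-\frac79y_1$ and $y_1*y_1=\frac73z-\frac{64}{9}a$.

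A separate caveat, which you inherit from the paper rather than introduce: the primitivity you import from Theorem 3(1) is two-sided in the paper's definition, and its right half (Lemma~\ref{ra2}) fails in characteristic $5$. That characteristic counts as good for $(5,-\frac13)$, because $1,0,\frac43,-\frac43$ become $1,0,3,2$. There $R_a$ acts on $A_a^-$ by $x_1\mapsto(1-\eta)x_2$ and $x_2\mapsto-(1-\eta)x_1$, so $R_a^2=-(1-\eta)^2=1$ on $A_a^-$. Then $x_1+3x_2$ is fixed by $R_a$, and $\dim A_1(R_a)=2$. So either exclude characteristic $5$, or read primitivity of a left axis as $\dim A_1(L_a)=1$.
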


The next two propositions allow us to distinguish between algebras for group $GM(5)$-type and groups $GM(p)$-type, where $p$ is a natural greater than $5$.

\begin{prop}
Let $(G,T)$ be a two generated group of $GM(p)$-type, where $p>5$, $\eta=-\frac{1}{p-2}$, $\F$ is a field of characteristic good related to $(p,\eta)$, and $A=A_{\F}(G,T,\eta)$. Every idempotent in $T$ is a left-axis of $\mathcal{GM}(\lm,-\lm)$-type.
\end{prop}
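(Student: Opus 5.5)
The plan is to reduce everything to an explicit $p$-dimensional computation. First, identify the group. If $(G,T)$ is two-generated of $GM(p)$-type, say $G=\la a,b\ra$ with $a,b\in T$, then $ab$ has order $p$ and $G$ is dihedral of order $2p$. Indeed, $G=\la ab\ra\rtimes\la a\ra$, and a cyclic group of prime exponent $p$ generated by one element has order $p$; this is the rank-one case of the Burnside setting. Hence $T$ is the class of reflections $t_i=(ab)^i a$, $i\in\Z_p$, and $A=A_{\F}(G,T,\eta)$ has basis $\{t_0,\dots,t_{p-1}\}$. The rotation $ab$ acts transitively on $T$ by conjugation, which by the Theorem induces automorphisms of $A$. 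So it suffices to treat the single axis $a=t_0$. The reflection $a$ fixes $t_0$ and swaps $t_i\leftrightarrow t_{-i}$.

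Second, write down $L_a$ explicitly in the basis $T$ using the defining multiplication of $A_{\F}(G,T,\eta)$. I expect $a\cdot t_i$ to depend only on the pair $\{i,-i\}$, so $L_a$ preserves the decomposition $\F t_0\oplus\bigoplus_{i=1}^{(p-1)/2}\la t_i,t_{-i}\ra$ up to the contribution of the total sum $\sum_i t_i$. By the Theorem, $L_a$ is semisimple with eigenvalues among $1$, $\eta(p-2)+1$, $1-\eta$ and $\eta-1$.

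Now specialize to $\eta=-\frac{1}{p-2}$. Then $\eta(p-2)+1=0$, and the remaining eigenvalues are $\lm=1-\eta=\frac{p-1}{p-2}$ and $-\lm$; for $p=5$ this recovers $4/3,-4/3$, consistent with the previous Proposition. I will produce explicit eigenvectors:
\begin{itemize}
\item $a$ itself, for the eigenvalue $1$;
\item a combination of $a$ and $\sum_i t_i$, for the eigenvalue $0$;
\item vectors of the form $t_i+t_{-i}-c\,a$ (suitably normalized), spanning $A_a^+\cap\ker(L_a-\lm)$;
\item vectors $t_i-t_{-i}$, spanning the $-\lm$-eigenspace, which lies in $A_a^-$.
\end{itemize}
Checking that these are eigenvectors and that the dimensions add up to $p$ gives both semisimplicity and primitivity, i.e.\ that the $1$-eigenspace is $\F a$.

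Third, verify the fusion law. The $\Z_2$-grading $A=A_a^+\oplus A_a^-$ from the Theorem immediately forces the products of two odd eigenvectors, and of odd with even eigenvectors, to land in the correct graded part. This disposes of the $-\lm$ rows and columns up to determining exactly which even eigenvalues occur. The remaining work is the products $u\cdot v$ with $u,v\in\{\F a,\ A_0,\ A_\lm\}$, taken in both orders since $A$ is not commutative. These must be decomposed along the eigenspaces of $L_a$ and compared with the table defining $\mathcal{GM}(\lm,-\lm)$.

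The main obstacle is this computation for general $p$, in particular the products $A_\lm\cdot A_\lm$ and $A_\lm\cdot A_0$. These involve sums of $t_{i}t_{j}$ over all pairs, so one must track which rotation indices $i\pm j$ appear. To handle this I would pass to a Fourier-type basis over $\Z_p$, using the rotation symmetry so that products of the form $t_i t_j$ are expressed through $t_{2i-j}$-type terms. With this basis the products become convolutions, and one can read off their components in each eigenspace; the extra entries that do not occur for $p=5$ should explain why the fusion law here is $\mathcal{GM}(\lm,-\lm)$ rather than of Monster type.
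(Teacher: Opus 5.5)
Your overall route is the paper's. Proposition~2 is read off from the multiplication table of the $p$-dimensional dihedral algebra in the $L_a$-eigenbasis $a,z,y_i,x_i$ (Lemmas~\ref{prod1}--\ref{prod3}), and the $\Z_2$-grading settles every entry involving $-\lambda$. However, one step you state explicitly would fail: no vector of the form $t_i+t_{-i}-c\,a$ is a $\lambda$-eigenvector. With $\gamma=\sum_k t_k$ we have $a*t_i=(1-\eta)t_{-i}+\eta\gamma$ for $i\neq 0$. Hence $L_a(t_i+t_{-i}-c\,a)=\lambda(t_i+t_{-i})+2\eta\gamma-c\,a$, and the $\gamma$-term cannot be absorbed by a multiple of $a$. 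The $\lambda$-eigenspace is $\{\sum_{i=1}^{(p-1)/2}c_i(t_i+t_{-i}) : \sum_i c_i=0\}$, of dimension $\frac{p-3}{2}$; these are the paper's $y_i(a)$. Taking one vector of your form per pair $\{i,-i\}$ would already give $p+1$ eigenvectors in a $p$-dimensional space.

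You have also misplaced the real work, and the checks that matter are left undone. In $\mathcal{GM}(\lambda,-\lambda)$ the entry $\lambda\circ\lambda=\{1,0,\lambda\}$ is all of $A_a^+$, so $A_\lambda*A_\lambda\subseteq A_a^+$ already follows from the grading and needs no Fourier analysis. What must genuinely be checked are the right multiplications, which $L_a$ does not control: $0\circ 1=\varnothing$, $0\circ 0$, $0\circ\lambda$, $\lambda\circ 0$ and $\lambda\circ 1$. Use $t_i*t_j=(1-\eta)t_{2i-j}+\eta\gamma$ for $i\neq j$ and $z=\gamma-\eta a$, and note $\gamma*t_j=t_j*\gamma=\eta t_j$ because $1+(p-2)\eta=0$. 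Then each check is one line:
\begin{itemize}
\item $z*a=(1+(p-2)\eta)\gamma=0$;
\item $z*z=\eta z$;
\item for $v=\sum_i v_it_i\in A_\lambda$: $z*v=\eta^2 v$, $v*a=\lambda v'$ and $v*z=\eta v-\eta\lambda v'$, where $v'=\sum_i v_i t_{2i}$.
\end{itemize}
Here $v'$ again lies in $A_\lambda$, because index doubling preserves the conditions $v_0=0$, $v_{-i}=v_i$ and $\sum_i v_i=0$.

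Finally, the paper's definition of a primitive idempotent also requires $\dim A_1(R_a)=1$, and your outline never addresses this (the paper cites Lemma~\ref{ra2}). On $A_\lambda$, $R_a$ acts as $\lambda$ times the doubling permutation of the pairs $\{i,-i\}$. So this condition needs a real argument and depends on the characteristic. For example, take $p=7$ over $\F_7$ and put $P_k=t_k+t_{-k}$; then the vector $P_1+4P_2+2P_3$ is fixed by $R_a$ and is not a multiple of $a$.
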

\begin{prop}
	Let $(G,T)$ be a group of $GM(p)$-type, where $p>5$, $\F$ is a field, $\eta\in \F$, and characteristic of $\F$ is good related to $(p,\eta)$ and $A=A_{\F}(G,T,\eta)$. All idempotents of $T$ are not a left $\mathcal M(\al,-\al)$-axis for any value of the parameter $\eta $.
\end{prop}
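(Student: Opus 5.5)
The plan is to rule out every $\eta$ by looking only at eigenvalues, except for finitely many values where the eigenvalues fit the Monster pattern, and to refute those remaining values with an explicit product that breaks the fusion law. The work takes place inside the subalgebra coming from a dihedral subgroup. For distinct $a,b\in T$ the subgroup $D=\la a,b\ra$ is dihedral of order $2p$, and $T_D=T\cap D$ is its class of $p$ involutions. I will first check from the multiplication rule of $A_{\F}(G,T,\eta)$ that the span of $T_D$ is a subalgebra isomorphic to $A_{\F}(D,T_D,\eta)$.

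If $a$ were a left $\mathcal M(\al,-\al)$-axis of $A$, its eigenvectors lying in this subalgebra would have to obey the same fusion law there. So it suffices to show that $a$ is not a left $\mathcal M(\al,-\al)$-axis of the $p$-dimensional algebra $B=A_{\F}(D,T_D,\eta)$. Label $T_D=\{a=b_0,b_1,\dots,b_{p-1}\}$ so that conjugation by $a$ sends $b_i\mapsto b_{-i}$.

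\textbf{Step 1: eigenspaces of $L_a$ on $B$.} Since $L_a$ commutes with the Miyamoto involution of $a$, the symmetric combinations $b_i+b_{-i}$ and the antisymmetric combinations $b_i-b_{-i}$ are preserved separately. Diagonalising $L_a$ on these uses the circulant structure coming from $\la ab\ra\cong\Z_p$. By the theorem stated earlier, the eigenvalues are $1$ (on $a$), $\eta(p-2)+1$, $1-\eta$ and $\eta-1$, and $L_a$ is semisimple. I will write down explicit eigenvectors for each eigenvalue in the basis $b_i$, together with their multiplicities: presumably one-dimensional for $\eta(p-2)+1$, and $(p-1)/2$- and $(p-3)/2$-dimensional spaces for $\pm(1-\eta)$.

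\textbf{Step 2: reduction to two values of $\eta$.} The eigenvalue set of a $\mathcal M(\al,-\al)$-axis must lie in $\{1,0,\al,-\al\}$. Good characteristic makes the four eigenvalues distinct and nonzero except possibly $\eta(p-2)+1$. Hence $\{1-\eta,\eta-1\}=\{\al,-\al\}$, and $\eta(p-2)+1\in\{0,\al,-\al\}$.
\begin{itemize}
\item The option $\eta(p-2)+1=1-\eta$ forces $\eta=0$, which is excluded because then $1-\eta=1$.
\item So either $\eta=-\frac{1}{p-2}$, where the eigenvalue becomes $0$ and $\al=\frac{p-1}{p-2}$, or $\eta=-\frac{2}{p-3}$, where $\eta(p-2)+1=\eta-1$ and the corresponding eigenspace merges with the $(\eta-1)$-eigenspace.
\end{itemize}
For every other $\eta$ the proof is finished at this point.

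\textbf{Step 3: the two exceptional values.} For each of $\eta=-\frac{1}{p-2}$ and $\eta=-\frac{2}{p-3}$ I will pick two eigenvectors $u,v$ from Step 1 and compute $uv$ in the basis $b_i$ using the multiplication table. The aim is to find a product that is forbidden by the Monster-type law, where $\al\star\al$ and $(-\al)\star(-\al)$ must lie in $A_1+A_0$ and $\al\star(-\al)$ must lie in $A_{-\al}$.

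The natural first candidates are $u=v$ in the $\pm(1-\eta)$-spaces, in particular the antisymmetric vectors attached to a nontrivial frequency $k$. Their product has components at frequency $2k$ in the symmetric part, and these cannot all lie in $\F a+A_0$ once $p>5$, because then there are at least two distinct nonzero frequencies $k\neq\pm 2k$ available.

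This step is the main obstacle: I must actually exhibit a nonzero projection onto a forbidden eigenspace. The key point is that for $p=5$ there is only one nonzero frequency pair, which explains why the fusion law closes up there (Proposition 1); for $p\ge 7$ there are at least two, and this is what should produce the violation. Roots of unity in the eigenvectors can be avoided by working with the real combinations $\sum_i c_i(b_i\pm b_{-i})$ that solve the circulant recurrence. The result is then a polynomial identity in $\eta$ and $p$, which I will check does not vanish at the two exceptional values in good characteristic.
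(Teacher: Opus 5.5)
\textbf{Steps 1--2.} These reproduce the paper's reduction. The eigenvalues $1,\lm_1,\pm\lm_2$ must lie in $\{1,0,\al,-\al\}$. Since $\lm_2\neq 0$, this forces $\{\pm(1-\eta)\}=\{\pm\al\}$ and $\lm_1=0$, i.e.\ $\eta=-\frac{1}{p-2}$. Your second value $\eta=-\frac{2}{p-3}$ never arises, because good characteristic already gives $\eta(p-2)+1\neq\eta-1$.

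\textbf{The gap is Step 3.} It is not carried out, and the plan for it rests on a misreading of the Monster fusion law. In $\mathcal M(\al,\bt)$ one has $\bt\star\bt=\{1,0,\al\}$, not $\{1,0\}$. Take the assignment $\al=1-\eta$, $\bt=\eta-1$, which is the one matching the grading $A_a^-=A_{\eta-1}(L_a)$. Under it, a product of two antisymmetric vectors $b_i-b_{-i}$ may have any component in $A_a^+$, including a nonzero one in $A_{1-\eta}(L_a)$. So the witness you propose can never give a contradiction in this case. Your version of the law would even contradict Proposition~1. For $p=5$, $\eta=-\frac13$ one finds $(b_1-b_4)*(b_1-b_4)=s_1-(1-\eta)s_2-2\eta\Gamma$ (notation below), and this has a nonzero $(1-\eta)$-component.

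\textbf{What works.} The witness should be the square of a \emph{symmetric} $(1-\eta)$-eigenvector, and it excludes both assignments at once.
\begin{itemize}
\item Index $T_D$ so that $b_k^{b_j}=b_{2j-k}$. Then $b_j*b_k=(1-\eta)b_{2j-k}+\eta\Gamma$ for $j\neq k$, where $\Gamma=\sum_i b_i$.
\item Put $s_j=b_j+b_{-j}$, with indices mod $p$, so $s_{-j}=s_j$ and $s_0=2a$.
\item Then $A_{1-\eta}(L_a)=\{\sum_j c_js_j:\sum_j c_j=0\}$ and $A_1(L_a)+A_{\lm_1}(L_a)=\F a+\F\Gamma$. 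Hence a vector of $A_a^+$ lies in $A_1+A_{\lm_1}$ iff its $s_j$-coefficients ($1\le j\le \frac{p-1}{2}$) are all equal.
\end{itemize}
For $y=s_1-s_2\in A_{1-\eta}(L_a)$, a direct computation gives
\[
y*y=\eta(s_1+s_2)+(1-\eta)\,(s_1+s_2+s_6-s_4-s_5-2a)-4\eta\Gamma .
\]
Discard the $\Gamma$-term, which adds the same amount to every $s_j$. For $p=7$ the coefficients of $s_1,s_2,s_3$ are then $2-\eta,\ \eta,\ \eta-1$. For $p\ge 11$ those of $s_1,s_3$ are $1,0$. Either way $y*y$ has a nonzero $(1-\eta)$-component. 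This violates $\al\star\al=\{1,0\}$ if $\al=1-\eta$, and it violates $\bt\star\bt=\{1,0,\al\}$ if $\bt=1-\eta$. For $p=5$ the same formula gives equal coefficients, as Proposition~1 requires.

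\textbf{A caution about the paper's own witness.} Do not simply borrow it: the paper cites the $L(a)$-projection of $y_i*z$. At $\eta=-\frac1{p-2}$ one has $z=\frac1{p-2}(a-e)$, where $e=(2-p)\gam$ is the identity. So $y_i*z=\frac1{p-2}(y_i*a-y_i)\in A_{\lm_2}(L_a)$, as Lemma~\ref{prod1}(4) itself records, in agreement with $\al\star 0=\al$. The computation of $y*y$ above is what actually closes the argument.
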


As a corollary of Theorems $2$ and $3$, we obtain a connection between the dimension of the algebra $A_{\F}(B(n,p).\la t\ra,T,\eta)$ and the order of the group $B(n,p)$.

\begin{cor}
Let $G= B(n,p).\la t\ra$, where $p$ is a prime greater than $2$ and $t$ is an automorphism of order $2$ of $B(n,p)$ that inverts free generators, $T=t^G$, $\F$ be a field of good characteristics relative to $(p,\eta)$. Then $Miy(A_{\F}(G,T,\eta),T)\simeq G/Z(G)$, where $Z(G)$ is the center of $G$ and $G$ is finite if and only if $A_ {\F}(G,T,\eta)$ is finite-dimensional.
\end{cor}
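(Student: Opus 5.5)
The plan is to combine Theorem 3 (item 4) with Theorem 2, using that $\dim A_{\F}(G,T,\eta)=|T|$ by construction: $A$ has the elements of $T$ as a basis.

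First I check that $(G,T)$ is a $GM(p)$-type group. As noted before Theorem 2, $G=B(n,p).\la t\ra$ is generated by $t,x_1t,\dots,x_nt$, and these are conjugate to $t$. So $G=\la t^G\ra$. Every element of $T=t^G$ has the form $gt$ with $g\in B(n,p)$, since $t$ inverts the generators and hence acts on the coset $B(n,p)t$. For $u=g_1t$ and $v=g_2t$ we get $uv=g_1g_2^{-1}\in B(n,p)$, a group of exponent $p$. Hence $|uv|=p$ whenever $u\neq v$, and each $u\in T$ is an involution because $(gt)^2=g\,g^{t}$. I still need to verify that $(gt)^2=1$ for all $g$ in the relevant coset. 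This follows by conjugating $t$: $(t^h)^2=1$.

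The isomorphism $Miy(A,T)\simeq G/Z(G)$ is then exactly Theorem 3(4), because $\F$ has good characteristic relative to $(p,\eta)$, which is the hypothesis of Theorem 3.

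For the equivalence, suppose first that $G$ is finite. Then $T\subseteq G$ is finite, so $\dim A=|T|<\infty$. Conversely, suppose $A$ is finite-dimensional. Then $T$ is finite, as a basis of $A$. The Miyamoto group is generated by the automorphisms $\tau_a$, $a\in T$. Each such automorphism permutes $T$, acting as conjugation by $a$ under the isomorphism of Theorem 3. So $Miy(A,T)$ embeds in $\mathrm{Sym}(T)$ and is finite. Hence $G/Z(G)$ is finite. $G$ has bounded period: every element is either in $B(n,p)$, of order dividing $p$, or in $B(n,p)t$, an involution. So the exponent divides $2p$. $(G,T)$ is an odd transposition group, since all products have order $1$ or $p$. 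By Theorem 2, $G$ is finite.

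The main point requiring care is that finite $T$ gives finite $Miy(A,T)$. I would argue that an automorphism fixing each basis element $a\in T$ is the identity, so the action of $Miy(A,T)$ on $T$ is faithful. This uses that $T$ spans $A$ linearly, not just generates it as an algebra. One must also confirm that $\tau_a$ maps $T$ to $T$ (rather than to $\pm T$). I would read this off from the explicit form of $\tau_a$ in the proof of Theorem 3. If $\tau_a$ only permutes $T$ up to sign, the same argument works with the finite set $\pm T$.
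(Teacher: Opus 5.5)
Your argument is correct and follows the paper's route: Theorem 3(4) gives $Miy(A,T)\simeq G/Z(G)$, and $\dim A=|T|$ gives the forward direction. For the converse, finiteness of $T$ forces $G/Z(G)$ to be finite (this also follows without the Miyamoto group, since $G/Z(G)=G/C_G(T)$ embeds in $\mathrm{Sym}(T)$), and Theorem 2 then applies.

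One slip to fix: elements of the coset $B(n,p)t$ need not be involutions. For $n\ge 2$, $(x_1x_2t)^2=x_1x_2x_1^{-1}x_2^{-1}\neq 1$ because $B(n,p)$ is nonabelian. However, $(gt)^2\in B(n,p)$ still gives $(gt)^{2p}=1$, so $G$ has exponent dividing $2p$ and the bounded-period hypothesis of Theorem 2 holds.
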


The obtained results allow us to reformulate Burnside's  conjecture on free Burnside groups of prime period $p$ generated by $d$ elements in terms of non-associative algebras.

\begin{question}
	For what $d$ and $p$ does the algebra $A_{\F}(B(d,p).\la t\ra, T, \eta)$, where $t$ and $T$ are as in Corollary 1, have finite dimension?
\end{question}

The definition of algebras $A_{\F}(G,T,\eta)$-type involves a group of $GM(p)$-type, and in fact we define the structure of an algebra on the set of conjugate involutions. However, such a definition can be avoided. We define an algebra $A=A(T,\eta)$ of $GM(\eta, p)$-type by defining multiplications on its basis $T$ and defining some properties of the basis $T$. It follows directly from the definition that for any group $(G,T)$ of $GM(p)$-type, the algebra $A_{\F}(G,T,\eta)$ is an algebra of $GM(\eta, p)$-type. But we will also prove the converse.

\begin{Th}
Let $A(T)$ be an algebra of $GM(p,\eta)$-type. Then $(M,\{\tau_t|t\in T\})=Miy(A,T)$ is a $GM(p)$-group and $A(T)\cong A_{\F}(M,\{\tau_t|t\in T\},\eta)$.
\end{Th}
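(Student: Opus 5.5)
We do not yet have the explicit definitions of $A_{\F}(G,T,\eta)$ or of a $GM(p,\eta)$-type algebra. So the plan is written against the expected shape of the axioms. $A(T)$ has basis $T$. Each $t\in T$ is an idempotent, and products $t\cdot s$ for $s\neq t$ are given by a fixed formula in $\eta$. That formula involves an element $s^t\in T$: the ``reflection'' of $s$ in $t$. The axioms should say that $s\mapsto s^t$ is a permutation of $T$ of order at most $2$ fixing $t$. They should also say that for $t\neq s$ the composition of the two reflections has order exactly $p$ on $T$. Finally, they should say that the multiplication is equivariant under these permutations.

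The first step is to show that for each $t\in T$ the linear extension $\tau_t$ of $s\mapsto s^t$ is an automorphism of $A$. This should follow directly from the equivariance axiom on basis products. Next we show that $\tau_t$ coincides with the Miyamoto involution of the $\Z_2$-grading $A=A^+_t\oplus A^-_t$. For this, repeat the eigenspace computation from the proof of Theorem 3 (items 1 and 4). That computation only used the multiplication table on the span of the dihedral orbit $\la t,s\ra$-closure of $s$. Hence it transfers verbatim to an abstract algebra of $GM(p,\eta)$-type. Concretely, $A^+_t$ is spanned by $t$ and the vectors $s+s^t$, and $A^-_t$ by the vectors $s-s^t$. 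This identifies $Miy(A,T)$ with $M=\la \tau_t\mid t\in T\ra\le \mathrm{Sym}(T)$.

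The second step verifies that $(M,\{\tau_t\})$ is a $GM(p)$-group. Each $\tau_t$ is a nontrivial involution, since it fixes $t$ and moves other points. Normality follows from $\tau_t\tau_s\tau_t=\tau_{s^t}$, which is checked on basis vectors using equivariance: $\tau_t\tau_s\tau_t$ is an automorphism acting as the reflection in $s^t$. The product $\tau_t\tau_s$ has order $p$ by the axiom on orbit sizes. The key injectivity point is that $t\mapsto\tau_t$ is a bijection from $T$ onto $\{\tau_t\}$. Distinct $t,s$ give distinct permutations because $\tau_t$ fixes $t$ while $\tau_s$ moves $t$, since $t^s\neq t$ as $\tau_s\tau_t$ has odd order $p>1$.

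The third step builds the isomorphism. Define $\varphi:A(T)\to A_{\F}(M,\{\tau_t\},\eta)$ on bases by $t\mapsto \tau_t$. This is a linear bijection by the injectivity above. By the relation $\tau_{s^t}=\tau_t\tau_s\tau_t$, it sends the structure constants of $A(T)$, which are expressed via $s^t$, to those of $A_{\F}(M,\ldots)$, which are expressed via conjugation $\tau_s^{\tau_t}$. Hence $\varphi$ is multiplicative.

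I expect the main obstacle to be the first step, specifically showing that the Miyamoto involution is exactly the reflection permutation and not merely some automorphism. If the axioms are phrased only through eigenvalues and fusion rules rather than through an explicit involution on $T$, we must first reconstruct $s^t$ intrinsically. The natural choice is to read it off from the decomposition of $ts$ into $t$-eigenvectors. In that case one must also prove that the resulting map preserves $T$ and has the stated orbit structure, using primitivity and the eigenvalue distinctness guaranteed by good characteristic.
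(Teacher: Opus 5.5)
Your outline follows the paper's proof in its main steps. First, the reflection $\phi_t\colon s\mapsto s^t$ is an automorphism. In the paper this is simply axiom 4 of the definition of a $GM(p,\eta)$-type algebra, so the obstacle you anticipated in your first step does not arise. Second, $\phi_t$ equals the Miyamoto involution, because $s+s^t\in A_t^+$ and $s-s^t\in A_t^-$ inside each dihedral block. Third, normality comes from $\tau_t\tau_s\tau_t=\tau_{s^t}$. Finally, the isomorphism is the basis bijection $t\mapsto\tau_t$; your injectivity argument is more explicit than the paper's.

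The gap is in your second step. You take ``$\tau_t\tau_s$ has order exactly $p$ on $T$'' as an axiom, but the paper's definition contains no such global condition. It only requires three things: each two-generated subalgebra $\lla a,b\rra$ is a $p$-dimensional dihedral algebra, $T$ is covered by the blocks $I(a,t)$ with $t\in\Omega_a$, and each $\phi_a$ is an automorphism. A priori $\tau_t\tau_s$ could have larger, even infinite, order on the rest of $T$. Ruling this out is exactly what ``$M$ is a $GM(p)$-group'' means, and it is the substantive step of the paper's argument (the lemma $|\phi_a\phi_b|=p$). The same fact is needed in your third step. The structure constants of $A_{\F}(M,\{\tau_t\},\eta)$ involve the full involution set $I(\tau_t,\tau_s)$ of $\la\tau_t,\tau_s\ra$. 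This set equals $\{\tau_c\mid c\in I(t,s)\}$ only when that dihedral group has order $2p$, so matching $s^t$ with $\tau_s^{\tau_t}$ is not enough.

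To close the gap, argue as the paper does. Let $D=\la\tau_t,\tau_s\ra$ and $n=|\tau_t\tau_s|$.
\begin{enumerate}
\item Both generators permute $T$ and map $\lla t,s\rra$ into itself; for instance $\tau_t(\lla t,s\rra)=\lla t,s^t\rra$.
\item By linear independence of $T$, $T\cap\lla t,s\rra=I(t,s)$. So $D$ preserves $I(t,s)$ and acts on it as the dihedral group of order $2p$. Hence $p\mid n$, and the $D$-orbit of $t$ is exactly $I(t,s)$.
\item For $\theta\in D$ we have $\theta\tau_c\theta^{-1}=\tau_{\theta(c)}$, and $c\mapsto\tau_c$ is injective. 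So the conjugacy class of $\tau_t$ in $D$ is in bijection with this orbit and has exactly $p$ elements.
\item Since $s$ lies in that orbit, $\tau_s$ is conjugate to $\tau_t$ in $D$. This is impossible in a dihedral group of order $2n$ with $n$ even or infinite.
\item For odd $n$ the class of a reflection has size $n$, so $n=p$.
\end{enumerate}
The paper's version only asserts that the orbit has size $n$; the parity remark in step 4 is what justifies that.
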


Thus, Burnside's hypothesis on free Burnside groups of prime period $p$ generated by $d$ elements can be reformulated into a hypothesis that does not rely on the definition of a group.

\begin{question}
	For which $p$ does the algebra $GM(p,\eta)$-type generated by $d+1$ elements, is finite dimensional?
\end{question}

A large number of methods have now been developed for finding the dimension of an algebra. This reformulation of Burnside's conjecture opens up new approaches to its solution.

The paper is organized as follows. Section 1 presents preliminary results from group theory and proves Theorems 1 and 2. Section 2 provides basic definitions and preliminary results of non-associative algebras. In Section 3, Proposition 1, 2, and 3 are proved and useful results were obtained on $2$-generated algebras of $A_{\F}(G,T,\eta)$-type. In Section 4, Theorem 3 is proved. In Section 5 we define $GM(\eta,p)$-type algebras and prove Theorem 4.

\section{Elements of Group Theory}

Let $G$ be a group and $a,b\in G$ be involutions. We define the set $I(a,b)$ of all involutions of the dihedral group $\la a,b\ra$ generated by $a$ and $b$.

\begin{defi}
	A pair $(G,T)$ is called a faithful odd transposition group if $G$ is a periodic group and $T$ is a normal subset of involutions of $G$ such that $\la T\ra=G$, and $|ab|$ for any $a,b\in T$ is odd.
	%	Пара $(G,T)$ называется группой нечетных транспозиций если $G$ периодическая группа и $T$ нормальное подмножество инволюций из $G$ такое, что $\la T\ra=G$, и порядок произведения любых двух элементов из $T$ нечетен.	  
	\end{defi}
	
%	Введем важный подкласс групп нечетных транспозиций.
Let us introduce an important subclass of odd transposition groups.		
	\begin{defi}
		A faithful odd transposition group $(G,T)$ is called generalized Moufang type $n$, or $GM(n)$-type for short, if order of the product of any two distinct elements of $T$ is $n$.
		%Группа нечетных транспозиций $(G,T)$ называется обобщенно Муфангова типа $n$, коротко $GM(n)$-типа, если порядок произведения любых двух различных элементов из $T$ равен $n$.
	\end{defi}	

Note that if $n$ is not a prime and the product of involutions $a$ and $b$ has order $n$, then in the set $I(a,b)$ there exist two involutions $a'$ and $b'$ such that $|a'b'|<n$. Thus, if $(G,T)$ is a $GM(n)$-type group, then $n$ is a prime.
%Заметим, что если $n$ не простое нечетное число и произведение инволюций $a$ и $b$ имеет порядок $n$, то в множестве $I(a,b)$ найдутся две инволюции $a'$ и $b'$ такие, что $|a'b'|<n$. Таким образом, если $(G,T)$ группа $GM(n)$-type, то $n$-- простое число.   
	
	\begin{defi}
We say that $(G,T)$ a group of $GM(n)$-type is generated by a set $D$ if $D\subseteq T$ and $\la D\ra=G$.
%	Будим говорить, что $(G,T)$ группа строго $n$-транспозиций порождается множеством $D$ если $D\subseteq T$ и $D$ порождает группу $G$.  
	\end{defi}

Let $G$ be a finite group, $Z(G)$ be the center of $G$. We define the core of $G$, $K(G)$, to be the largest normal subgroup of odd order in $G$. Define $Z^*(G)$ to be the subgroup of $G$ containing $K(G)$ for which $Z^*(G)/K(G) = Z(G/K(G))$.
	
	\begin{lem}\cite[Corollary 1 (Glauberman’s $Z^{*}$-Theorem)]{Gl66}\label{Gl}
		Let $S$ be a Sylow $2$-subgroup of a finite group $G$. Suppose $x \in S$. A necessary and sufficient condition for $x \in Z^*(G)$ is that $C_G(x)\cap x^G=x$.
	\end{lem}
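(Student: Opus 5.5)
The statement is Glauberman's $Z^*$-theorem. I implicitly take $x$ to be an involution, since that is the only case used for odd transposition groups. I would split it into the easy direction (necessity) and the deep direction (sufficiency), handling the latter as Glauberman does, by block theory.

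\textbf{Necessity.} Suppose $x\in Z^*(G)$ and let $y=x^g\in C_G(x)$. Passing to $\bar G=G/K(G)$, the image $\bar x$ is central, so $\bar y=\bar x^{\bar g}=\bar x$. Hence $xy\in K(G)$, which has odd order. On the other hand $x$ and $y$ are commuting involutions, so $(xy)^2=1$. An element of odd order with square $1$ is trivial, so $xy=1$ and $y=x$. Thus $C_G(x)\cap x^G=\{x\}$.

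\textbf{Sufficiency: reduction.} Assume $C_G(x)\cap x^G=\{x\}$, i.e.\ $x$ is \emph{isolated}. I would argue by induction on $|G|$, taking $G$ to be a minimal counterexample.
\begin{itemize}
\item The hypothesis passes to $G/K(G)$. If $\bar y=\bar x^{\bar g}$ commutes with $\bar x$, then $\langle x,x^g\rangle K(G)/K(G)$ is a $2$-group. A Sylow argument inside the dihedral group $\langle x,x^g\rangle$, together with the coprime action on $K(G)$, lets us replace $x^g$ by a conjugate commuting with $x$. So we may assume $K(G)=1$.
\item The hypothesis also passes to subgroups $H\ni x$. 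By minimality, $x\in Z^*(H)$ for every proper subgroup $H$ containing $x$.
\end{itemize}
It then remains to show that $x\in Z(G)$ when $O(G)=1$.

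\textbf{Main step.} This is the obstacle, and it cannot be handled by elementary means. I would follow Glauberman's approach.
\begin{itemize}
\item Let $B_0$ be the principal $2$-block and consider the class functions built from the irreducible characters in $B_0$.
\item Using Brauer's second main theorem and the hypothesis $C_G(x)\cap x^G=\{x\}$, compare the class sums of $x^G$ with those of conjugates $x^g$ for which $xx^g$ has even order. Isolation forces all such products to have odd order.
\item Brauer's third main theorem identifies the principal block of $C_G(x)$ with $B_0$. This yields a character-theoretic identity showing that the map $g\mapsto$ ``$x$ versus $x^g$'' is constant on $2$-regular classes. Consequently $x$ acts trivially on every character of $B_0$ in the appropriate sense, i.e.\ $x\in Z(G/O_{2'}(G))$.
\end{itemize}

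\textbf{Conclusion.} Since $O_{2'}(G)=K(G)$, the main step gives $xK(G)\in Z(G/K(G))$, which is exactly $x\in Z^*(G)$. In practice, for the purposes of this paper I would simply invoke \cite{Gl66} for the main step rather than reproduce the modular character theory.
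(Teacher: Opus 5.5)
The paper gives no proof of this lemma: it is quoted from Glauberman \cite{Gl66} and used only for involutions in Theorem~1. Your plan also rests on \cite{Gl66} for the sufficiency direction, so it matches the paper, and your elementary necessity argument and reductions (isolation passing to $G/K(G)$ and to subgroups containing $x$) are correct. Two caveats: your block-theoretic outline of the main step is too vague to count as a proof (e.g.\ the ``$x$ versus $x^g$ constant on $2$-regular classes'' step has no precise content), so the citation carries all the weight; and the lemma as stated allows any $x\in S$, not just involutions, though necessity still goes through because for commuting conjugates $x,x^g$ the element $x(x^g)^{-1}$ is a $2$-element of $K(G)$, hence trivial.
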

%Данное утверждение позволяет описать конечные группы нечетных транстпозиций.
The following statement allows us to describe finite odd transposition groups.

\bigskip
\noindent\textbf{Theorem 1.}
\textit{ Let $(G,T)$ be a finite faithful odd transposition group. Then $G\simeq K(G)\rtimes \la t\ra$, where $|t|=2$.
	%Пусть $(G,T)$ конечная группа нечетных транспозиций. Тогда $G\simeq K(G)\rtimes \la t\ra$.
}
\begin{proof}
Suppose that there exists a Sylow $2$-subgroup of $G$ containing two distinct commuting elements $a,b\in T$. But then $|ab|=2$; a contradiction with the fact that $(G,T)$ is a faithful odd transposition group. It follows from Lemma \ref{Gl} that $T\subseteq Z^*(G)$. Let $^-: G\rightarrow G/K(G)$ be the natural homomorphism. Since $\la T\ra=G$, it follows that $\la\overline{T}\ra=\overline{G}$. Note that $\overline{T}$ lies in $Z(\overline{G})$. Thus $\overline{G}$ is an elementary abelian $2$-group. Suppose that $|\overline{G}|>2$. Let $a,b\in T$ be such that $\overline{a}\neq\overline{b}$. We have $\overline{a}\overline{b}=\overline{ab}$, in particular $|\overline{ab}|=2$; a contradiction with the fact that $|ab|$ is odd and divisible by $|\overline{ab}|$.

%	Допустим, что найдется силовская $2$-подгруппа содержащая два различных коммутирующих элемента $a,b\in T$. Но тогда $|ab|=2$; противоречие с тем, что $(G,T)$ группа нечетных транспозиций. Из леммы \ref{Gl} следует, что $T\subseteq Z^*(G)$. Пусть $\overline{\ }: G\rightarrow G/K(G)$ естественный гомоморфизм. Из того, что $\la T\ra=G$ следует, что $\la\overline{T}\ra=\overline{G}$. Заметим, что $\overline{T}$ лежит в $Z(\overline{G})$. Таким образом $\overline{G}$ элементарная абелева $2$-группа. Допустим, что $|\overline{G}|>2$. Пусть $a,b\in T$ такие, что $\overline{a}\neq\overline{b}$. Имеем $\overline{a}\overline{b}=\overline{ab}$, в частности $|\overline{ab}|=2$; противоречие с тем, что $|ab|$ нечетен и делится на $|\overline{ab}|$.  
\end{proof}

\begin{lem}[Schreier’s Lemma  \cite{Sh26}]\label{sh26}
	A finite index subgroup of a finitely generated group is finitely generated.
	
\end{lem}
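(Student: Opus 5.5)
The plan is to give the classical Schreier rewriting argument, producing an explicit finite generating set for the subgroup. Let $G=\la S\ra$ with $S$ finite, and replace $S$ by $S\cup S^{-1}$ so that every element of $G$ is a positive word in $S$. Let $H\le G$ with $|G:H|=n<\infty$. I will fix a right transversal $R=\{r_1=1,r_2,\dots,r_n\}$ of $H$ in $G$, so that $G=\bigsqcup_i Hr_i$. For $g\in G$, write $\overline{g}\in R$ for the representative of the coset $Hg$.

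The candidate generating set for $H$ is the set of Schreier generators
$$\mathcal S=\{\, r s\,(\overline{rs})^{-1} \;:\; r\in R,\ s\in S\,\}.$$
Each element of $\mathcal S$ lies in $H$, because $rs$ and $\overline{rs}$ lie in the same right coset of $H$. Also $|\mathcal S|\le n|S|$, so $\mathcal S$ is finite. It therefore remains only to show that $\la\mathcal S\ra=H$.

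For this step, take $h\in H$ and write $h=s_1s_2\cdots s_k$ with $s_j\in S$. Define $u_0=1$ and $u_j=\overline{u_{j-1}s_j}$, which equals $\overline{s_1\cdots s_j}$ since representatives respect right multiplication on cosets. Then I insert the telescoping product
$$h=(u_0s_1u_1^{-1})(u_1s_2u_2^{-1})\cdots(u_{k-1}s_ku_k^{-1})\,u_k .$$
Each bracket $u_{j-1}s_j(\overline{u_{j-1}s_j})^{-1}$ belongs to $\mathcal S$. Finally, $u_k=\overline{h}=\overline{1}=1$ because $h\in H$ and $1\in R$. Hence $h\in\la\mathcal S\ra$.

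The only point requiring care, and the main potential obstacle, is this telescoping identity. One must check that $\overline{\overline{x}\,s}=\overline{xs}$, which holds because $H\overline{x}=Hx$, so that the successive representatives $u_j$ are consistent. One must also use the normalization $1\in R$ to kill the final factor $u_k$; everything else is bookkeeping.
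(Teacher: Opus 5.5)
Your proof is correct: the Schreier generators $rs(\overline{rs})^{-1}$ lie in $H$, there are at most $n|S|$ of them, and the telescoping rewriting with $u_j=\overline{u_{j-1}s_j}$ and the normalization $1\in R$ expresses every $h\in H$ as a product of them. The paper gives no proof of its own and only cites Schreier's result. Your argument is the standard Schreier rewriting proof behind that citation, so it is essentially the same approach.
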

The following statement is a simple consequence of Schreier's Lemma.
%Следующее утверждение является простым следствием леммы Шрейера.

\bigskip
\noindent\textbf{Theorem 2.}
\textit{ 
	Let $(G,T)$ be an odd transposition group and let $G$ have a bounded period. If $G/Z(G)$ is a finite group and $T$ is a finite set, then $G$ is a finite group.
	%Пусть $(G,T)$ группа нечетных транспозиций и период группы $G$ ограничен. Если $G/Z(G)$ конечна и $T$ конечное множество, то и $G$ конечная группа.
}
\begin{proof}
Lemma \ref{sh26} implies that $Z(G)$ is a finitely generated subgroup. A finitely generated abelian group of finite exponent is finite. Thus $|G|=|G/Z(G)||Z(G)|<\infty$.
%	Из леммы \ref{sh26} следует, что $Z(G)$ конечно порожденная подгруппа. Конечно порожденная абелева группа конечного периода конечна. Таким образом $|G|=|G/Z(G)||Z(G)|<\infty$.  
\end{proof}

\section{Preliminaries in non-associative algebras}	
Let $\F$ be a field. For faithful odd transposition group $(G,T)$, we define a non-associative and non-commutative $\F$-algebra with basis $T$.	

\begin{defi}
Let $(G,T)$ be a faithful odd transposition group, $\Omega_T=\{|ab||a,b\in T\}\setminus\{1\}$. For each number $n\in\Omega_T$ we define an element $\eta_n\in\F\setminus\{0,1\}$. Let $\Gamma_T=\{\eta_n|n\in \Omega_T\}$.
We construct an $\F$-algebra with basis $T$ as follows:
$$
a * b = 
\begin{cases} 
a, & \text{if } b = a, \\
b^a+\delta(b^a), & \text{where } \delta(b^a)=\eta_n\sum_{x\in I(a,b)\setminus \{b^a\}}x \text{ and } n=|ab|.
\end{cases}
$$
We denote such an algebra by $A_{\F}(G,T,\Gamma_T)$ and call it the odd transposition algebra. In the case when $|\Omega_T|=1$ i.e. $(G,T)$ is a $GM(n)$-group, we write $A_{\F}(G,T,\eta)$.
%	Пусть $(G,T)$ группа нечетных транспозиций, $\Omega_T=\{|ab||a,b\in T\}$. Для каждого числа $n$ из $\Omega_T$ определим некоторый элемент $\eta_n\in\F$. Положим $\Gamma_T=\{\eta_n|n\in \Omega_T\}$. Построим $F$-алгебру с базисом $T$ следующим образом при $a\neq b$ положим $$a*b=b^a+\delta(b^a),\operatorname{where} \delta(b^a)=\eta_n\sum_{x\in I(a,b)\setminus b^a}x$$ и $$a*a=a$$. Обозначим такую алгебру $A(G,T,\Gamma_T)$ и будем называть ее алгеброй нечетных транспозиций, сокращенно $OT$-алгеброй. В случае когда $\Omega_H$ содержит одно число $n$, то будем писать $A(G,H,\eta)$. 
\end{defi}

%Элементы группы $G$ являются так же элементами алегебры $A(G,T,\Gamma_T)$. Мы зафиксируем, следующие обозначения.
The elements of the set $T$ are elements of the group $G$ and also elements of the algebra $A_{\F}(G,T,\Gamma_T)$. We will fix the following notation.
\begin{defi}
Let $(G,T)$ be a faithful odd transposition group and $A=A_{\F}(G,T,\Gamma_T)$, $\Omega\subseteq T$. We fix the following notation:
\begin{enumerate}
	\item $\la \Omega\ra$ is the subgroup of $G$ generated by the set $\Omega$;
	\item $\lla \Omega \rra$ is the subalgebra of $A_{\F}(G,T,\Gamma_T)$ generated by the set $\Omega$;
    \item when $|\Omega|=2$ we will call the algebra $\lla \Omega\rra$ a dihedral algebra;
    \item $L(\Omega)$ is the linear span of $\Omega$; 
    \item if $a,b\in T$, then $a^b=b^{-1}ab$ is the element of $T$ conjugated to $a$ via the element $b$;
    \item if $x=a_1+...+a_t$, where $a_i\in T, 1\leq i\leq t$, then $x^a=(a_1+...+a_t)^a=a_1^a+...+a_t^a$.
\end{enumerate}%Пусть $(G,T)$ группа нечетных транспозиций и $A=A(G,T,\Gamma_T)$, $\Omega\subseteq T$. Зафиксируем следующие обозначения.
%\begin{enumerate}
%	\item $\la \Omega\ra$ подгруппа группы $G$ порожденная множеством $\Omega;$
%	\item $\lla \Omega \rra$ подалгебра алгебры $A(G,T,\Gamma_T)$ порожденная множеством $\Omega$;
%	\item если $a,b\in T$, то $a^b=b^{-1}ab$ элемент из $T$ сопряженный с $a$ при помощи элемента $b$. Если $x=a_1+...+a_t$, где $a_i\in T, 1\leq i\leq t$, то $x=(a_1+...+a_t)_a=a_1^a+...+a_t^a$.
%\end{enumerate}	  
\end{defi}  

	%Пусть $(G,T)$ группа нечетных транспозиций. Заметим, что алгебра $A(G,T,\Gamma)$ коммутативна тогда и только тогда когда для любой пары $a,b\in T$ верно $a^b=b^a$. Если $|ab|>3$, то очевидно, что это условие не выполняется. Таким образом, $A(G,T,\Gamma)$ коммутативна только в случае когда $(G,T)$ группа строго $3$-транспозиций. 
	
	Let $(G,T)$ be the faithful odd transposition group. Note that the algebra $A_{\F}(G,T,\Gamma)$ is commutative if and only if for every pair $a,b\in T$ we have $a^b=b^a$. If $|ab|>3$, then this condition clearly fails. Thus, $A_{\F}(G,T,\Gamma)$ is commutative if and only if $(G,T)$ is $GM(3)$-type.
	
	\begin{defi} 
	Let $G=\langle a,b\rangle$ be the dihedral group generated by the involutions $a$ and $b$, of order $2p$, where $p$ is a prime. We enumerate the set $I(a,b)$ as follows: $a_1=a, a_2=b,..., a_{i+1}=a_{i-1}^{a_{i}}$. The basis $B(a,b)=\{a_1,...,a_p\}$ of the algebra $A_{\F}(G,I(a,b),\eta)$ will be called canonical.
	%Пусть $G=\langle a,b\rangle$ -- диэдральная группа порожденная инволюциями $a$ и $b$, порядка $2p$, где $p$ простое число. Перенумеруем множество $I(a,b)$ следующим образом: $a_1=a, a_2=b,..., a_{i+1}=a_{i-1}^{a_{i}}$. Базис $B(a,b)=\{a_1,...,a_p\}$ алгебры $A(G,I(a,b),\eta)$ будем называть каноническим.
	\end{defi}

	\begin{defi}
	A \emph{fusion law} $(\cF,\circ)$ over $\F$ is a finite set $\mathcal{F}$ of elements of $\F$ together
	with a map $\circ: \mathcal{F} \times \mathcal{F} \to 2^{\mathcal{F}}$. 
	\end{defi}
For small set $\mathcal{F}$, it is convenient to write the fusion law in table form. Figures \ref{M} and \ref{GM} show the fusion laws of the Monster type and generalized Monster type, respectively.
%Для небольших множеств $\mathcal{F}$ удобно закон слияния записывать в виде таблицы. На рисунках \ref{M} и \ref{GM} приведены таблицы слияния Монстрового и обобщенно монстрового типа соответственно.   

\begin{figure}[ht]
	\centering
	\begin{minipage}{0.45\textwidth}
		\centering
		\renewcommand{\arraystretch}{1.3}
		\begin{tabular}{|c||c|c|c|c|}
			\hline
			$\ast$ & $1$ & $0$ & $\alpha$ & $\beta$ \\
			\hline\hline
			$1$ & $1$ & $\varnothing$ & $\alpha$ & $\beta$ \\
			\hline
			$0$ & $\varnothing$ & $0$ & $\alpha$ & $\beta$ \\
			\hline
			$\alpha$ & $\alpha$ & $\alpha$ & $1,0$ & $\beta$ \\
			\hline
			$\beta$ & $\beta$ & $\beta$ & $\beta$ & $1,0,\alpha$ \\
			\hline
		\end{tabular}
		\caption{Monster type fusion law $\mathcal{M}(\alpha,\beta)$}\label{M}
		\label{fig:M1}
	\end{minipage}
	\hfill
	\begin{minipage}{0.45\textwidth}
		\centering
		\renewcommand{\arraystretch}{1.3}
		\begin{tabular}{|c||c|c|c|c|}
			\hline
			$\ast$ & $1$ & $0$ & $\alpha$ & $\beta$ \\
			\hline\hline
			$1$ & $1$ & $\varnothing$ & $\alpha$ & $\beta$ \\
			\hline
			$0$ & $\varnothing$ & $0,1$ & $\alpha$ & $\beta$ \\
			\hline
			$\alpha$ & $\alpha$ & $\alpha$ & $1,0,\alpha$ & $\beta$ \\
			\hline
			$\beta$ & $\beta$ & $\beta$ & $\beta$ & $1,0,\alpha$ \\
			\hline
		\end{tabular}
		\caption{Generalized Monster type fusion law $\mathcal{GM}(\alpha,\beta)$} \label{GM}
		\label{fig:GM1}
	\end{minipage}
\end{figure}

\begin{defi}
A fusion law $(\mathcal{F},\circ)$ is called $\Z_2$-graded if $\mathcal{F}=\mathcal{F}^+\cup\mathcal{{F}^-}$, $\mathcal{F}^+\cap\mathcal{F}^-=\emptyset$, and for any $\alpha^+,\beta^+\in \mathcal{F}^+$, and $\alpha^-,\beta^-\in \mathcal{F}^-$ the following statements hold $\alpha^-\circ\beta^-, \alpha^+\circ\beta^+\subseteq \mathcal{F}^+$, $\alpha^+\circ\beta^-,\alpha^-\circ\beta^+\subseteq\mathcal{F}^-$. In this case, $(\mathcal{F}^+,\mathcal{F}^-)$ is called a $\Z_2$-grading of the fusion law $\mathcal{F}$. If the set $\mathcal{F}^-$ is empty, then the $\Z_2$-grading $(\mathcal{F}^+,\mathcal{F}^-)$ is called trivial. 
%В этом случае $(\mathcal{F}_+,\mathcal{F}_-)$ называется 2-градуировкой закона слияния $(\mathcal{F})$. Если множество $\mathcal{F}_-$ пустое, то $2$-гадуеровка называется тривиальной.  
\end{defi}

Note that the fusion laws $\cal{M}(\al,\bt)$ and $\cal{GM}(\al,\bt)$ are $\Z_2$-graded. The decomposition $(\{0, 1, \al\}, \{\bt\})$ is a $\Z_2$-grading in both cases.

\begin{defi}
    An $\F$-algebra $A$ is called $\Z_2$-graded if there exist subspaces $A^+$ and $A^-$ such that $A=A^+\oplus A^-$, $A^+\cap A^-=0$, and for any $\alpha^+,\beta^+\in A^+$, and $\alpha^-,\beta^-\in A^-$ the following statements hold $\alpha^-*\beta^-, \alpha^+*\beta^+\in A^+$, $\alpha^+*\beta^-,\alpha^-*\beta^+\in A^-$. In this case, $(A^+,A^-)$ is called a $\Z_2$-grading of $A$. If the subspace $A^-$ is empty, then the $\Z_2$-grading $(A^+,A^-)$ is called trivial. 
\end{defi}
%Пусть $A$ неассоциативная и не коммутативная $\F$-алгебра.
Let $A$ be a non-associative and non-commutative $\F$-algebra.
Put $a \in A$ be an idempotent, $X\in \{L, R\}$, and $\lambda, \delta \in \F$.
We define the left and right multiplication maps $L_a(b):=a*b$ and	$R_a(b):=b* a$. Write $A_\lambda(X_a) = \{v \in A : X_a(v) = \lambda v\}$. Note that it may happen that  
	$A_\lambda(X_a) = 0$. If $\Omega$ is a set of elements of $\F$, then $A_{\Omega}(X_a)=\bigoplus_{\alpha\in\Omega}A_{\alpha}(X_a)$.
	
	Since $a$ is an idempotent, then $dim(A_1(X_a))\geq 1$.
	We say that an idempotent $a$ is primitive if $dim(A_1(L_a))=dim(A_1(R_a))=1$. A linear operator $L$ on an algebra $A$ is called semisimple if $A$ is decomposed into a sum of eigenspaces of $L$. An idempotent $a$ is called left (right) semisimple if the linear operator $L_a$ ($R_a$) is semisimple. 

\begin{defi}

Let $(\cF,\circ)$ be some fusion law, $a\in A$ be a primitive semisimple idempotent such that $A=A_{\cF}(X_a)$, and for any $\alpha,\beta \in \cF$ and any $x\in A_{\alpha}(X_a), y\in A_{\beta}(X_a)$, we have $x* y\in A_{\alpha\circ\beta}(X_a)$. We say that $a$ is the left $(\mathcal{F},\circ)$-axis if $X=L$ and $a$ is the right $(\mathcal{F},\circ)$-axis if $X=R$.

%Пусть $a\in A$ примитивный полупростой идемпотент такой, что $A=A_{\mathcal{F}}(X_a)A$, и для любых $\alpha,\beta \in \mathcal{F}$ и любых $x\in A_{\alpha}(X_a), y\in A_{\beta}(X_a)$ верно $x\cdot y\in A_{\alpha*\beta}(X_a)$, где $X\in\{L,R\}$. Будем говорить, что $a$ левая $(\mathcal{F},*)$-ось если $X=L$ и $a$ правая $(\mathcal{F},*)$-ось если $X=R$. 	
\end{defi}	

In particular, if for some idempotent $a$ the operator $L_a$ is semisimple, then $a$ is a left axis with the maximally general fusion law.
    
If the algebra is commutative, then $R_a=L_a$ and therefore, instead of the right $(\mathcal{F},\circ)$-axis we can say $(\mathcal{F},\circ)$-axis.

Let $(\F,\circ)$ be a $\Z_2$-graded fusion law, and $a\in A$ be the left (right) $(\F,\circ)$-axis. Then $Gr(L_a)=(A_{\cF^+}(L_a),A_{\cF^-}(L_a))$ ($Gr(R_a)=(A_{\cF^+}(R_a),A_{\cF^-}(R_a))$) is a $\Z_2$-grading of $A$.

\begin{defi}	
An algebra $A$ is called left (right) $(\cF,\circ)$-axial if it is generated by left (right) $(\cF,\circ)$-axes.
%Алгебра $A$ называется лево(право) $(\mathcal{F},*)$-аксиальной если порождена левыми(правыми) $(\mathcal{F},*)$-осями.
\end{defi}

Let $(\cF^+,\cF^-)$ be a $\Z_2$-grading of the fusion law $(\cF,\circ)$, and $A$ be a left or right $(\cF,\circ)$-axial algebra. In this case, for any $\cF$-axis $a\in A$, we have $(A_{\cF^+}(L_a), A_{\cF^-}(L_a))$ is a $\Z_2$-grading of $A$.

%Пусть $(\mathcal{F}_+,\mathcal{F}_-)$ это 2-градуировка закона слияния $(\mathcal{F},*)$, и $A$ левая или правая $(\mathcal{F},*)$-аксиальная алгебра порожденная множеством осей $\Omega$. В этом случае для любой оси $a\in\Omega$ разложение алгебры $A$ в сумму $A_{\mathcal{F}_+}(X_a)\oplus A_{\mathcal{F}_-}(X_a)$ является 2-градуировкой алгебры $A$, где $X\in\{L,R\}\}$.

\begin{defi}
Let $Gr=(A^+,A^-)$ be a $\Z_2$-grading of $A$. We define the mapping $\tau(Gr):A\rightarrow A$ as follows $\tau(Gr)x^+=x^+, \tau(Gr)x^-=-x^-$, where $x^+\in A^{+}$ and $x^-\in A^{-}$. 
%Пусть $a\in \Omega$. Определим отображение $\tau_a:A\rightarrow A$ следующим образом $\tau_ax_+=x_+, \tau_ax_-=-x_-, x_+\in A_{\mathcal{F}_+}(X_a), x_-\in A_{\mathcal{F}_-}(X_a)$. Отображение $\tau_a$ является изоморфизмом алгебры $S$ и называется $\tau$-инволюцией оси $a$.  
\end{defi}

The map $\tau(Gr)$ is an automorphism of the algebra $A$.
If a $\Z_2$-grading $Gr$ is defined as a $\Z_2$-grading of some $(\mathcal{F},\circ)$-axis $a$, then we will write $\tau_a$ instead $\tau(Gr)$, and say that $\tau_a$ is a Miyamoto involution of $a$.
\begin{lem}\cite[Lemma 5.1]{HRS2}\label{tauaut}
If $t$ is an automorphism of $A$ and $a$ is an axis, then $a^t$ is an axis with
$\tau(a^t) = \tau(a)^t$.
\end{lem}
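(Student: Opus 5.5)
The statement to prove is Lemma \ref{tauaut}: for an automorphism $t$ of $A$ and an axis $a$, the element $a^t$ is again an axis, and $\tau(a^t)=\tau(a)^t$. The argument is a transport of structure along $t$. Throughout I write $x^t$ for the image of $x$ under $t$, and $\tau(a)^t=t^{-1}\tau(a)t$, matching the convention $a^b=b^{-1}ab$ for conjugation.

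\emph{Step 1: $t$ intertwines the multiplication operators.} Since $t(x*y)=t(x)*t(y)$ for all $x,y\in A$, we have
\[
t\circ X_a=X_{a^t}\circ t \qquad (X\in\{L,R\}).
\]
Consequently $t$ maps $A_\lambda(X_a)$ bijectively onto $A_\lambda(X_{a^t})$ for every $\lambda\in\F$.

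\emph{Step 2: $a^t$ is an axis of the same type.} Several properties follow from Step 1.
\begin{itemize}
\item $a^t$ is an idempotent, since $t(a)*t(a)=t(a*a)=t(a)$.
\item $\dim A_1(X_{a^t})=\dim A_1(X_a)=1$, so $a^t$ is primitive.
\item $A=t(A)=t\bigl(\bigoplus_{\lambda\in\cF}A_\lambda(X_a)\bigr)=\bigoplus_{\lambda\in\cF}A_\lambda(X_{a^t})$, so $X_{a^t}$ is semisimple with eigenvalues in $\cF$.
\end{itemize}
For the fusion law, take $x'\in A_\alpha(X_{a^t})$ and $y'\in A_\beta(X_{a^t})$. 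Write $x'=t(x)$ and $y'=t(y)$ with $x\in A_\alpha(X_a)$ and $y\in A_\beta(X_a)$. Then
\[
x'*y'=t(x*y)\in t\bigl(A_{\alpha\circ\beta}(X_a)\bigr)=A_{\alpha\circ\beta}(X_{a^t}).
\]
Hence $a^t$ is a left (respectively right) $(\cF,\circ)$-axis whenever $a$ is.

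\emph{Step 3: the Miyamoto involutions correspond.} Because the fusion law is $\Z_2$-graded, Step 1 gives $t(A_{\cF^\pm}(X_a))=A_{\cF^\pm}(X_{a^t})$, so $t$ carries the grading $Gr(X_a)$ onto $Gr(X_{a^t})$. Take $v\in A_{\cF^\pm}(X_{a^t})$ and write $v=t(w)$ with $w\in A_{\cF^\pm}(X_a)$. Then
\[
t\,\tau(a)\,t^{-1}(v)=t\,\tau(a)(w)=\pm t(w)=\pm v=\tau(a^t)(v).
\]
So $\tau(a^t)=t\,\tau(a)\,t^{-1}$, which is $\tau(a)$ conjugated by $t$.

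The only delicate point is this last conjugation convention. With automorphisms acting on the left and $a^t:=t(a)$, the natural identity is $\tau(a^t)=t\tau(a)t^{-1}$. For this to read $\tau(a)^t=t^{-1}\tau(a)t$, automorphisms must compose on the right, i.e.\ act as exponents, which is consistent with the paper's notation for conjugation in $G$. Once the composition order is fixed, all three steps are routine verification.
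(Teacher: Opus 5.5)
Your proof is correct, and it is the standard transport-of-structure argument behind this lemma; the paper gives no proof of its own and only quotes the result from Hall--Rehren--Shpectorov. Because $t$ intertwines $X_a$ with $X_{a^t}$, it carries the eigenspaces, the fusion law and the $\Z_2$-grading of $a$ onto those of $a^t$; treating $X\in\{L,R\}$ uniformly covers the paper's non-commutative left/right axes directly, and your reconciliation of $\tau(a^t)=t\,\tau(a)\,t^{-1}$ (left action) with $\tau(a)^t=t^{-1}\tau(a)t$ (exponent notation, matching $a^b=b^{-1}ab$) is the right reading of the formula.
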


\begin{defi} Let $\Omega$ be a set of $(\cF,\circ)$-axes of algebra $A$. 
The group $Miy(A,\Omega)=\la \tau_a|a\in \Omega\ra$ is called a Miyamoto group of the set $\Omega$.
%	Группа $Miy(\Omega)=\la \tau_a|a\in \Omega\ra$ называется группой Миямото множества $\Omega$.
\end{defi}

	%\item We write $ A_{\lambda,\delta}(a) := A_\lambda(L_a) \cap A_\delta(R_a).$ We just write $A_{\lambda,\delta}$ when the idempotent $ a$ is understood. An element in $A_{\lambda,\delta}(a) $ will be called a $ (\lambda, \delta) $-eigenvector of $a $, and $ (\lambda, \delta) $ will be called its eigenvalue.
  
\begin{defi}	
	A bilinear form $(\cdot ,\cdot )$ on an algebra $A$ is called a left (right) Frobenius form if for any $a,b,c\in A$ it is true $(ab,c)=(b,ac)$ ($(ab,c)=(a,cb)$).
	%Билинейная форма $(\ ,\ )$ на алгебре $A$ называется левой(правой) формой фробениуса если для любых $a,b,c\in A$ верно $(ab,c)=(b,ac)$ ($(ab,c)=(a,cb)$).	
\end{defi}	

\section{2-generated $A_{\F}(G,H,\eta)$ algebra}
%In this paragraph, $\F$ is a field of a characteristic other than $2$.
Let $G=\langle a,b\rangle$ be the dihedral group of order $2p$, where $p$ is a prime, generated by involutions $a$ and $b$, $T=I(a,b)$, $\eta\in\F\setminus\{0,1\}$, $A=A_{\F}(G,T,\eta)$, $B=B(a,b)$.	
%Пусть $G=\langle a,b\rangle$ -- диэдральная группа порожденная инволюциями $a$ и $b$, порядка $2p$, где $p$ простое число, $T=a^G$, и $\eta_p\in\F$, $A=A(G,T,\eta_p)$, $B=B_G(a,b)$. 
%Основная цель данного параграфа доказать, что с каждый идемпотент из $T$ является $\mathcal{D}$ осью и $M(\{a,b\})$ изоморфную $G/Z(G)$.  

%Запишем матрицу $L_{a}$ в каноническом базисе $B_G(a,b)$.  
Let us write the matrix $L_{a}$ in the canonical basis $B(a,b)$.
	$$
	Mat(L_{a})=\begin{pmatrix}
		1    & 0    &\cdots&0    &0 \\
		\eta & \eta &\cdots& \eta& 1\\ 
		\eta & \eta &\cdots& 1& \eta\\
		\vdots&\vdots& \ddots & \vdots&\vdots\\
		\eta & 1&\cdots& \eta& \eta\\
	\end{pmatrix}
	$$
	
	%Найдем спектр матрицы $Mat(L_{a}(A))$. 
	We find the spectrum of the matrix $Mat(L_{a})$.
	\begin{lem}\label{sv}
%	Собственными числами матрицы $Mat(L_a)$ являются:
The eigenvalues of the matrix $Mat(L_a)$ are:
	\begin{enumerate}
		\item $1$ is an eigenvalue of multiplicity $1$;
		\item $\lambda_1 = \eta(p-2) + 1$ is an eigenvalue of multiplicity $1$;
		\item $\lambda_2 = 1-\eta$ is an eigenvalue of multiplicity $\frac{p-3}{2}$;
		\item $-\lambda_2=\eta-1$ is an eigenvalue of multiplicity $\frac{p-1}{2}$. 
	\end{enumerate}
		
	%$Spec(Mat(L_{a}(A)))=\{1, 1 + (p-2)\eta_p, 1 - \eta_p$ собствееное число кратности $(p-1)/2-1$, $-(1 - \eta_p)$ собственное число кратности $(p-1)/2\}$. 
	\end{lem}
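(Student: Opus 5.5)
The plan is to exploit the block structure of $L_a$ visible in the displayed matrix. Write $a=a_1$ and $W=L(a_2,\dots,a_p)$. Since $a*a=a$, the first column of $Mat(L_a)$ is $e_1$ and the first row is $(1,0,\dots,0)$. Hence $L_a$ is block lower triangular with respect to $L(a_1)\oplus W$: the top-left block is $(1)$, and the bottom-right block is the $(p-1)\times(p-1)$ matrix $M$ describing the $W$-components of $a*a_i$ for $i\ge 2$. The characteristic polynomial therefore factors as $(x-1)\chi_M(x)$, so it suffices to compute the spectrum of $M$ with multiplicities.

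First I would identify $M$. For $i\ge 2$ we have $a*a_i=a_i^a+\eta\sum_{x\in I(a,b)\setminus\{a_i^a\}}x$. Conjugation by $a$ permutes $I(a,b)$ and fixes $a$. Since $p$ is odd, $a$ commutes with no other reflection of the dihedral group of order $2p$, so conjugation by $a$ restricts to a fixed-point-free involution $\sigma$ of $\{a_2,\dots,a_p\}$. With the canonical numbering, $\sigma$ is $a_i\mapsto a_{p+2-i}$, which is the anti-diagonal pattern in the displayed matrix. Let $P$ be the permutation matrix of $\sigma$ on $W$ and $J$ the all-ones matrix. The $W$-part of $a*a_i$ then gives
$$M=\eta J+(1-\eta)P .$$

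Next I would diagonalize $J$ and $P$ simultaneously. This is possible because $P$ is a permutation matrix, so it commutes with $J$. On the all-ones vector $u$ we have $Ju=(p-1)u$ and $Pu=u$, so $M$ has eigenvalue $\eta(p-1)+1-\eta=\eta(p-2)+1=\lambda_1$. On the sum-zero subspace $J$ acts as $0$, so $M=(1-\eta)P$ there. Because $\sigma$ is a product of $\frac{p-1}{2}$ disjoint transpositions, $P$ is diagonalized by the vectors $a_i+a_{\sigma(i)}$ and $a_i-a_{\sigma(i)}$; this uses $\mathrm{char}\,\F\neq 2$. The vectors $a_i-a_{\sigma(i)}$ all lie in the sum-zero subspace and give eigenvalue $-(1-\eta)=\eta-1$ with multiplicity $\frac{p-1}{2}$. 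The symmetric vectors $a_i+a_{\sigma(i)}$ span a $\frac{p-1}{2}$-dimensional space containing $u$, and its intersection with the sum-zero subspace has dimension $\frac{p-3}{2}$, giving eigenvalue $1-\eta$. Together with the eigenvalue $1$ from the top-left block, the multiplicities add up to $p$.

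I do not expect a serious obstacle. The one point needing care is that the multiplicities are meant as algebraic multiplicities, i.e. as factors of the characteristic polynomial. Computing in the explicit basis $\{u\}\cup\{a_i-a_{\sigma(i)}\}\cup\{\text{sum-zero symmetric vectors}\}$ gives $\chi_M(x)=(x-\lambda_1)(x-1+\eta)^{\frac{p-3}{2}}(x-\eta+1)^{\frac{p-1}{2}}$. This holds in any characteristic other than $2$, as long as these vectors form a basis of $W$. When $p$ divides $\mathrm{char}\,\F$, $u$ itself lies in the sum-zero space, and that basis would have to be replaced. In that case I would verify the factorization directly from the block form of $M$ on each pair $\{a_i,a_{\sigma(i)}\}$, or simply compute $\det(xI-M)$ via the rank-one update by $\eta J$.
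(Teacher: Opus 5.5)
Your proof is correct. It reaches the result by a somewhat different route than the paper.

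\textbf{How the two routes differ.} The paper simply exhibits $p$ explicit eigenvectors of $L_a$ in $A$ (Lemma~\ref{basis2}: $a$, $z(a)=\frac{p-1}{p-2}a+\sum_{i\ge 2}a_i$, the $y_i(a)$ and the $x_i(a)$). It asserts these are linearly independent and reads the multiplicities off the count. You instead use the $L_a$-invariance of $L(a)$ to split off the factor $x-1$. You then identify the induced operator as $M=\eta J+(1-\eta)P$ and get its characteristic polynomial from the commuting pair $J,P$.

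Your eigenvectors of $M$ are exactly the paper's ones seen modulo $a$:
\begin{itemize}
\item the vectors $a_i-a_{\sigma(i)}$ are the $x_i(a)$;
\item the paper's $y_i(a)=(a_{i+1}+a_{\sigma(i+1)})-(a_{(p+1)/2}+a_{(p+3)/2})$ span your symmetric sum-zero space;
\item $u$ is the $W$-part of $z(a)$.
\end{itemize}

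\textbf{What each route buys.} Yours explains where the eigenvalues and multiplicities come from: $\frac{p-1}{2}$ counts the transpositions of $\sigma$, and $\frac{p-3}{2}$ is the symmetric part minus the $u$-direction. Through the rank-one determinant formula it also gives the factorization of the characteristic polynomial over any field. The paper's route gives an actual eigenbasis of $A$, i.e.\ semisimplicity of $L_a$, which is what is used immediately afterwards. To get that from your argument you would still have to lift eigenvectors to $A$. The sum-zero ones lift for free, since the $a$-coefficient of $a*w$ is $\eta\sum_i w_i$. The vector $u$ must be replaced by $u+\frac{p-1}{p-2}a$, which needs $\lambda_1\neq 1$.

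\textbf{Two small slips, neither of which breaks the argument.}
\begin{itemize}
\item You cannot have both the first column equal to $e_1$ and the first row equal to $(1,0,\dots,0)$. The $a$-coefficient of $a*a_i$ is $\eta$, so in either convention one of them is $(1,\eta,\dots,\eta)$. In the paper's display it is the first column. Either fact alone gives the block-triangular form you need.
\item The degenerate case is misidentified. The vector $u$ lies in the sum-zero space exactly when $\mathrm{char}\,\F$ divides $p-1$, not when $p$ divides $\mathrm{char}\,\F$. In that case $\lambda_1-\lambda_2=\eta(p-1)=0$, so the stated separate multiplicities cannot hold; the good-characteristic hypothesis rules this out. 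Your rank-one determinant computation covers it regardless.
\end{itemize}
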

\begin{proof}
%Утверждение леммы следует, из того, что матрица $Mat(L_a)$ имеет $p$ собственных векторов см. Лемму \ref{basis2} с собственными числами $1,\lambda_1,\lambda_2,-\lambda_2$ нужной кратности.
The claim follows from the existence of $p$ eigenvectors of the matrix $\mathrm{Mat}(L_a)$ (see Lemma~\ref{basis2}) corresponding to the eigenvalues $1$, $\lambda_1$, $\lambda_2$, and $-\lambda_2$ with the stated multiplicities.
\end{proof}

Let $\lm_1$ and $\lm_2$ be elements of $\F$ as in Lemma \ref{sv}.
%If the elements $\lm_1, \lm_2$ and $-\lm_2$ are different from $1$, then $a$ is a primitive idempotent. 
We now find the eigenspaces of the matrix $Mat(L_{a})$.
\begin{lem}\label{basis2}
	%Алгебра $A$ имеет базис составленный из собственных векторов матрицы $Mat(L_a)$. Запишем собственные вектора в базисе $B_G(a,b)$:
	The algebra $A$ has a basis of eigenvectors of the matrix $Mat(L_a)$. Let's write the eigenvectors in the basis $B(a,b)$:
	\begin{enumerate}
		\item $a = (1,0,\dots,0)$ is an eigenvector corresponding to the eigenvalue $1$;
		
		\item $z(a) = \left(\frac{p-1}{p-2}, 1, \dots, 1\right)$ is an eigenvector corresponding to the eigenvalue $\lambda_1$.
		
		\item  
		\begin{align*}
			y_1(a) &= (0, 1, 0, \dots, 0, -1, -1, 0, \dots, 0, 1), \\
			y_2(a) &= (0, 0, 1, 0, \dots,0, -1, -1, 0, \dots,0, 1, 0), \\
			&\vdots \\
			y_{\frac{p-3}{2}}(a) &= (0, \dots,0, 1, -1, -1, 1, 0, \dots, 0),
		\end{align*}
		are eigenvectors corresponding to the eigenvalue $\lambda_2$.
		
		\item \begin{align*}
			x_1(a) &= (0, -1,0, \dots,0, 1), \\
			x_2(a) &= (0, 0, -1,0, \dots,0, 1, 0), \\
			&\vdots \\
			x_{\frac{p-1}{2}}(a) &= (0, \dots, 0, -1, 1, 0, \dots, 0),
		\end{align*}
		are eigenvectors corresponding to the eigenvalue $-\lambda_2$.
	\end{enumerate}
\end{lem}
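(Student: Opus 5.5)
The plan is to verify directly that each listed vector is an eigenvector, and then that the $p$ vectors are linearly independent. The first step is to make the action of $a$ on the canonical basis $B(a,b)$ explicit. Since $a_{i+1}=a_{i-1}^{a_i}$ in the dihedral group of order $2p$, conjugation by $a=a_1$ is a reflection of the $p$-gon of involutions: it fixes $a_1$ and swaps $a_j\leftrightarrow a_{p+2-j}$ for $2\le j\le p$. This is exactly the anti-diagonal $1$'s in $Mat(L_a)$.

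Write $\sigma(j)=p+2-j$ and let $S=\sum_{i=1}^p a_i$ be the sum of all of $I(a,b)$. The defining rule then reads, for $j\ge 2$,
$$a*a_j=a_{\sigma(j)}+\eta\,(S-a_{\sigma(j)})=(1-\eta)a_{\sigma(j)}+\eta S .$$
Hence for $v=\sum_j c_j a_j$ we get the master formula
$$L_a(v)=c_1a+(1-\eta)\sum_{j\ge2}c_ja_{\sigma(j)}+\eta\Big(\sum_{j\ge2}c_j\Big)S .$$
All verifications reduce to this formula.

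For $x_k(a)=-a_{k+1}+a_{p+1-k}$, $1\le k\le \frac{p-1}{2}$: the coordinate sum vanishes and $\sigma$ swaps the two entries, so $L_a x_k=-(1-\eta)x_k$. For $y_k(a)$: the vector is $\sigma$-symmetric, has zero $a_1$-coordinate and zero coordinate sum. Hence $L_a y_k=(1-\eta)y_k$. For $z(a)$: the coordinates on $a_2,\dots,a_p$ are all $1$, which gives
\begin{align*}
L_a z(a)&=\tfrac{p-1}{p-2}\,a+(1-\eta)(S-a)+\eta(p-1)S\\
&=\Big(\tfrac{p-1}{p-2}+\eta(p-1)\Big)a+\big(1+\eta(p-2)\big)(S-a).
\end{align*}
One checks that $\tfrac{p-1}{p-2}\,(1+\eta(p-2))=\tfrac{p-1}{p-2}+\eta(p-1)$, so $L_a z(a)=\lambda_1 z(a)$. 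Finally, $L_a a=a$ trivially. The standing assumption that $p-2$ is invertible in $\F$ is implicit in the definition of $z(a)$.

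The remaining step, and the only slightly delicate one, is linear independence. I would split $L(B)=\F a_1\oplus V^+\oplus V^-$, where $V^{\pm}$ are the $\pm1$-eigenspaces of the permutation $\sigma$ acting on $L(a_2,\dots,a_p)$. The $x_k$ lie in $V^-$, which has dimension $\frac{p-1}{2}$. They are independent because their supports $\{a_{k+1},a_{p+1-k}\}$ are disjoint, so they span $V^-$.

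The vectors $y_k$ and $z(a)-\frac{p-1}{p-2}a=S-a$ lie in $V^+$, which has the basis $e_k=a_{k+1}+a_{p+1-k}$. In this basis each $y_k$ has the form $e_k-e_m$ for a suitable index $m\neq k$, namely the pair adjacent to the centre. The $y_k$ are therefore independent by a triangularity argument and span the sum-zero hyperplane of $V^+$. Since $S-a=\sum_k e_k$ has coordinate sum $\frac{p-1}{2}$, it lies outside that hyperplane provided $p-1\neq0$ in $\F$, which I would note as the needed characteristic hypothesis. Adding $a$, which is the only vector with a nonzero $a_1$-coordinate among the rest, gives $1+1+\frac{p-3}{2}+\frac{p-1}{2}=p$ independent eigenvectors, hence a basis. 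This also yields the multiplicities claimed in Lemma \ref{sv}.
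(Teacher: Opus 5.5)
Your proposal is correct and is essentially the paper's argument made explicit: the paper only asserts that the eigenvector equations and the linear independence are easy checks, and your formula for $L_a$ together with the splitting $\F a\oplus V^+\oplus V^-$ carries out exactly those checks. The conditions $p-2\neq 0$ and $p-1\neq 0$ in $\F$ that you flag are genuinely needed (if $\mathrm{char}\,\F$ divides $p-1$, then $z(a)=\sum_k y_k(a)$), and they follow from the good-characteristic hypothesis, since, given $\eta\neq 0$, $\lambda_1\neq 1$ and $\lambda_1\neq\lambda_2$ are equivalent to them.
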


\begin{proof}
	It is easy to verify that this set of vectors are indeed eigenvectors of the matrix $Mat(L_a)$. It is also straightforward to prove that this set of vectors is linearly independent, and therefore, it forms a basis for the algebra $A$.
%Легко проверить, что данное множество векторов действительно являются собственными векторами матрицы $Mat(L_a)$. Так же несложно доказывается, что данное множество векторов является линейно независимым, а значит, исходя из его порядка, является базисом алгебры $A$. 
\end{proof}

%Таким образом $a$ является полупростым идемпотентом алгебры $A$.
%Обозначим %$BE(L_a,\{a,b\})=\{a,z(a,b),y_1(a,b),...,y_{(p-1)/2-1}(a,b),x_1(a,b),...,x_{(p-1)/2}(a,b)\}$ базис из собственных векторов линейного оператора $L_a$. Обозначим через $A^+_a$ пространство натянутое на вектора $a,z,y_1(a,b),...,y_{(p-3)/2}(a,b)$, и $A^-_a$ пространство натянутое на $x_1(a,b),...,x_{(p-1)/2}(a,b)$. 
%Докажем, что $A^+_a$ является подалгеброй, произведение любых двух элементов из $A^-_a$ лежит в $A^+_a$, а произведение элемента из $A^+_a$ и элемента из $A^-_a$ лежит в $A^-_a$.  

%Запишем таблицу умножения алгебры $A$ в базисе $BE(L_a,\{a,b\})$.
%Множество $BE(L_a,\{a,b\})$ является множеством собственных векторов относительно умножения на $a$ слева, поэтому умножения на $a$ слева уже известны.
%Вычислим произведения между векторами базиса подпространства $A_a^+$.

Thus, $L_a$ is a semisimple linear operator of the algebra $A$ and therefore $a$ is a left semisimple element.

Let $BE(L_a,{a,b}) = \{a, z(a), y_1(a),\dots,y_{(p-3)/2}(a), x_1(a),\dots,x_{(p-1)/2}(a)\}$ be the basis of $A$ consisting of eigenvectors of the linear operator $L_a$. 
\begin{defi}
For an element $a\in T$ we define 
   
$$A^+_a=\{a(a), z(a), y_1(a),\dots,y_{(p-3)/2}(a)\}$$ and $$A^-_a=\{x_1(a),\dots,x_{(p-1)/2}(a)\}$$    
\end{defi}

We prove that $A^+_a$ is a subalgebra, the product of any two elements from $A^-_a$ lies in $A^+_a$, and the product of an element from $A^+_a$ with an element from $A^-_a$ lies in $A^-_a$.

Let us write the multiplication table of $A$ in the basis $BE(L_a,{a,b})$. The set $BE(L_a,{a,b})$ consists of eigenvectors with respect to left multiplication by $a$, so the results of left multiplication by $a$ are already known.

We will now compute the products between the vectors of the basis of the subspace $A_a^+$.

\begin{lem}\label{bemgam}
	Let $\gam=\sum_{a_j\in B(a,b)}a_j$. The following statements hold. 
	\begin{enumerate}
	\item $a_i*\gam=\gam*a_i=(\eta(p-2)+1)\gam+\eta a_i$;
	\item $\gam*\gam=r\gam$, where $r=\eta(p-1)^2+p$. 
\end{enumerate}
\end{lem}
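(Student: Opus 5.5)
The plan is a direct computation from the definition of the product. The facts used are that $T=I(a,b)=B(a,b)$ consists of all $p$ involutions of the dihedral group, so that for $j\neq i$ we have $I(a_i,a_j)=B(a,b)$, and that $|a_ia_j|=p$.

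For the left product I expand $a_i*\gam=\sum_j a_i*a_j$. The term $j=i$ contributes $a_i$. For $j\neq i$ the definition gives
$$a_i*a_j=a_j^{a_i}+\eta\bigl(\gam-a_j^{a_i}\bigr).$$
Conjugation by $a_i$ permutes $B(a,b)$ and fixes $a_i$, so as $j$ runs over the indices $j\neq i$, the elements $a_j^{a_i}$ run bijectively over $B(a,b)\setminus\{a_i\}$. Summing the $p-1$ terms therefore gives
$$(\gam-a_i)+\eta\bigl((p-1)\gam-(\gam-a_i)\bigr)=(\gam-a_i)+\eta(p-2)\gam+\eta a_i .$$
Adding the contribution $a_i$ yields $(\eta(p-2)+1)\gam+\eta a_i$.

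For the right product I expand $\gam*a_i=\sum_j a_j*a_i$, where for $j\ne i$
$$a_j*a_i=a_i^{a_j}+\eta\bigl(\gam-a_i^{a_j}\bigr).$$
The same computation works provided that $j\mapsto a_i^{a_j}$ is a bijection from $\{j\neq i\}$ onto $B(a,b)\setminus\{a_i\}$; this is the one point needing an argument, and I expect it to be the main obstacle. First, $a_i^{a_j}\neq a_i$, because otherwise $a_j$ and $a_i$ would commute and $|a_ia_j|=2$, contradicting that $p$ is odd. Second, suppose $a_i^{a_j}=a_i^{a_k}$. Then $a_ja_k$ centralizes $a_i$. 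Since $p$ is odd, $C_G(a_i)=\{1,a_i\}$, while $a_ja_k$ is a rotation, so $a_ja_k=1$ and hence $j=k$. Thus the map is injective, and a count of sizes ($p-1$ on both sides) gives the bijection, so the right product equals the left one.

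Finally, for the second statement I sum the first identity over all $i$:
$$\gam*\gam=\sum_i a_i*\gam=p(\eta(p-2)+1)\gam+\eta\gam=\bigl(\eta(p^2-2p+1)+p\bigr)\gam=\bigl(\eta(p-1)^2+p\bigr)\gam .$$
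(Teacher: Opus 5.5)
Your proof is correct and is essentially the paper's computation: part 2 and the right product $\gam*a_i$ are handled exactly as in the paper. Your argument via $C_G(a_i)=\{1,a_i\}$ supplies the step the paper uses tacitly, namely that $a_i^{a_j}$ ($j\neq i$) runs bijectively over $B(a,b)\setminus\{a_i\}$. The only real difference is that the paper obtains $a_i*\gam$ from the eigenvector $z(a_i)=\gam+\frac{1}{p-2}a_i$ of Lemma~\ref{basis2}, whereas you expand the product directly; this avoids dividing by $p-2$, so your version also holds when $p-2=0$ in $\F$.
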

\begin{proof}
	From Lemma \ref{basis2} it follows that $\gam+\frac{1}{p-2}a_i$ is an eigenvector of the operator $L_{a_i}$ with eigenvalue $\eta(p-2)+1$. Thus, 
    $$a_i*\gam=(\eta(p-2)+1)\Big(\gam+\frac{1}{p-2}a_i\Big)-\frac{1}{p-2}a_i=(\eta(p-2)+1)\gam+\eta a_i.$$
    
	Since $\gam$ does not change under permutation of the elements of the basis $B(a,b)$, it suffices to verify that $\gam*a_1=a_1*\gam$. We have $$\gam*a_1= a_1+\eta(\gam-a_1^{a_2})+a_1^{a_2}+...+\eta(\gam-a_1^{a_p}) +a_1^{a_p}= $$
	$$=(\eta(p-2)+1)\gam+\eta a_1.$$
	2. Based on point 1, we have $$\gam*\gam=p(\eta(p-2)+1)\gam+\eta a_1+...+\eta a_p=(p(\eta(p-2)+1)+\eta)\gam=(\eta(p-1)^2+p)\gam.$$
	%1. Из Леммы \ref{basis2} следует, что $\gam+\frac{1}{p-2}a_i$ является собственным вектором для оператора $L_{a_i}$ с собственным числом $\eta(p-2)+1$. Таким образом $$a_i*\gam=(\eta(p-2)+1)(\gam+\frac{1}{p-2}a_i)-\frac{1}{p-2}a_i=(\eta(p-2)+1)\gam+\eta a_i.$$
	%Поскольку $\gam$ не изменится при перестановки местами элементов базиса $B(a,b)$, то достаточно проверить, что $\gam*a_1=a_1*\gam$. Имеем $$\gam*a_1= a_1+\eta(\gam-a_1^{a_2})+a_1^{a_2}+...+\eta(\gam-a_1^{a_p}) +a_1^{a_p}= $$
	%$$=(\eta(p-2)+1)\gam+\eta a_1.$$ 
	%2. Исходя из пункта 1. имеем $$\gam*\gam=p(\eta(p-2)+1)\gam+\eta a_1+...\eta a_p=(p(\eta(p-2)+1)+\eta)\gam.$$  
\end{proof}
\begin{rem}
Note that for $\eta=-\frac{1}{p-2}$ the element $(2-p)\gam$ is the unit of the algebra $A$.
%	Заметим, что при $\eta=-\frac{1}{p-2}$ элемент $(2-p)\gam$ является единицей алгебры $A$.
\end{rem}

\begin{lem}\label{prod1}
Products of basis elements of the space $A_a^+$:
%Произведения базисных элементов пространства $A_a^+$.
\begin{enumerate}
\item let $p>5$, If $i\leq \frac{p-3}{4}$ then $y_i(a)*a=(1-\eta)(y_{2i}(a)-y_1(a))$ else $y_i(a)*a=(1-\eta)(y_{p-3-2i}(a)-y_1(a))$;

\item let $p=5$, we have $y_1(a)*a=(\eta-1)y_1(a)$;

\item $z(a)*y_i(a)= \frac{\eta(p-3)+1}{p-2}y_i(a)$;

\item $y_i(a)*z(a)=y_i(a)+y_i(a)*a\in L(y_1,...,y_{(p-3)/2})$;

\item $z(a)*z(a)=\frac{1+2\eta(p-2)}{(p-2)^2}a+(\eta(p-1)^2+p)\gam\in L(a,z)$; 

\item $z(a)*a=a*z(a)=(1+(p-2)\eta)z(a)$;

\item $y_i(a)*y_i(a), y_i(a)*y_j(a)\in A_a^+$;

\item if $p=5$, then $y_1(a)*y_1(a)=(1-4\eta)z(a)-\frac{16}{3}(1-\eta)a.$ 
\end{enumerate}
\end{lem}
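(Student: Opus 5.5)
We will verify the listed products by direct computation in the canonical basis $B(a,b)$. The organizing device is a reindexing: write $a_k=a_{\sigma(k)}$ via the action of the dihedral group on $I(a,b)$, identifying $I(a,b)$ with $\Z_p$ so that conjugation by $a$ is the reflection $k\mapsto -k$ and conjugation by any $a_j$ is a reflection $k\mapsto 2j-k$. Concretely, I would relabel $a=c_0$ and $c_k, c_{-k}$ as the pairs swapped by $a$, so that $x_k(a)=c_k-c_{-k}$ (up to sign) and $y_k(a)=(c_{k}+c_{-k})-(c_{m}+c_{-m})$ for a suitable partner index. The product rule then becomes $c_i*c_j=c_{2i-j}+\eta(\gam-c_i-c_{2i-j})$ for $i\neq j$, where $\gam$ is as in Lemma \ref{bemgam}. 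In particular $c_i*c_j=(1-\eta)c_{2i-j}+\eta\gam-\eta c_i$.

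The items then reduce to linear bookkeeping. For item 6, note that $z(a)=\gam+\frac{1}{p-2}a$ up to scaling. Lemma \ref{bemgam}(1) gives both $a*\gam$ and $\gam*a$, and the result follows.

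For item 3, write $z(a)*y_i(a)$ using $\gam*c_k=(\eta(p-2)+1)\gam+\eta c_k$. Since $y_i$ has coordinate sum zero, the $\gam$-terms cancel, leaving $\eta y_i$. Adding the $\frac{1}{p-2}a*y_i=\frac{1-\eta}{p-2}y_i$ contribution from Lemma \ref{basis2} and normalizing gives the stated coefficient $\frac{\eta(p-3)+1}{p-2}$.

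For item 4, the same cancellation on the right gives $y_i*\gam=\ldots$, again using coordinate sum zero. That identity for $\gam*c_k$ only concerns $\gam$ on the left, however, so here I would instead use $c_k*\gam=(\eta(p-2)+1)\gam+\eta c_k$. This gives $y_i*z=\eta y_i+\frac{1}{p-2}y_i*a$, modulo normalization of $z$. Combined with item 1 or 2, this lies in $L(y_1,\dots)$.

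For item 5, expand $z*z$ using Lemma \ref{bemgam}(2).

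For items 1 and 2 (the right multiplication $y_i*a$), we use $c_k*c_0=(1-\eta)c_{2k}+\eta\gam-\eta c_k$. Summing over the four terms of $y_i$ with signs $+,+,-,-$ kills the $\gam$ terms. The $-\eta c_k$ terms give $-\eta y_i$, and the $(1-\eta)c_{2k}$ terms give $(1-\eta)$ times a "doubled" vector. One must then re-express $-\eta y_i+(1-\eta)(\text{doubled})$ in the $y$-basis. This is where the index folding $2k \bmod p$, reduced to the range $1,\dots,\frac{p-1}{2}$ via $k\sim -k$, produces the case split $i\le\frac{p-3}{4}$ versus otherwise. For $p=5$ there is only $y_1$, and the computation collapses to $(\eta-1)y_1$. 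I expect the main obstacle to be exactly this step: pinning down the precise indexing convention of the $y_i$ in Lemma \ref{basis2} so that the folding lands on $y_{2i}-y_1$ or $y_{p-3-2i}-y_1$. I would fix it by first computing $p=7$ explicitly and then matching the general pattern.

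For item 7, it suffices to show that $y_i*y_j$ lies in $A_a^+$, i.e. is fixed by the involution $c_k\mapsto c_{-k}$. This is a symmetry argument. The map $\phi:c_k\mapsto c_{-k}$ is conjugation by $a$, which preserves the multiplication rule because the product is defined group-theoretically, so $\phi$ is an algebra automorphism. The $y$'s are $\phi$-fixed and the $x$'s are negated. Since $A_a^+=\operatorname{Fix}\phi$ by dimension count, $\phi(y_i*y_j)=y_i*y_j$ gives $y_i*y_j\in A_a^+$.

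Item 8 is a direct expansion for $p=5$. Here $y_1=c_1+c_{-1}-c_2-c_{-2}$, and we expand the sixteen products with the rule above and collect terms in $a$ and $\gam$.
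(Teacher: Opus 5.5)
\textbf{The multiplication rule is wrong, and this breaks items 1, 2 and 8.} In $a*b=b^a+\delta(b^a)$, the sum $\delta(b^a)=\eta\sum_{x\in I(a,b)\setminus\{b^a\}}x$ excludes only $b^a$; the left factor $a$ is included. So the correct rule is $c_i*c_j=(1-\eta)c_{2i-j}+\eta\gam$ for $i\neq j$, with no $-\eta c_i$ term. You can see this in two places:
\begin{itemize}
\item The first column of $Mat(L_a)$ says the $a$-coordinate of $a*a_j$ is $\eta$.
\item Lemma \ref{bemgam} depends on it. Under your rule $c_k*\gam=(1+(p-2)\eta)\gam-(p-2)\eta c_k$, and the $\lambda_1$-eigenvector of $L_a$ would be $\gam-a$ rather than $z(a)$.
\end{itemize}
As a result, you compute items 3--6 (which quote Lemma \ref{bemgam}) and items 1, 2, 8 with two incompatible products.

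The discrepancy happens to vanish for $a*y_i$ and $a*x_i$. It does not vanish when the left factor varies, as in $y_i*a$ and $y_1*y_1$. The $-\eta y_i$ you find in $y_i*a$ is an artifact of the wrong rule. It cannot be removed by rewriting in the $y$-basis, since the target is a pure multiple of $1-\eta$. Concretely:
\begin{itemize}
\item For $p=5$ your rule gives $y_1*a=(2\eta-1)y_1$, not the $(\eta-1)y_1$ you say it collapses to.
\item In item 8 it adds $\eta(\gam-a)$. You would get $(1-3\eta)\gam-(5-3\eta)a$ instead of $(1-4\eta)\gam-(5-4\eta)a=(1-4\eta)z(a)-\frac{16}{3}(1-\eta)a$.
\end{itemize}

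\textbf{With the correct rule, your $\Z_p$ labeling works and reproduces the paper's computation.} Put $u_k=c_k+c_{-k}$, so $y_i=u_i-u_{(p-1)/2}$. The $\eta\gam$ terms cancel because the signs sum to zero. Since $2\cdot\frac{p-1}{2}\equiv-1$, you get $y_i*a=(1-\eta)(u_{2i}-u_1)$. This equals:
\begin{itemize}
\item $(1-\eta)(y_{2i}-y_1)$ for $2i\le\frac{p-1}{2}$, reading $y_{(p-1)/2}$ as $0$; this gives $(\eta-1)y_1$ for $p=5$;
\item $(1-\eta)(y_{p-2i}-y_1)$ for $2i>\frac{p-1}{2}$.
\end{itemize}
So do not try to force the subscript $p-3-2i$ in the statement. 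For example, $p=11$, $i=4$ gives $(1-\eta)(y_3-y_1)$.

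A few notes on the remaining items:
\begin{itemize}
\item Item 4: your result $y_i*z=\eta y_i+\frac{1}{p-2}y_i*a$ is correct. $z(a)$ is fixed explicitly, so there is no normalization freedom that turns this into the literal $y_i+y_i*a$.
\item Item 5: $\frac{1}{p-2}(a*\gam+\gam*a)$ also contributes $\frac{2(\eta(p-2)+1)}{p-2}\gam$, which the stated formula omits.
\end{itemize}
What actually holds in items 4 and 5, and what is used later, is membership in $L(y_1,\dots,y_{(p-3)/2})$ and $L(a,z)$. Your symmetry argument for item 7 is sound: conjugation by $a$ is an automorphism, and $A_a^+$ is its fixed space by a dimension count. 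It is cleaner than the paper's ``direct calculations''.
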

\begin{proof}
	
1. We have $$ y_i(a)*a=(a_{i+1}-a_{\frac{p+1}{2}}-a_{\frac{p+3}{2}}+a_{p+1-i})*a=$$

$$=a_{2i+1}+\delta(a_{2i+1})-a_p-\delta(a_p)-a_2-\delta(a_2)+a_{p+1-2i}+\delta(a_{p+1-2i})= $$
$$=(1-\eta_p)(a_{2i+1}-a_p-a_2+a_{p+1-2i})=(1-\eta_p)(y_{2i}(a)-y_1(a))$$
2. It is proved similarly as the case 1.\\
3. Let $\gam$ be as in Lemma \ref{bemgam}. Note that $z(a)=\frac{1}{p-2}a+\gam$. From Lemma \ref{bemgam} we have $\gam*a_l=(\eta(p-2)+1)\gam+\eta a_l$. Hence $\gam*y_i(a)=\eta y_i(a)$.Thus $$(\frac{1}{p-2}a+\gam)*y_i(a)=\frac{\eta(p-3)+1}{p-2}y_i(a)$$
4. We have $$ y_i(a)*(\gam +\frac{1}{p-2}a)=y_i(a)+y_i(a)*a$$ \\
5. From Lemma \ref{bemgam} we have $\gam*\gam=(\eta(p-1)^2+p) \gam$. Therefore $$z(a)*z(a)=(\frac{1}{p-2}a+\gam)(\frac{1}{p-2}a+\gam)=\frac{1}{(p-2)^2}a+\frac{1}{p-2}(\gam*a+a*\gam)+\gam*\gam=$$
$$=\frac{1+2\eta(p-2)}{(p-2)^2}a+(\eta(p-1)^2+p)\gam.$$
\\
6. We have $$z(a)*a=(\frac{1}{p-2}a+\gam)*a=(\frac{1}{p-2}+\eta)a+(\eta(p-2)+1)\gam=$$
$$=(\frac{1-(\eta(p-2)+1)}{p-2}+\eta)a+ (\eta(p-2)+1)z(a)=(\eta(p-2)+1)z(a)$$\\
7. and 8. Direct calculations.
\end{proof}
\begin{rem}\label{Raeigenvector}
 It follows from Lemma \ref{prod1} that $z(a)$ is an eigenvector of $R_a$ with eigenvalue $1+(p-2)\eta$,  $y_i(a)$ are eigenvectors with eigenvalues $-\lm_2$. Thus $R_a$ is semisimple on $A^{+}(a)$.
 
 %  Из леммы \ref{pro1} следует, что $z(a)$ собственный вектор $R_a$ с собственным значением $1+(p-2)\eta$, Элементы $y_i(a)$ являются собственными векторами с собственными значениями $\eta-1$. Таким образом $R_a$ полупростой на $A^{+}(a)$.
\end{rem}

\begin{lem}\label{aplus}
	The subspace $A_a^+$ is a subalgebra of $A$. 
\end{lem}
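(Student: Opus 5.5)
Rather than multiplying out the basis vectors $a,z(a),y_i(a)$ case by case (Lemma \ref{prod1} already covers part of this), I plan to identify $A_a^+$ as the fixed-point space of an algebra automorphism of $A$. The fixed points of any automorphism form a subalgebra, so this gives the claim directly.

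\emph{Step 1 (conjugation is an automorphism).} Let $\varphi:A\to A$ be the linear map with $\varphi(x)=x^a$ for $x\in T$. Since conjugation by $a$ is a group automorphism of $G$ preserving $T$, it permutes $T$, so $\varphi$ is a linear bijection. Take $x,y\in T$ with $x\neq y$. Then
\[
(y^x)^a=(y^a)^{x^a},\qquad I(x,y)^a=I(x^a,y^a),\qquad |x^ay^a|=|xy|.
\]
From the defining formula of $x*y$ (with $\delta$ as in the definition of $A_{\F}(G,T,\eta)$) it follows that $\varphi(x*y)=\varphi(x)*\varphi(y)$. The case $x=y$ is trivial. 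So $\varphi$ is an automorphism, and $\operatorname{Fix}(\varphi)=\{v\in A:\varphi(v)=v\}$ is a subalgebra.

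\emph{Step 2 (how $\varphi$ acts on the canonical basis).} I will show that $\varphi$ acts on $B(a,b)$ as the reflection fixing $a_1=a$. More precisely, $\varphi(a_1)=a_1$ and $\varphi$ swaps $a_{i+1}$ and $a_{p+1-i}$ for $1\le i\le (p-1)/2$. To do this I will write the canonical basis explicitly in the dihedral group. Put $c=ab$. From the recursion $a_{i+1}=a_{i-1}^{a_i}$ one gets by induction that each $a_j$ has the form $a\,c^{k_j}$, where $k_j$ runs through the residues mod $p$ in the order $0,1,-1,2,-2,\dots$ or a similar pattern. Conjugation by $a$ sends $ac^k$ to $ac^{-k}$. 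I then need this to match the index pairing $i+1\leftrightarrow p+1-i$. That pairing is exactly the one built into the vectors $x_i(a)$ and $y_i(a)$ of Lemma \ref{basis2}, and it is forced by the form of $\mathrm{Mat}(L_a)$: row $j$ has its $1$ in column $p+2-j$, so $a_j^a=a_{p+2-j}$. Since $L_a(a_j)=a_j^a+\eta(\cdots)$, I can simply read $\varphi$ off that matrix, which avoids the index bookkeeping.

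\emph{Step 3 (dimension count).} By Step 2, $a$ is fixed, and $z(a)$, whose coordinates are constant on $a_2,\dots,a_p$, is fixed. Each $y_i(a)$ is fixed: its entries $+1$ sit at the paired positions $i+1$ and $p+1-i$, and its entries $-1$ sit at the positions $\tfrac{p+1}{2}$ and $\tfrac{p+3}{2}$, which also form a pair. Each $x_i(a)$ is sent to $-x_i(a)$. Hence $A_a^+\subseteq\operatorname{Fix}(\varphi)$ and $A_a^-\subseteq\{v:\varphi(v)=-v\}$. Since $A=A_a^+\oplus A_a^-$ by Lemma \ref{basis2} and the $\pm1$-eigenspaces of $\varphi$ intersect trivially (characteristic $\neq2$), both inclusions are equalities. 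Thus $A_a^+=\operatorname{Fix}(\varphi)$ is a subalgebra. As a by-product, $A_a^-$ is the $(-1)$-eigenspace, which gives the remaining grading statements $A_a^-*A_a^-\subseteq A_a^+$ and $A_a^\pm*A_a^-\subseteq A_a^-$.

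The main obstacle is Step 2, namely confirming that the canonical numbering makes conjugation by $a$ the reflection $j\mapsto p+2-j$ on indices. I will settle it by reading $a_j^a$ directly off $\mathrm{Mat}(L_a)$ rather than unwinding the recursion.
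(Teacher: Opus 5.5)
Your proof is correct, but it takes a different route from the paper.

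\textbf{The paper's route.} The paper proves Lemma~\ref{aplus} by brute force. Lemma~\ref{prod1} tabulates the products among $a$, $z(a)$ and the $y_i(a)$; several entries, such as $y_i*y_j$, are dismissed as ``direct calculations''. The paper then observes that these products stay in the span.

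\textbf{Your route.} You show that the linear extension $\varphi$ of $x\mapsto x^a$ is an algebra automorphism; this is exactly the paper's later Lemma~\ref{aaut}. You then identify $A_a^+$ with its fixed space, using $A=A_a^+\oplus A_a^-$ from Lemma~\ref{basis2} and $\operatorname{char}\F\neq 2$. Note that this spanning claim is where good characteristic is really used: for example, $\frac{p-1}{2}\neq 0$ in $\F$ is needed for $a,z(a),y_i(a)$ to span the fixed space.

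\textbf{What each buys.} Your approach is shorter and avoids all index bookkeeping in the products. It also gives, in one stroke, the full $\Z_2$-grading of Corollary~\ref{twogenaxial} and the identification $\tau_a=\varphi$ of Lemma~\ref{Mi2}. It is essentially the argument the paper itself adopts in Section~4 and in Lemmas~\ref{um}--\ref{treis}. The computational route buys finer information. The explicit products determine which individual eigenspaces ($1$, $\lambda_1$, $\lambda_2$, $-\lambda_2$) a product lands in, and how $R_a$ acts. That is what the fusion-law statements in Propositions~1--3 need, whereas the automorphism only sees the coarse $\pm$ splitting.

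\textbf{A correction in Step 2.} The recursion $a_{i+1}=a_i a_{i-1}a_i$ gives $a_j=a(ab)^{j-1}$, so $k_j=j-1$, not $0,1,-1,2,-2,\dots$. Combined with $a\,(ac^k)\,a=ac^{-k}$, this yields $a_j^a=a_{p+2-j}$ directly. That agrees with what you read off $\mathrm{Mat}(L_a)$, so the rest of your argument is unaffected.
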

\begin{proof}
	By definition, $Bas(A_a^+)=\{a, z, y_1,..., y_{(p-3)/2}\}$ is a basis of $A_a^+$. From Lemma \ref{prod1} it follows that the product of any two vectors of $Bas(A_a^+)$ is linearly combination of vectors of $Bas(A_a^+)$. Thus, $\lla Bas(A_a^+)\rra=A_a^+$.
%По определению $Bas(A_a^+)=\{a,z,\alpha_1,...,\alpha_{(p-3)/2}\}$ является базисом пространства $A_a^+$. Из леммы \ref{prod1} следует, что произведение любых двух векторов из $Bas(A_a^+)$ линейно выражается через вектора из $Bas(A_a^+)$. Таким образом $\lla Bas(A_a^+)\rra=A_a^+$. 
\end{proof}

%Докажем, что $A_a^-$ выдерживает умножение на элементы из $A_a^+$.
We show that $A_a^-$ is invariant under multiplication by elements of $A_a^+$.

\begin{lem}\label{prod2}
	
\begin{enumerate}
\item $x_i*a=(-1)^k(1-\eta)x_{l}$, where $k$ is the integer part of $2i$ divided by $\frac{p-1}{2}$ and $l$ is the remainder of $2i$ divided by $\frac{p-1}{2}$;  

\item $z*x_i=(\frac{\eta p-\eta-1}{p-2})x_i$;

\item If $i\leq\frac{p-1}{4}$, then $x_i*z= \eta x_i+ \frac{\eta-1}{p-2}x_{2i}$, else $x_i*z= \eta x_i- \frac{\eta-1}{p-2}x_{p-2i}$;

\item $y_j*x_i\in A_a^{-}$;

\item $x_i*y_j\in A_a^{-}$.
\end{enumerate}
\end{lem}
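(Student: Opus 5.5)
The plan is to compute directly in the canonical basis $B(a,b)=\{a_1,\dots,a_p\}$, using the dihedral structure. Index the involutions by residues modulo $p$, so that $a_{k}$ corresponds to the reflection $s_k$ with $s_js_k$ a rotation by $k-j$. In this indexing $a_j^{a_i}=a_{2i-j}$ (indices mod $p$, with $a=a_1$ sitting at the fixed point). The multiplication rule for $j\ne i$ then reads
\[
a_i*a_j=a_{2i-j}+\eta(\gamma-a_{2i-j})=(1-\eta)a_{2i-j}+\eta\gamma ,
\]
and $a_i*a_i=a_i$. The first step is to re-express the eigenvectors $x_i$ of Lemma~\ref{basis2} in this form. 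Each $x_i$ is $a_{1+m}-a_{1-m}$ for a suitable $m=m(i)$: the vector is antisymmetric under the reflection $a$ fixing $a_1$. Similarly, $y_j=(a_{1+m}+a_{1-m})-(a_{1+m'}+a_{1-m'})$ for the fixed pair $m'$ corresponding to $a_2,a_p$ (after relabelling), and $z=\gamma+\frac{1}{p-2}a$.

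For items 1--3, I use the key observation that for $x=a_u-a_v$ with $u,v\neq w$, the $\eta\gamma$ terms cancel in $a_w*x$. Hence right multiplication by $a=a_1$ gives
\[
x*a=(a_{1+m}-a_{1-m})*a_1=(1-\eta)(a_{1+2m}-a_{1-2m}),
\]
which is again $\pm(1-\eta)x_{l}$. The sign and index come from reducing $2m$ into the range $1,\dots,\frac{p-1}{2}$: if $2m$ exceeds $\frac{p-1}{2}$, replace $2m$ by $p-2m$, which swaps the two terms and produces the sign. Matching this to the paper's enumeration ($k$, $l$ as integer part and remainder) is pure bookkeeping.

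Item 2 follows as in Lemma~\ref{prod1}(3): $\gamma*a_l=(\eta(p-2)+1)\gamma+\eta a_l$, so $\gamma*x_i=\eta x_i$. Also $a*x_i=-(1-\eta)x_i$ by Lemma~\ref{basis2}, giving $z*x_i=\bigl(\eta+\frac{\eta-1}{p-2}\bigr)x_i=\frac{\eta p-\eta-1}{p-2}x_i$.

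Item 3 combines $x_i*\gamma=\eta x_i$ with item 1. Indeed, $x*\gamma=\sum_l x*a_l$, and by Lemma~\ref{bemgam} $a_l*\gamma=\gamma*a_l$, which yields $\eta x_i$. Adding $\frac{1}{p-2}x_i*a$ then gives the stated formula.

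For items 4 and 5, rather than computing every product, I would use symmetry. The reflection $a$ (conjugation by $a_1$) induces the permutation $a_j\mapsto a_{2-j}$ of $B(a,b)$. Since the multiplication is defined by conjugation in $G$, this permutation is an automorphism $\sigma$ of $A$. Moreover $A_a^+$ is exactly its $+1$-eigenspace, spanned by $a$, $\gamma$ and the symmetric pairs $a_{1+m}+a_{1-m}$; its dimension $\frac{p+1}{2}$ matches the count $1+1+\frac{p-3}{2}$. Likewise $A_a^-$ is exactly the $-1$-eigenspace, spanned by the antisymmetric $x_i$. An automorphism of order two gives a $\mathbb Z_2$-grading, so $y_j*x_i$ and $x_i*y_j$ lie in the $-1$-eigenspace $A_a^-$. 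The same argument also re-proves items 2 and 3 up to the coefficients.

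The main obstacle is verifying that the span of $a,z,y_1,\dots,y_{(p-3)/2}$ equals the full $\sigma$-fixed space. Concretely, one must check that the $y_j$ together with $a$ and $\gamma$ span all symmetric pairs $a_{1+m}+a_{1-m}$. This holds because the $y_j$ are differences of symmetric pairs against one fixed pair, and $\gamma$ supplies the sum of all pairs. The check needs $p-2$ invertible (implicitly assumed by the paper's use of $\frac{1}{p-2}$) and the good-characteristic hypothesis. The remaining sign and index bookkeeping in item 1 is routine but error-prone.
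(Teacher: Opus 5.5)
Your computations for items 1--3 are correct, but the step where you say they reproduce items 1 and 3 (``pure bookkeeping'', ``gives the stated formula'') fails. Those two items are false as printed.

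With $x_i=a_{1-i}-a_{1+i}$ (indices mod $p$), your own formula gives $x_i*a=(1-\eta)(a_{1-2i}-a_{1+2i})$, that is
\[
x_i*a=(1-\eta)\,x_{2i}\ \ \text{if } 2i\le \frac{p-1}{2},\qquad x_i*a=-(1-\eta)\,x_{p-2i}\ \ \text{otherwise}.
\]
This is not what the quotient/remainder recipe produces:
\begin{itemize}
\item For $p=5$, $i=1$ the recipe gives $k=1$, $l=0$ and the meaningless $-(1-\eta)x_0$. In fact $(a_5-a_2)*a_1=(1-\eta)(a_4-a_3)=(1-\eta)x_2$.
\item For $p=7$, $i=2$ the recipe gives $-(1-\eta)x_1$. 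In fact $x_2*a=(1-\eta)(a_4-a_5)=-(1-\eta)x_3$.
\end{itemize}

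Similarly, $x_i*\gamma=\eta x_i$ together with the correct item 1 gives $x_i*z=\eta x_i+\frac{1-\eta}{p-2}x_{2i}$ for $i\le\frac{p-1}{4}$, and $x_i*z=\eta x_i-\frac{1-\eta}{p-2}x_{p-2i}$ otherwise. This is the opposite sign to item 3. For $p=5$, for example, $x_1*z=\eta x_1+\frac{1-\eta}{3}x_2$. For $p\ge 7$ the printed items 1 and 3 already contradict each other at $i=1$, where $k=0$.

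The paper's proof only says these are easy calculations, so it does not resolve this either. What your argument actually proves is the corrected formulas above. You should state and prove those, rather than assert agreement with items 1 and 3 as given.

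Items 2, 4 and 5 are fine. For 4 and 5 your route differs from the paper's bare ``direct calculation''. You check directly that $\sigma\colon a_j\mapsto a_{2-j}$ is an automorphism, which is clear from $a_i*a_j=(1-\eta)a_{2i-j}+\eta\gamma$ and $\sigma(\gamma)=\gamma$. You then identify $A_a^{\pm}$ with the $\pm1$-eigenspaces of $\sigma$.

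This buys you more than the paper's route:
\begin{itemize}
\item It yields the whole $\Z_2$-grading of Corollary~\ref{twogenaxial} at once.
\item It runs in the opposite direction to the paper, which derives $\tau_c(d)=d^c$ (Lemma~\ref{Mi2}) from the grading rather than the grading from the automorphism.
\end{itemize}

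Your ``main obstacle'' is immediate. The vectors $a,z,y_j$ are $\sigma$-fixed and linearly independent by Lemma~\ref{basis2}, and the fixed space has dimension $\frac{p+1}{2}$. That linear independence itself needs $p-1\neq0$ and $p-2\neq0$ in $\F$. Good characteristic supplies both: $\lambda_1\neq1$ forces $\eta(p-2)\neq0$, and $\lambda_1\neq\lambda_2$ forces $\eta(p-1)\neq0$.

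One harmless slip: the pair subtracted in every $y_j$ is $a_{(p+1)/2}+a_{(p+3)/2}$ (so $m'=\frac{p-1}{2}$), not $a_2+a_p$.
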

\begin{proof}
Points 1, 2, and 5 are easy to calculate.\\
Points 3, 4 are easily proven using the fact that $z=\frac{1}{p-2}a+\gam$ and Lemma \ref{bemgam}.
\end{proof}
As a corollary of Lemmas \ref{prod1} and \ref{prod2} we obtain that $R_a$ acts on eigenspaces of $L_a$.
\begin{cor}\label{ra}
The following statements are true:
 \begin{enumerate}
		\item $A_{\lm_1}(L_a)=A_{\lm_1}(R_a)$;
		\item $R_a(A_{\lm_2}(L_a))=A_{\lm_2}(L_a)$;
		\item $R_a(A_{-\lm_2}(L_a))=A_{-\lm_2}(L_a)$.
\end{enumerate}
\end{cor}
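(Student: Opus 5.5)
We prove Corollary \ref{ra} by reading off right multiplication by $a$ on the eigenbasis $BE(L_a,\{a,b\})$ from Lemma \ref{basis2}. Since that basis splits $A$ into $A_1(L_a)=L(a)$, $A_{\lm_1}(L_a)=L(z(a))$, $A_{\lm_2}(L_a)=L(y_1(a),\dots,y_{(p-3)/2}(a))$ and $A_{-\lm_2}(L_a)=L(x_1(a),\dots,x_{(p-1)/2}(a))$, each of the three statements reduces to locating $R_a$ of these basis vectors. Throughout we use the standing assumption that $1,\lm_1,\lm_2,-\lm_2$ are distinct, so these four eigenspaces are exactly the spans listed.

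\emph{Item 1.} By Lemma \ref{prod1}(6), $z(a)*a=(1+(p-2)\eta)z(a)=\lm_1 z(a)$. Hence $A_{\lm_1}(L_a)=L(z(a))\subseteq A_{\lm_1}(R_a)$. For the reverse inclusion we need $R_a$ to be semisimple with $\lm_1$ occurring exactly once. The plan is to show that $R_a$ preserves each of $L(a)$, $L(y_i)$ and $L(x_i)$, which is the content of items 2 and 3, and that on $L(y_i)\oplus L(x_i)$ the eigenvalues of $R_a$ all have the form $\pm(1-\eta)\zeta$ with $\zeta$ a root of unity. Lemma \ref{prod1}(1),(2) and Lemma \ref{prod2}(1) show that, up to the scalar $1-\eta$ and signs, $R_a$ acts on these spaces by the doubling map $i\mapsto 2i$ on indices. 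This map permutes indices modulo $p$, so it is a signed permutation scaled by $1-\eta$ and its eigenvalues are of that form. We then need to check that $\lm_1$ is not among them, which may require a further good-characteristic condition; I will check whether the distinctness hypotheses suffice. Remark \ref{Raeigenvector} already records the $A^+_a$ part of this argument.

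\emph{Items 2 and 3.} For each basis vector $y_i(a)$, Lemma \ref{prod1}(1) (case $p>5$) or (2) (case $p=5$) gives $y_i(a)*a$ as a combination of the $y_j(a)$, so $R_a(A_{\lm_2}(L_a))\subseteq A_{\lm_2}(L_a)$. Similarly, Lemma \ref{prod2}(1) gives $x_i*a=\pm(1-\eta)x_l$, so $R_a(A_{-\lm_2}(L_a))\subseteq A_{-\lm_2}(L_a)$.

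For equality rather than mere inclusion we need $R_a$ to be injective on these spaces. Since $1-\eta\neq 0$, it suffices that the index maps $i\mapsto l$ (for the $x_i$) and $i\mapsto 2i$ or $p-3-2i$ (for the $y_i$) are bijections of the respective index sets. This follows because $2$ is invertible modulo $p$: on the $x$-side, indices $1,\dots,\frac{p-1}{2}$ label the pairs $\{j,-j\}$ of nonzero residues modulo $p$, and multiplication by $2$ permutes these pairs. The $y$-side is the main obstacle. The term $-y_1(a)$ in $(1-\eta)(y_{2i}(a)-y_1(a))$, together with the reindexing $p-3-2i$, makes the matrix unitriangular-like rather than monomial. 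I would handle this by passing to the vectors $a_{j+1}+a_{p+1-j}$ (the sums over pairs $\{j,-j\}$), on which $R_a$ acts as a signed permutation up to the factor $1-\eta$. The $y_i(a)$ are differences of such pair-sums, so $R_a$ acting on them is a permutation of a spanning set. Invertibility then follows from the invertibility of that permutation, and surjectivity onto $A_{\lm_2}(L_a)$ follows by dimension count.
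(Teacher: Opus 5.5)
\textbf{Items 2 and 3.} Your treatment of these is correct, and more careful than the paper, which only says the corollary follows from Lemmas \ref{prod1} and \ref{prod2}. Everything follows from the formula $a_{j+1}*a=(1-\eta)a_{2j+1}+\eta\gamma$ for $j\not\equiv 0 \pmod p$, with $\gamma$ as in Lemma \ref{bemgam} and indices read modulo $p$. So $R_a$ is $(1-\eta)$ times a signed permutation on $A_{-\lambda_2}(L_a)$, as you say.

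One correction on the $y$-side. For $u_j=a_{j+1}+a_{p+1-j}$ one gets $R_a(u_j)=(1-\eta)u_{\pi(j)}+2\eta\gamma$, where $\pi$ is doubling on the pairs $\{j,-j\}$. So $R_a$ is not a signed permutation of the $u_j$. The $\gamma$-term cancels on differences, giving $R_a(u_i-u_j)=(1-\eta)(u_{\pi(i)}-u_{\pi(j)})$. Since the $u_i-u_j$ span $A_{\lambda_2}(L_a)$, your argument still goes through. It also avoids the index bookkeeping of Lemma \ref{prod1}(1), which is off as printed.

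\textbf{Item 1: a genuine gap.} You left open the reverse inclusion $A_{\lambda_1}(R_a)\subseteq A_{\lambda_1}(L_a)$, and it cannot be closed under the stated hypotheses. Distinctness of $1,\lambda_1,\pm\lambda_2$ only rules out $\lambda_1=(1-\eta)\zeta$ for $\zeta=\pm1$. But $R_a$ on $A_{\lambda_2}(L_a)\oplus A_{-\lambda_2}(L_a)$ has eigenvalues $(1-\eta)\zeta$ for other roots of unity $\zeta$ too.

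Concretely, take $p=5$, so $R_ax_1=(1-\eta)x_2$ and $R_ax_2=-(1-\eta)x_1$. Over $\F=\mathbb{C}$, with $\mathrm{i}=\sqrt{-1}$, put $\eta=\frac{-1+2\mathrm{i}}{5}$. Then $1$, $\lambda_1=\frac{2+6\mathrm{i}}{5}$, $1-\eta=\frac{6-2\mathrm{i}}{5}$ and $\eta-1=\frac{-6+2\mathrm{i}}{5}$ are distinct, so the characteristic is good. Yet $\lambda_1=\mathrm{i}(1-\eta)$. The vector $w=x_1-\mathrm{i}x_2$ satisfies $R_aw=\lambda_1 w$ but $L_aw=(\eta-1)w$. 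Hence $w\in A_{\lambda_1}(R_a)\setminus A_{\lambda_1}(L_a)$.

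For $p=7$ the same thing happens inside $A_{\lambda_2}(L_a)$: there $\pi$ is a $3$-cycle, and primitive cube roots of unity appear. So item 1 is false as stated. The paper's one-line derivation does not catch this. Remark \ref{Raeigenvector} claims the $y_i$ are $R_a$-eigenvectors, which holds only for $p=5$, and the $A_a^-$-block is never examined.

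\textbf{What you can prove.} Your argument establishes $A_{\lambda_1}(L_a)\subseteq A_{\lambda_1}(R_a)$ in general. Every $\zeta$ arising above satisfies $\zeta^{p-1}=1$. So equality holds under the extra hypothesis $\bigl(\lambda_1/(1-\eta)\bigr)^{p-1}\neq 1$. This holds, for example, when $\eta=-\frac{1}{p-2}$ (so $\lambda_1=0$, the setting of Propositions 1 and 2). It also holds when $\F$ contains no roots of unity other than $\pm1$.
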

\smallskip

Simple calculations lead us to the fact that the product of any two elements from $A^-_a$ lies in $A^+_a$.
\begin{lem}\label{prod3}
$x_i*x_j\in A_a^+$.	
\end{lem}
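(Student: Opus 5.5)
The plan is to avoid computing the products $x_i*x_j$ directly and instead realize the decomposition $A=A_a^+\oplus A_a^-$ as the eigenspace decomposition of an explicit algebra automorphism of order $2$. Let $\sigma:A\to A$ be the linear map defined on the basis $T=I(a,b)$ by $\sigma(t)=t^a$, i.e.\ conjugation by $a$ in $G$.

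The first step is to check that $\sigma$ is an automorphism of $A$. More generally, every $g\in G$ induces an automorphism $t\mapsto t^g$, because the multiplication in the definition of $A_{\F}(G,T,\eta)$ is built purely from group data:
\[
(c^b)^g=(c^g)^{b^g},\qquad I(b^g,c^g)=I(b,c)^g,\qquad |b^gc^g|=|bc|.
\]
Applying $g$ to the formula
\[
b*c=c^b+\eta\sum_{x\in I(b,c)\setminus\{c^b\}}x
\]
therefore gives exactly $b^g*c^g$. Since $a$ is an involution, $\sigma^2=\mathrm{id}$ and $\sigma$ is diagonalizable with eigenvalues $\pm1$ (we use $\mathrm{char}\,\F\neq 2$).

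The second step is to identify the eigenspaces of $\sigma$ in the canonical basis $B(a,b)$. In the dihedral group, conjugation by $a=a_1$ acts on the reflections $a_1,\dots,a_p$ by $a_k\mapsto a_{2-k}$, with indices taken modulo $p$. Equivalently, $\sigma$ fixes $a_1$ and swaps $a_{i+1}\leftrightarrow a_{p+1-i}$ for $1\le i\le\frac{p-1}{2}$. This follows from the recursion $a_{k+1}=a_{k-1}^{a_k}$, which shows that the $a_k$ are the reflections $a(ab)^{k-1}$ up to the standard relabelling.

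From the coordinates in Lemma~\ref{basis2} one then reads off the following. The vectors $a$ and $z(a)$ are $\sigma$-invariant. Each $y_i(a)$ has the form $a_{i+1}+a_{p+1-i}-(a_{\frac{p+1}{2}}+a_{\frac{p+3}{2}})$, a difference of $\sigma$-symmetric pairs, so it is also $\sigma$-invariant. Each $x_i(a)=a_{p+1-i}-a_{i+1}$ satisfies $\sigma(x_i)=-x_i$. A dimension count confirms the identification: the fixed space of $\sigma$ has dimension $\frac{p+1}{2}=2+\frac{p-3}{2}$ and the $(-1)$-eigenspace has dimension $\frac{p-1}{2}$. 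Hence $A_a^+$ is exactly the fixed space of $\sigma$ and $A_a^-$ is exactly its $(-1)$-eigenspace.

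The conclusion is then immediate:
\[
\sigma(x_i*x_j)=\sigma(x_i)*\sigma(x_j)=(-x_i)*(-x_j)=x_i*x_j,
\]
so $x_i*x_j\in A_a^+$.

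The only step requiring care is the index bookkeeping in the second step, namely matching conjugation by $a$ with the reflection $k\mapsto p+2-k$ of the canonical enumeration. I would verify this on small cases and then by induction on the recursion defining $a_{k+1}$. As a by-product, the same argument re-proves Lemmas~\ref{aplus} and \ref{prod2}(4),(5) and exhibits $\tau_a$ as the automorphism $\sigma$.
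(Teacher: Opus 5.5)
Your proof is correct, but it takes a different route from the paper. The paper dispatches Lemma~\ref{prod3} by ``simple calculations'', i.e.\ by expanding $x_i*x_j$ in the canonical basis and rewriting the result in the eigenbasis of Lemma~\ref{basis2}.

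You instead make conjugation by $a$ into an involutory algebra automorphism $\sigma$ and identify $A_a^+$ and $A_a^-$ with its $\pm1$-eigenspaces. The lemma then follows in one line. Your index bookkeeping is right: $a_k=a(ab)^{k-1}$, so $a_k^a=a_{2-k}$. The identification of $A_a^+$ with the \emph{whole} fixed space uses the linear independence in Lemma~\ref{basis2}. That independence needs $p-2$ and $\frac{p-1}{2}$ to be nonzero in $\F$, which the good-characteristic hypothesis guarantees: otherwise $\lm_1=1$ or $\lm_1=\lm_2$.

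Your mechanism is exactly the one the paper uses later for the general case: Lemma~\ref{aaut} together with the decompositions $W=x+x^a$ inside Proposition~\ref{twograd}, and Lemma~\ref{dois} in the group-free setting. It also contains Lemma~\ref{Mi2} directly, since $\tau_a=\sigma$.

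What your approach buys is uniformity and no unverified computation. All of Corollary~\ref{twogenaxial} (Lemma~\ref{aplus}, the relevant parts of Lemma~\ref{prod2}, and Lemma~\ref{prod3}) follows at once, with no case analysis in $i$, $j$ or $p$. It would also let Section~4 avoid reducing to the dihedral case.

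What the direct calculation gives in addition is the explicit decomposition of products into $L_a$-eigencomponents, which the symmetry argument cannot see. For $x_i*x_j$ this extra information is not needed, because both $\mathcal M(\al,\bt)$ and $\mathcal{GM}(\al,\bt)$ have $\bt\circ\bt$ equal to the entire even part. The finer information about products inside $A_a^+$ is what Propositions~1--3 actually rely on, and your argument does not address those.
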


\begin{lem}\label{ra2}
    The linear operator $R_a$ is primitive. If $\F$ contains $\frac{p-1}{2}$-th roots of $-1$, then $R_a$ is semisimple.
\end{lem}
\begin{proof}
It follows from Lemma \ref{prod1} that $z, y_1, \dots, y_{\frac{p-3}{2}}$ are eigenvectors of $R_a$. Since $A_a^-$ is an invariant subspace with respect to $R_a$, it suffices to show that $A_a^-$ decomposes into a sum of eigenspaces of $R_a$ and that none of the eigenvalues of $R_a$ acting on $A_a^-$ is $1$. 

According to Lemma \ref{prod2}, the matrix of $R_a$ acting on $A_a^-$, written in the basis $x_1, \dots, x_{\frac{p-1}{2}}$, is a monomial matrix with non-zero entries $1-h$ and $-(1-h)$. Thus, the roots of the characteristic polynomial are $d(h-1)$, where $d$ is a $\frac{p-1}{2}$-th root of $-1$. Consequently, $R_a$ is a primitive operator. If $\F$ contains the $\frac{p-1}{2}$-th roots of $-1$, then $R_a$ is semisimple on $A$.

    %Из леммы \ref{prod1} следуут, что $z_a, y_1,..., y_{\frac{p-3}{2}}$ являются собственными векторами $R_a$. Достаточно показать, что $A_a^-$ разлогается в сумму собственных подпространств $R_a$ и среди собственных чисел $R_a$ при действии на $A_a^-$ нет $1$.
    %Из леммы \ref{prod2} следует, что матрица $R_a$ при действии на $A_a^-$ записанная в базисе $x_1,..., x_{\frac{p-1}{2}}$ является мономиальной с ненуливыми значениями $1-h$ и $-(1-h)$. Таким образом корнями характеристического многочлена являются $d(h-1)$, где $d$ это корень из $-1$ степени $\frac{p-1}{2}$. Следовательно, $R_a$ примитивный оператор. Если $\F$ содержит корни из $-1$ степени $\frac{p-1}{2}$, то $R_a$ полупростой на $A$. 
\end{proof}

%\begin{lem}
%We define a linear transformation $d_a$ on $A$ as follows $d_a(x)=x*a+a*x$. If there exists $y\in A$ such that $d_a(y)=y+\sigma a$, then $y\in L(a)$.
	%Определим линейное преобразование $d_a$ на $A$ следующим образом $d_a(x)=x*a+a*x$. Если найдется $d_a(y)=y+\sigma a$, то $y\in L(a)$.
%\end{lem}	
%\begin{proof}
%	Имеем $y=\al a+\bt z+ \gam_1 y_1+..+\sigma x_1+...$. Из Следствия \ref{ra} следует, что $\sigma=\al=0$. Предположим, что $\bt\neq0$. Из Леммы \ref{prod1} следует, что $d_a(y)_{A_{\lm_1}(a)}=2\lm_1\bt z$. Таким образом если $\bt\neq 0$, то $\lm_1=\frac{1}{2}$, в частности $\eta=-\frac{1}{2(p-2)}$    
%\end{proof}

%\begin{table}[h]
%	\centering
%	\begin{tabular}{|c||c|c|c|c|}
%		\hline
%		$\ast$ & $1$ & $\lm_1$ & $\lm_2$ & $-\lm_2$ \\
%		\hline\hline
%		$1$ & $1$ & $\lm_1$ & $\lm_2$ & $-\lm_2$ \\
%		\hline
%		$\lm_1$ & $\lm_1$ & $\{1,\lm_1\}$ & $\lm_2$ & $-\lm_2$ \\
%		\hline
%		$\lm_2$ & $\lm_2$ & $\lm_2$ & $\{1,\lm_1,\lm_2\}$ &  $-\lm_2$ \\
%		\hline
%		$-\lm_2$ & $-\lm_2$ & $-\lm_2$ & $-\lm_2$ & $\{1,\lm_1,\lm_2\}$ \\
%		\hline
%	\end{tabular}
%	\caption{Fusion law $\mathcal{G}(\lm_1,\lm_2,-\lm_2)$-type} \label{G}
%\end{table} 

The following 2 Propositions are obtained as corollaries of Lemmas \ref{prod1}, \ref{prod2}, \ref{prod3}.

\bigskip
\noindent
\textbf{Proposition 1.} \textit{	Let $(G,T)$ be a two generated group of $GM(5)$-type, $\eta=-\frac{1}{3}$ and $\F$ is a field of good characteristic relative to $(p,\eta)$. Every idempotent in $T$ is a left-axis of $\mathcal M(4/3,-4/3)$-type.}

\bigskip
\noindent\textbf{Proposition 2.} \textit{	Let $(G,T)$ be a two generated group of $GM(p)$-type, where $p>5$, $\eta=-\frac{1}{p-2}$, $A=A_{\F}(G,T,\eta)$ and $\F$ is a field of characteristic good related to $(p,\eta)$. Every idempotent in $T$ is a left-axis of $\mathcal{GM}(\lm,-\lm)$.
}

\bigskip
\noindent\textbf{Proposition 3.} \textit{Let $(H,T')$ be a group of $GM(p)$-type, where $p>5$, $A'=A(H,T',\eta)$. Any idempotent in $T'$ is not a primitive left $\mathcal M(\al,-\al)$-axis for any value of the parameter $\eta $.}

\begin{proof}
	Suppose that $a\in T'$ is a primitive left $\mathcal M(\al,-\al)$-axis of $A'$. Let $b\in T\setminus\{a\}$. 
	Note that $\lm_2$ cannot be equal to $0$. Consequently, $\lm_1=0$ and, therefore $\eta=-\frac{1}{p-2}$. It follows from Lemma \ref{prod1} that the projection of $y_i*z$ onto $L(a)$ is non trivial for $p>5$. Thus, $a$ is not a primitive left $\mathcal M(\al,-\al)$-axis of $A'$.
\end{proof}	

\begin{cor}\label{twogenaxial}
	%Разложение алгебры $A$ на $A^+_a$ и $A^-_a$ является $2$-градуировкой алгебры $A$, более того верны следующие утверждения:
	For each idempotent $c\in T$, the decomposition of the algebra $A$ into a direct product of the subspaces $A^+_c$ and $A^-_c$ is a $\Z_2$-grading of the algebra $A$.
\end{cor}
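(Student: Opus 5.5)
The corollary concerns the dihedral setting $G=\la a,b\ra$, $T=I(a,b)$, with $c\in T$ arbitrary. The plan is to reduce to the case $c=a$ and then read off the grading conditions from the multiplication lemmas already proved.

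\textbf{Reduction to $c=a$.} All the computations in Lemmas \ref{basis2}--\ref{prod3} were done for the first element of the canonical basis $B(a,b)$. Every $c\in T=I(a,b)$ generates, together with a suitable second involution $d\in I(a,b)\setminus\{c\}$, the same dihedral group, and $I(c,d)=I(a,b)$. The algebra $A=A_{\F}(G,T,\eta)$ depends only on $(G,T)$, not on the chosen generators. Hence we may re-run the construction with the canonical basis $B(c,d)$, and the lemmas apply verbatim with $a$ replaced by $c$. Alternatively, any $c\in T$ is conjugate to $a$ or $b$ in $G$, and conjugation by $g\in G$ permutes $T$ and preserves the multiplication table, since $(u*v)^g=u^g*v^g$ by the definition via $v^u$ and $I(u,v)$. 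So $g$ is an algebra automorphism mapping $A^\pm_a$ to $A^\pm_{a^g}$, and the symmetric roles of $a$ and $b$ cover the remaining case.

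\textbf{Direct sum.} By Lemma \ref{basis2}, the set $BE(L_a,\{a,b\})$ is a basis of $A$, and it is the disjoint union of the spanning sets of $A^+_a$ and $A^-_a$. Hence $A=A^+_a\oplus A^-_a$ and $A^+_a\cap A^-_a=0$. We interpret $A^\pm_a$ as the spans of the listed vectors, which is the evident intent of the definition.

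\textbf{Grading conditions.} Four inclusions are needed.
\begin{enumerate}
\item $A^+_a*A^+_a\subseteq A^+_a$: this is Lemma \ref{aplus}.
\item $A^+_a*A^-_a\subseteq A^-_a$: this follows from $a*x_i=-\lm_2x_i$ (Lemma \ref{basis2}), together with points 2 and 4 of Lemma \ref{prod2} for $z*x_i$ and $y_j*x_i$.
\item $A^-_a*A^+_a\subseteq A^-_a$: this is points 1, 3 and 5 of Lemma \ref{prod2}.
\item $A^-_a*A^-_a\subseteq A^+_a$: this is Lemma \ref{prod3}.
\end{enumerate}
Bilinearity extends all four inclusions from basis vectors to arbitrary elements.

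\textbf{Main obstacle.} The only nontrivial point is justifying the transfer from $a$ to an arbitrary $c$. The key check is that conjugation by $g\in G$ is an automorphism of $A$ carrying the eigenbasis of $L_a$ to that of $L_{a^g}$ with the same eigenvalues, since $L_{a^g}=g L_a g^{-1}$. The spaces $A^\pm$ are defined through eigenvalues ($A^+_a=A_{\{1,\lm_1,\lm_2\}}(L_a)$ and $A^-_a=A_{-\lm_2}(L_a)$ when these values are distinct, which is the good characteristic assumption). Under that assumption the images match canonically, so $A^\pm_{a^g}=(A^\pm_a)^g$ and the grading property transfers.
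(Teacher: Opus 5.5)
Your proposal is correct and follows the paper's argument. The paper gives no separate proof of this corollary; it is the conjunction of Lemma \ref{aplus}, Lemma \ref{prod2} (together with $a*x_i=-\lm_2x_i$) and Lemma \ref{prod3}, and the passage from $a$ to an arbitrary $c\in T$ is left implicit because any $c$ can serve as the first element of a canonical basis $B(c,d)$, exactly as in your reduction.
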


Consequently, an automorphism $\tau_c $ of $A$ is defined.

\begin{lem}\label{xxa}
For any $x\in A$, $x+x^a\in A^+_a$.
%Для любого $x\in A$ выполнено $x+x^a\in A^+_a$.
\end{lem}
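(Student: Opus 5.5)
By linearity it suffices to prove the claim on the canonical basis $B(a,b)=\{a_1,\dots,a_p\}$ with $a=a_1$: for $x=\sum_j c_j a_j$ we have $x+x^a=\sum_j c_j(a_j+a_j^a)$, and $A^+_a$ is a subspace. So we only need to show that $a_j+a_j^{a}\in A^+_a$ for every $j$.

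For $j=1$ we have $a^a=a$, so $a+a^a=2a\in A^+_a$. For $j\neq 1$, conjugation by $a$ is an involutory permutation of $I(a,b)\setminus\{a\}$ without fixed points, since $p$ is odd and $a$ commutes with no other involution of the dihedral group. In the canonical enumeration it pairs $a_{i+1}$ with $a_{p+1-i}$ for $1\le i\le \frac{p-1}{2}$. This is exactly the pairing used in the vectors of Lemma \ref{basis2}: $x_i(a)=a_{p+1-i}-a_{i+1}$, and the $y_i(a)$ are combinations of the symmetric sums $s_i:=a_{i+1}+a_{p+1-i}$. Before going further I will check the pairing directly from the recursion $a_{i+1}=a_{i-1}^{a_i}$ by realizing $I(a,b)$ as reflections of a $p$-gon.

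It then remains to show that each symmetric sum $s_i=a_{i+1}+a_{i+1}^a$ lies in $A^+_a=L(a,z(a),y_1(a),\dots,y_{(p-3)/2}(a))$. There are $\frac{p-1}{2}$ such sums, and together with $a$ they span the $\frac{p+1}{2}$-dimensional subspace $S$ of vectors fixed by the involution $a_j\mapsto a_j^a$. The space $A^+_a$ has the same dimension $1+1+\frac{p-3}{2}=\frac{p+1}{2}$. Each of its basis vectors is $a$-symmetric: $a$ trivially; $z(a)=\frac{1}{p-2}a+\gam$ because $\gam$ is invariant; and $y_i(a)=s_i-s_{(p-1)/2}$ by the explicit coordinates in Lemma \ref{basis2}. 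Hence $A^+_a\subseteq S$, and since $A^+_a$ is spanned by part of a basis of $A$, equality of dimensions gives $A^+_a=S$. Consequently $s_i\in A^+_a$.

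As a cross-check, one can also write the sums explicitly: $s_{(p-1)/2}$ is obtained from $\gam=a+\sum_i s_i$ and the $y_i$, and then $s_i=y_i+s_{(p-1)/2}$. The only real obstacle is the index bookkeeping, namely confirming that conjugation by $a$ matches the pairing $a_{i+1}\leftrightarrow a_{p+1-i}$ in the canonical basis. Since $a$ commutes with no other involution, the antisymmetric vectors $x_i(a)$ then span the $(-1)$-eigenspace of this involution, which matches the $\frac{p-1}{2}$-dimensional space $A^-_a$, consistent with the decomposition.
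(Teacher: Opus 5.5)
Your argument is correct, but it is organized differently from the paper's. The paper proves the lemma by writing down one explicit linear combination of $a$, $z(a)$ and the $y_i(a)$ equal to $\frac{b+b^a}{2}$, with $b=a_2$ and $b^a=a_p$. You instead observe that $a$, $z(a)=\frac{1}{p-2}a+\gamma$ and $y_i(a)=s_i-s_{(p-1)/2}$ are all fixed by the linear involution $a_j\mapsto a_j^a$. Hence $A^+_a$ lies in the fixed space $S=L(a,s_1,\dots,s_{(p-1)/2})$, and you get $A^+_a=S$ by counting dimensions. The pairing you flagged as the only obstacle is indeed right: $a_{i+1}=a(ab)^i$ gives $a_{i+1}^a=a(ab)^{-i}=a_{p+1-i}$.

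Your route yields a structural statement rather than a single identity. $A^+_a$ and $A^-_a$ are exactly the $(+1)$- and $(-1)$-eigenspaces of conjugation by $a$, which gives Lemma~\ref{xmxa} and Lemma~\ref{Mi2} at the same time. It also avoids coefficient bookkeeping. In fact the paper's displayed identity has the wrong denominator: its numerator equals $\frac{p-1}{2}(b+b^a)$, so one must divide by $p-1$, not $p-3$. Once corrected, the explicit formula has the advantage of being a direct check that needs no appeal to linear independence.

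Both proofs rest on the same tacit hypothesis. Your dimension count uses that $a,z(a),y_1(a),\dots,y_{(p-3)/2}(a)$ are linearly independent, which holds exactly when $\frac{p-1}{2}\neq 0$ in $\F$. The corrected formula divides by $p-1$, which vanishes in exactly the same characteristics. This is guaranteed by good characteristic, because $\lambda_1\neq\lambda_2$ amounts to $\eta(p-1)\neq 0$.
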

\begin{proof}
Since $x$ is the sum of elements from $T$, it is sufficient to prove that $b+b^a\in A^+_a$. We have:	$$\frac{b+b^a}{2}=\frac{z-\frac{p-1}{p-2}a-\sum_{2}^{(p-1)/2-1} y_i +((p-1)/2-1)y_1}{(p-3)}$$  
\end{proof}
\begin{lem}\label{xmxa}
	For any $x\in A$, $x-x^a\in A^-_a$ holds.
	%Для любого $x\in A$ выполнено $x+x^a\in A^+_a$.
\end{lem}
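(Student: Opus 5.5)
By linearity it suffices to prove the claim for a single basis element $b\in T=I(a,b)$. For $b=a$ we have $a-a^a=0\in A^-_a$, so we may assume $b\neq a$. Work in the canonical basis $B(a,b)=\{a_1,\dots,a_p\}$ with $a_1=a$, $a_2=b$ and $a_{i+1}=a_{i-1}^{a_i}$. Conjugation by $a$ fixes $a_1$ and acts on $\{a_2,\dots,a_p\}$ as an involution without fixed points, because an odd dihedral group contains exactly one involution commuting with $a$.

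\emph{Step 1: identify the pairing induced by $a$.} The enumeration runs along the $p$-gon, so $a_{i+1}=(ab)^{k}a$ (or $b(ab)^k$) for suitable $k$. Conjugation by $a$ reflects the polygon through the vertex $a_1$, so we expect $a_j^a=a_{p+2-j}$ for $2\le j\le p$. I would verify this by induction on $j$, using $a^a=a$ and $a_2^a=b^a=a_p$. The latter holds because $a_p$ is the neighbour of $a_1$ on the other side of the polygon, i.e.\ $a_p=a_2^{a_1}$, which follows from the recursion. The inductive step uses the identity $(a_{i-1}^{a_i})^a=(a_{i-1}^a)^{a_i^a}$.

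\emph{Step 2: compare with the basis of $A^-_a$.} With the indexing of Lemma~\ref{basis2}, $x_k(a)$ has entry $-1$ at position $k+1$ and $+1$ at position $p+1-k$; for instance $x_1=-a_2+a_p$. Under the pairing of Step 1, position $k+1$ is paired with $p+2-(k+1)=p+1-k$. Hence $a_{k+1}-a_{k+1}^a=a_{k+1}-a_{p+1-k}=-x_k(a)$ for $1\le k\le \frac{p-1}{2}$. Every $b\ne a$ equals either $a_{k+1}$ or $a_{p+1-k}$ for such a $k$, so $b-b^a=\pm x_k(a)\in A^-_a$.

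\emph{Step 3: extend by linearity.} Since $x\mapsto x^a$ is linear on $A$ by definition, $x-x^a\in A^-_a$ for all $x\in A$.

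The only genuine point is Step 1, namely matching the paper's recursive enumeration with the reflection in $a$. A cleaner way to do this is to note that the $x_k(a)$ already span a $\frac{p-1}{2}$-dimensional space of vectors antisymmetric under some pairing. The vectors $b-b^a$, taken over representatives of the $\frac{p-1}{2}$ orbits of $a$ on $T\setminus\{a\}$, are antisymmetric under the $a$-pairing. It therefore suffices to check that the two pairings coincide, and Lemma~\ref{xxa}'s formula for $b+b^a$ (symmetric combinations) implicitly uses the same identification.
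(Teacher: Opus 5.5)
Your proof is correct and takes the same route as the paper: reduce by linearity to a single element of $T$, then observe that $b-b^a$ is $\pm$ one of the basis vectors $x_k(a)$ of $A^-_a$. The paper writes down only the generator case (its ``$b-b^a=x_1$'' is really $-x_1$). Your identification $a_j^a=a_{p+2-j}$ for every $j$ is the detail the paper leaves implicit; it is cleanest via $a_i=(ba)^{i-2}b$, which also gives the periodicity $a_{p+1}=a_1$ that your base case $a_2^{a_1}=a_p$ tacitly uses.
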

\begin{proof}
	Since $x$ is the sum of elements from $T$, it is sufficient to prove that $b-b^a\in A^-_a$. If $b=a$, then $b-b^a=0$, else $b-b^a=x_1$ and therefore $b-b^a\in A_a^-$.     
\end{proof}

Let us construct a map $f: \langle \tau_a, \tau_b \rangle \to G$ such that $f(\tau_a)=a$ and $f(\tau_b)=b$.

\begin{lem}\label{Mi2}
	For any $c,d\in T$, we have $\tau_c(d)=d^c$.
\end{lem}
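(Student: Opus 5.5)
We work in the dihedral setting of this section: $G=\la a,b\ra$ of order $2p$, $T=I(a,b)$, and $\tau_c$ is the automorphism attached to the $\Z_2$-grading $(A^+_c,A^-_c)$ from Corollary \ref{twogenaxial}. We need $\tau_c(d)=d^c$ for all $c,d\in T$. Since any $c\in T$ together with any other element of $T$ generates $G$, we may relabel the canonical basis to start at $c$. So it suffices to treat $c=a$ and show $\tau_a(d)=d^a$ for every $d\in T$.

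The key step is to decompose $d$ along the grading using Lemmas \ref{xxa} and \ref{xmxa}. Write
$$d=\frac{d+d^a}{2}+\frac{d-d^a}{2}.$$
By Lemma \ref{xxa} the first summand lies in $A^+_a$. By Lemma \ref{xmxa} the second lies in $A^-_a$. Here the characteristic is not $2$, so dividing by $2$ is allowed. Because $A=A^+_a\oplus A^-_a$ is a direct sum, this is exactly the decomposition of $d$ into its $A^+_a$- and $A^-_a$-components. By definition of $\tau(Gr)$ we then get
$$\tau_a(d)=\frac{d+d^a}{2}-\frac{d-d^a}{2}=d^a.$$
The case $d=a$ is trivial, since $a\in A^+_a$ and $a^a=a$.

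The main obstacle is that the proofs of Lemmas \ref{xxa} and \ref{xmxa} were only written out for the element $b$, the second generator of the canonical basis. For a general $d=a_k$ we need $a_k\pm a_k^a$ to lie in $A^\pm_a$. For the minus sign this is immediate. The canonical ordering makes conjugation by $a$ act as $a_k\mapsto a_{p+2-k}$, so $a_k-a_k^a=\pm x_j(a)$ for the appropriate $j$, which lies in $A^-_a$ by Lemma \ref{basis2}. For the plus sign, note that
$$\operatorname{span}\{a_k+a_{p+2-k}\}\oplus L(a)$$
has dimension $\frac{p-1}{2}+1=\frac{p+1}{2}=\dim A^+_a$. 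This space is exactly the set of vectors fixed by the coordinate permutation induced by $a$. The basis vectors $a,z(a),y_i(a)$ of $A^+_a$ all have this symmetry, visibly from Lemma \ref{basis2}, and they are linearly independent. So $A^+_a$ equals this fixed space, and hence $a_k+a_k^a\in A^+_a$.

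Equivalently, $A^-_a$ is the antisymmetric part and $A^+_a$ is the symmetric part of $A$ under $x\mapsto x^a$. This gives $\tau_a(x)=x^a$ for all $x$, in particular for $x=d\in T$. The same argument with $a$ replaced by any $c\in T$ finishes the proof.
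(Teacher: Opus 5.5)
Your proof is correct and is essentially the paper's argument: write $d=\frac12(d+d^c)+\frac12(d-d^c)$, put the two summands in $A^+_c$ and $A^-_c$ by Lemmas \ref{xxa} and \ref{xmxa}, and read off $\tau_c(d)=d^c$. Your additional step identifies $A^+_a$ and $A^-_a$ with the fixed and negated subspaces of $x\mapsto x^a$, using the dimension counts $\frac{p+1}{2}$ and $\frac{p-1}{2}$. This justifies those two lemmas for every $d=a_k$, which the paper only checks for $b$.
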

\begin{proof}
	Let $d^+=\frac{1}{2}(d+d^c)$ and $d^-=\frac{1}{2}(d-d^c)$. We have $d=d^++d^-$. From Lemmas \ref{xxa} and \ref{xmxa} it follows that $d^+\in A^+_c$ and $d^-\in A^-_c$. Thus $\tau_c(d)=\tau_c(d^++d^-)=d^+-d^-=d^c$. 
\end{proof}

\begin{lem}
	The mapping $f$ is an isomorphism.
\end{lem}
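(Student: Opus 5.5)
The plan is to realize $\la\tau_a,\tau_b\ra$ as a group of permutations of the basis $T=I(a,b)$ and compare it with the conjugation action of $G$. By Lemma \ref{Mi2}, for every $c\in T$ the automorphism $\tau_c$ maps the basis $T$ to itself by $d\mapsto d^c$. Hence every element of $M:=\la\tau_a,\tau_b\ra$ permutes $T$. An automorphism of $A$ is determined by its values on the basis $T$, so the map $M\to \mathrm{Sym}(T)$ is injective. Its image is generated by the permutations induced by conjugation by $a$ and by $b$.

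Next consider the conjugation homomorphism $\pi: G\to \mathrm{Sym}(T)$, $g\mapsto (d\mapsto d^g)$. Its kernel is $C_G(T)$. Since $T$ generates $G$, this kernel equals $Z(G)$. For the dihedral group $G$ of order $2p$ with $p$ odd prime, $Z(G)=1$, so $\pi$ is injective. By the previous paragraph, $\pi(G)=\pi(\la a,b\ra)$ coincides with the image of $M$ in $\mathrm{Sym}(T)$. The two generators correspond: $\tau_a\leftrightarrow\pi(a)$ and $\tau_b\leftrightarrow\pi(b)$.

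Define $f$ as the composite of the injection $M\hookrightarrow\mathrm{Sym}(T)$ with $\pi^{-1}$ on the common image. This is a group isomorphism with $f(\tau_a)=a$ and $f(\tau_b)=b$. In particular $f$ is well defined: any word in $\tau_a,\tau_b$ that is trivial in $M$ maps to a trivial word in $G$, and conversely.

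One point needs care. Lemma \ref{Mi2} gives $\tau_c(d)=d^c$, where $d^c=c^{-1}dc$. Composing automorphisms on one side and conjugations on the other may reverse the order of multiplication, so the natural map could be an anti-homomorphism. Since $G$ is generated by involutions, composing with inversion $g\mapsto g^{-1}$ (which fixes $a$ and $b$) converts the anti-isomorphism into an isomorphism with the same values on the generators. I therefore do not expect a genuine obstacle.

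The main substantive input is the injectivity of $\pi$, that is, $Z(G)=1$ for the dihedral group $G$. This holds because $p$ is odd: $\la ab\ra$ has order $p$ and is self-centralizing, and no involution is central.
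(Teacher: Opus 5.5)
Your proof is correct and follows essentially the same route as the paper. Both arguments embed $\la\tau_a,\tau_b\ra$ and $G$ into $\mathrm{Sym}(T)$, the first via its action on the basis and the second via conjugation, use Lemma~\ref{Mi2} to see that $\tau_c$ and $c$ induce the same permutation, and conclude from the triviality of both kernels. You additionally justify the injectivity on the group side (via $Z(G)=1$ for the dihedral group of order $2p$) and handle the possible order reversal of the conjugation action; the paper leaves both points implicit.
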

\begin{proof}
We define a homomorphism $\phi:\la\tau_a,\tau_b\ra\rightarrow Sym_{T}$ as $\phi(\tau_c)(d)=\tau_cd$ and a homomorphism $\psi:G\rightarrow Sym_{T}$ as $\psi(c)d=d^c$ for any $c,d\in T$. It follows from Lemma \ref{Mi2} that $\phi(\tau_c)=\psi(c)$. Since the kernels of the homomorphisms $\phi$ and $\psi$ are trivial, $\la\tau_a,\tau_b\ra\simeq G$ and, in particular, $f=\phi\psi^{-1}$ is an isomorphism.
%Определим гомоморфизм $\phi:\la\tau_a,\tau_b\ra\rightarrow Sym_{T}$ следующим образом $\phi(\tau_c)(d)=\tau_cd$ и гомоморфизм $\psi:G\rightarrow Sym_{T}$ следующим образом $\psi(c)d=d^c$ для любых $c,d\in T$. Из леммы \ref{Mi2} следует, что $\phi(\tau_c)=\psi(c)$. Поскольку ядра гомоморфизмов $\phi$ и $\psi$ тривиальны, то $\la\tau_a,\tau_b\ra\simeq G$ и в частносим $f=\phi\psi^{-1}$ является изоморфизмом.  
\end{proof}	

We define a bilinear form on the algebra $A$ and show that for some parameter $\eta$ this form is a left Frobenius form.

\begin{defi}
	We define a bilinear form $(\cdot,\cdot)$ on the algebra $A$ as follows: for any distinct elements $a,b\in T$ put $(a,b)=\eta$ and $(a,a)=1$.
\end{defi}

\begin{lem}\label{twofform}
	If $\eta=-\frac{1}{p-2}$, then the form $(\cdot,\cdot)$ is a left Frobenius form.
	\end{lem}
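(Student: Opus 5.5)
Since both sides of $(x*y,w)=(y,x*w)$ are trilinear, I will check the identity only on basis triples $a,b,c\in T=I(a,b)$, where now $a,b,c$ are arbitrary (not necessarily distinct) elements of $T$. The plan is to rewrite the multiplication in a form that makes the check uniform, using $\gam=\sum_{x\in T}x$ from Lemma \ref{bemgam}.

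\emph{Step 1: rewrite the product.} For $b\neq a$ the definition gives $a*b=b^a+\eta(\gam-b^a)$, that is,
$$a*b=(1-\eta)\,b^a+\eta\gam ,$$
while $a*a=a$.

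\emph{Step 2: two facts about the form.} The form is symmetric. Conjugation by $a$ permutes $T$, so it is an isometry, and since it is an involution we get $(b^a,c)=(b,c^a)$. Next, for every $c\in T$,
$$(\gam,c)=1+\eta(p-1).$$
The role of the hypothesis $\eta=-\frac{1}{p-2}$ is exactly that it makes $1+\eta(p-1)=\frac{p-2-(p-1)}{p-2}=-\frac{1}{p-2}=\eta$. So $(\gam,c)=\eta$ for all $c\in T$.

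\emph{Step 3: case check for $(a*b,c)=(b,a*c)$.}
\begin{enumerate}
\item If $b\neq a$ and $c\neq a$: the left side is $(1-\eta)(b^a,c)+\eta(\gam,c)=(1-\eta)(b^a,c)+\eta^2$. The right side is $(1-\eta)(b,c^a)+\eta^2$. These agree by Step 2.
\item If $b\neq a$ and $c=a$: the left side is $(1-\eta)(b^a,a)+\eta^2=(1-\eta)\eta+\eta^2=\eta$, using $b^a\neq a$. The right side is $(b,a)=\eta$.
\item If $b=a$ and $c\neq a$: the left side is $(a,c)=\eta$. The right side is $(a,(1-\eta)c^a+\eta\gam)=(1-\eta)\eta+\eta^2=\eta$, using $c^a\neq a$.
\item If $b=c=a$: both sides equal $1$.
\end{enumerate}

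\emph{Main obstacle.} There is essentially no deep step here. The one thing to get right is the bookkeeping in the degenerate cases where $a*a=a$ does not follow the generic formula of Step 1. In each such case the identity $(\gam,c)=\eta$ is what rescues it, and that identity holds precisely when $\eta=-\frac{1}{p-2}$.
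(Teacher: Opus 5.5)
Your proof is correct and takes the same route as the paper: both reduce to basis triples and verify $(c*d,r)=(d,c*r)$ case by case. Your rewriting $a*b=(1-\eta)b^a+\eta\gamma$, together with the isometry identity $(b^a,c)=(b,c^a)$, covers the paper's two pairwise-distinct subcases ($d^c=r$ and $d^c\neq r$) in one computation, and it also shows that the hypothesis $\eta=-\frac{1}{p-2}$ is needed only when one of $b,c$ equals $a$.
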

\begin{proof}
Let $c,d,r\in T$. We prove that $(c*d,r)=(d,c*r)$.
Assume that $c=d$ and $c\neq r$. We have:
$$(c*c,r)=(c,r)=\eta,$$

$$(c,c*r)=(r,r^c+\eta(I(c,r)\setminus r^c))=\eta+\eta^2(n-2)+\eta=\eta.$$
The case when $c=r$ is proved similarly. The case when $c=r$ follows from the symmetry of the form.

Assume that $c,d,r$ are pairwise distinct elements and that $d^c=r$. Note that if $d^c=r$, then $r^c=d$. We have: $$(c*d,r)=(d^c+\eta(I(d,c)\setminus d^c),r)=1+\eta^2*(p-1),$$
$$(d,c*r)=(d,r^c+\eta(I(c,r)\setminus r^c))=1+\eta^2*(p-1)$$
The case when $d^c\neq r$ is checked similarly.

	%Пусть $a,b,c\in T$. Докажем, что $(a*b,c)=(b,a*c)$. 
	%Предположим, что $a=b$ и $a\neq c$. Имеем:
	%$$(a*a,c)=(a,c)=\eta_n,$$
	
	%$$(a,a*c)=(a,c^a+\eta_n(I(a,c)\setminus c^a))=\eta_n+\eta_n^2(n-2)+\eta_n=\eta_n$$
	%Аналогично доказывается случай, когда $a=c$. Случай когда $b=c$ следует из симметричности формы.
	
	%Допустим, что $a,b,c$ попарно различные элементы. Предположим, что $b^a=c$. Заметим, что если $b^a=c$, то $c^a=b$.
	%Имеем: $$(a*b,c)=(b^a+\eta_n(I(a,b)\setminus b^a),с)=1+\eta_n^2*(n-1)$$
	%$$(b,a*c)=(b,c^a+\eta_n(I(a,c)\setminus c^a))=1+\eta_n^2*(n-1)$$
	%Аналогично проверяется случай когда $b^a\neq c$.
\end{proof}

%Пусть $p=5$. Тогда
%\begin{lem}\label{prod25}
%	\begin{enumerate}
%		\item $alp_1*alp_1=(1 - 4 \eta_5)a+16/3 (-1 + \eta_5)z$
%		
%		\item $alp_1*bet_1= bet_1-2 (-1 + \eta_5)bet_2$
%		
%		\item $alp_1*bet_2= -2 (-1 + \eta_5)bet_1- bet_2$
%		
		
%		\item $bet_1*alp_1= (2 - eta)bet_1+(-1 + eta)bet_2$
		
%		\item $bet_2*alp_1= (-1 + eta)bet_1+(-2 + eta)bet_2$
		
%		\item $bet_1*bet_1=-(3 \eta_5/2)z + (2 - \eta_5)/2alp_1$
		
%		\item $bet_2*bet_2=-(3 \eta_5/2)z, x2 -(2 - \eta_5)/2alp_1$
		
%		\item $bet_1bet2=(-1 + eta)/2z, (8/3)(1 - eta)a, (-1 + eta)/2alp_1$
		
%		\item $bet_2bet_1=(1 - eta)/2z+ 8/3 (-1 + eta)a+ 1/2 (-1 + eta)alp_1$
%	\end{enumerate}
%\end{lem}

%\begin{proof}
%	Вычисления легко проверяются при помощи GAP и программы \cite{GorGit}.
%\end{proof}

%\begin{rem}
%Заметим, что при $\eta_5=-1/3$ алгебра $A(G,T,\eta_5)$ порождена двумя осями монстрового типа $(4/3,-4/3)$.
%\end{rem}

\section{Finitely generated case}
In this section we fix: $(G,T)$ is the group of $GM(p)$-type, $\eta\in \F$ and characteristic of $\F$ is good related to $(p,\eta)$. 
%В настоящем параграфе зафиксируем: $(G,T)$ группа строго $p$-транспозиций, $\F$ поле характеристики $0$, $\eta\in \F\setminus\{0,1\}$, $A=A(G,T,\eta)$ .

\begin{lem}\label{inter}
Let $a,b,c\in T$, $X=\lla a,b\rra\cap\lla a,c\rra$. If $c\not \in I(a,b)$, then $X=L(a)$, else $X=\lla a,b\rra=\lla a,c\rra$. 
\end{lem}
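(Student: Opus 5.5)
First I identify the two dihedral subalgebras as subspaces. For $u,v\in T$ the product $u*v$ is by definition a linear combination of elements of $I(u,v)$. Also $I(u,v)$ is closed under conjugation by its own members. Hence $L(I(a,b))$ is closed under $*$, so $\lla a,b\rra\subseteq L(I(a,b))$. Conversely, $\lla a,b\rra$ is a quotient-free copy of the $2$-generated algebra of Section 3: it is exactly $A_{\F}(\la a,b\ra, I(a,b),\eta)$ viewed inside $A$, and its structure constants are the same. I would show $\lla a,b\rra = L(I(a,b))$, i.e.\ that $a$ and $b$ generate all $a_i\in B(a,b)$.

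For that I use Lemma \ref{basis2} and Corollary \ref{twogenaxial}/Lemma \ref{Mi2} inside the dihedral algebra. Since the eigenvalues $1,\lm_1,\lm_2,-\lm_2$ are distinct (good characteristic), the spectral projections of $L_a$ are polynomials in $L_a$. Therefore the subalgebra $\lla a,b\rra$, being $L_a$-invariant, contains the projections of $b$. Among these is the $A^-_a$-component $\frac12(b-b^a)$ (Lemma \ref{xmxa}, Lemma \ref{xxa}). Hence $b^a=b-2\cdot\frac12(b-b^a)\in\lla a,b\rra$. Iterating with the pairs $(b,a)$, $(b^a, b)$, etc., and using that the conjugates along the canonical enumeration $a_{i+1}=a_{i-1}^{a_i}$ exhaust $I(a,b)$, I get all of $B(a,b)$. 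So $\lla a,b\rra=L(I(a,b))$, of dimension $p$.

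Now take the two cases. If $c\in I(a,b)$ with $c\neq a$, then $I(a,c)=I(a,b)$, because $\la a,c\ra$ is a dihedral subgroup of order $2p$ inside $\la a,b\ra$ of the same order. So $\lla a,c\rra=L(I(a,c))=L(I(a,b))=\lla a,b\rra$. (If $c=a$ the statement is degenerate; I read the lemma with $c\ne a$.) If $c\notin I(a,b)$, then $X=L(I(a,b))\cap L(I(a,c))=L(I(a,b)\cap I(a,c))$, since $T$ is a basis of $A$. It remains to show $I(a,b)\cap I(a,c)=\{a\}$. If some $d\ne a$ lay in both, then by the previous observation $I(a,b)=I(a,d)=I(a,c)$, contradicting $c\notin I(a,b)$. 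Hence $X=L(a)$.

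The main obstacle is the generation step $\lla a,b\rra\supseteq I(a,b)$. There I need to make sure that the polynomial-in-$L_a$ projection argument is legitimate. It relies on the distinctness of the four eigenvalues, which is exactly the good-characteristic hypothesis fixed in this section, together with semisimplicity from Lemma \ref{basis2}. I also need that $A^-_a$ is precisely the $-\lm_2$-eigenspace, so that $b-b^a=x_1(a)$ is recovered. Everything else is bookkeeping with the basis $T$.
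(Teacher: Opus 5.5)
Your proof is correct and follows essentially the same route as the paper. In both, the first case uses $\lla a,b\rra\subseteq L(I(a,b))$ together with the fact that $T$ is a basis, which gives $X=L(I(a,b)\cap I(a,c))=L(a)$. The second case uses that two distinct involutions of $I(a,b)$ generate the same dihedral group.

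The paper silently assumes $\lla a,b\rra=L(I(a,b))$, which the second case needs. Your spectral-projection argument (valid under the good-characteristic assumption) supplies this generation step. Your exclusion of the degenerate case $c=a\neq b$ is also warranted: there $\lla a,c\rra=L(a)\neq\lla a,b\rra$, so the statement as written fails.
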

\begin{proof}
	Suppose that $c\not \in I(a, b)$. The intersection of the subgroups $\la a, b\ra$ and $\la a, c\ra$ is $\la a \ra$. In particular, $I(a,b)\cap I(a,c)=\{a\}$. Let $x\in X$. Since $I(a,b)$ and $I(a,c)$ are subsets of a basis of the algebra $A$, it follows that $x\in L(I(a,b)\cap I(a,c))$. Therefore, $x\in \lla a\rra$ and $X=L(a)$.
	
	Suppose that $c \in I(a, b)$. In this case, the subgroups $\la a, b\ra$ and $\la a, c\ra$ coincide, and therefore the subalgebras $\lla a,b\rra$ and $\lla a,c\rra$ coincide.  
\end{proof}

For each $a\in T$ we define subset $\{t_i(a)|1\leq i\leq\frac{|T|-1}{p-1}\}=T_a\subseteq T$ such that $\lla a,t_i(a)\rra\cap\lla a,t_j(a)\rra=L(a)$ for distinct $i$ and $j$, and $T=\bigcup_{1\leq i\leq\frac{|T|-1}{p-1}} I(a,t_i(a))$.

\begin{lem}\label{spec}
Let $a\in T$. Then $a$ is a left primitive idempotent and $L_a$ is semisimple linear operator of $A$ with the spectrum $\{1,\lm_1,\lm_2,-\lm_2\}$.  
\end{lem}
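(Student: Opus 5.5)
The plan is to decompose $A$ into the dihedral subalgebras through $a$ and reduce to the two-generated case of Lemmas \ref{sv} and \ref{basis2}. Fix the set $T_a=\{t_i(a)\}$ defined above. Then $T=\bigcup_i I(a,t_i(a))$, and for $i\neq j$ we have $I(a,t_i(a))\cap I(a,t_j(a))=\{a\}$ by Lemma \ref{inter}. Since $T$ is a basis of $A$, this gives a direct-sum decomposition of vector spaces
$$A=L(a)\oplus\bigoplus_i L\bigl(I(a,t_i(a))\setminus\{a\}\bigr).$$
The first step is to check that each $A_i:=L(I(a,t_i(a)))=\lla a,t_i(a)\rra$ is $L_a$-invariant. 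For $d\in I(a,t_i(a))$, the product $a*d$ is a combination of $d^a$ and of elements of $I(a,d)=I(a,t_i(a))$. Moreover, the restriction of $L_a$ to $A_i$ is exactly the operator $L_a$ of the dihedral algebra $A_{\F}(\la a,t_i(a)\ra, I(a,t_i(a)),\eta)$, because the multiplication rule only involves $I(a,d)$.

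Next, apply Lemma \ref{basis2} inside each $A_i$. It gives an eigenbasis $a, z_i(a), y^{(i)}_1,\dots,y^{(i)}_{(p-3)/2}, x^{(i)}_1,\dots,x^{(i)}_{(p-1)/2}$ with eigenvalues $1,\lm_1,\lm_2,-\lm_2$. Collecting $a$ together with all the $z_i,y^{(i)}_k,x^{(i)}_k$ over all $i$ gives a spanning set of $A$. It is linearly independent because the $A_i$ pairwise intersect only in $L(a)$, and $a$ appears once while the other vectors of $A_i$ together with $a$ form a basis of $A_i$. So $A$ has a basis of $L_a$-eigenvectors, and $L_a$ is semisimple with spectrum contained in $\{1,\lm_1,\lm_2,-\lm_2\}$. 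All four values occur since $p\ge3$; if $p=3$, then $\lm_2$ has multiplicity $0$ and the spectrum shrinks accordingly, which is worth remarking. This argument works for arbitrary (possibly infinite) $T$, since every element of $A$ lies in a finite sum of the $A_i$.

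For primitivity of $L_a$, use the good-characteristic hypothesis: $1,\lm_1,\lm_2,-\lm_2$ are distinct. Hence $A_1(L_a)$ is spanned by the eigenbasis vectors with eigenvalue $1$, and the only such vector is $a$. So $\dim A_1(L_a)=1$.

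The main obstacle is the bookkeeping in the independence and decomposition step. One has to confirm that $z_i(a)$, written as $\frac{p-1}{p-2}a+\dots$, still yields independence: subtracting multiples of $a$ reduces every $z_i$ to a vector supported on $I(a,t_i(a))\setminus\{a\}$, and distinct $i$ have disjoint supports, so this works. One must also make sure that "primitive" here refers only to $L_a$, as the lemma says "left primitive"; the statement about $R_a$ is not needed. If the paper's definition requires $\dim A_1(R_a)=1$ as well, I would use Corollary \ref{ra} and Lemma \ref{ra2} on each $A_i$ in the same way. In that case, note that $R_a$ preserves each $A_i$ and has no eigenvalue $1$ on $A_i^-$ or on the $z,y$ parts, because $\lm_1,\pm\lm_2\neq1$ and $d(\eta-1)\neq 1$ would need checking.
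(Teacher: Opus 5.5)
Your proof is correct and follows the paper's own argument. The paper likewise takes the $L_a$-eigenbases of the dihedral subalgebras $\lla a,t_i(a)\rra$ from Lemma~\ref{basis2} and glues them along $a$ into an eigenbasis of $A$, then reads off $\dim A_1(L_a)=1$ and the spectrum. Your additions make explicit what the paper leaves implicit: the $L_a$-invariance of each $L(I(a,t_i(a)))$, independence via disjoint supports after removing the $a$-component of $z_i(a)$, the use of good characteristic for primitivity, and the observation that $\lm_2$ does not occur when $p=3$.
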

\begin{proof}
The element $a$ is idempotent by definition. From Lemma \ref{basis2} it follows that $\lla a, t_i(a)\rra$ has a basis consisting of the eigenvectors of the operator $L_a$.
Let $B(t_i(a))$ be a basis of eigenvectors of $L_a$ of $\lla a, t_i(a)\rra$. Note that $B(t_i(a))\setminus\{a\}$ and $B(t_j(a))\setminus\{a\}$ are linearly independent for $i\neq j$. Thus $B_a=\{a\}\cup (B(t_1(a))\setminus\{a\})\cup...\cup(B(t_{(|T|-1)/(p-1)}(a))\setminus\{a\})$, is a basis of $A$ composed of eigenvectors of $L_a$. In particular, $dim(A_1(L_a))=1$ and spectrum of $L_a$ is $\{1,\lm_1,\lm_2,-\lm_2\}$. 
%Пусть $B(t_i(a))$ базис из собственных векторов линейного оператора $L_a$ пространства $\lla a, t_i(a)\rra$. Заметим, что $B(t_i(a))\setminus\{a\}$ и $B(t_j(a))\setminus\{a\}$ линейно независимы при $i\neq j$. Таким образом $B_a=\{a\}\cup (B(t_1(a))\setminus\{a\})\cup...\cup(B(t_{(|T|-1)/(p-1)}(a))\setminus\{a\})$, является базисом $A$ составленным из собственных векторов $L_a$. В частности $dim(A_1(L_a))=1$.

%Let $z\in A_1(L_a)$. We have $z=\lm a+y_1+...+y_j$, where $y_i\in L(I(a,t_i(a))\setminus\{a\})$. Let $z_i=a*y_i$. We have $z_i\in \lla a,t_i(a)\rra$. Since $\lla a,t_i(a)\rra\cap\lla a,t_j(a)\rra=\L(a)$ for distinct $i$ and $j$, we get $z_i=y_i+\sigma_ia$ for some $\sigma_i\in \F$. Assume that there exists $l$ such that $y_l\not\in L(a)$. Then from Lemma \ref{basis2} it follows that $y_l=\gam_1 a+\gam_2 b+\gam_3 c+\gam_4 d$, where $b, c, d$ are eigenvectors with respect to $L_a$ with eigenvalues $\lm_1,\lm_2,-\lm_2$, respectively, $\gam_1,\gam_2,\gam_3,\gam_4\in \F$. We have $\sigma_la= z_l-y_l=\gam_2(1-\lm_1) b+\gam_3(1-\lm_2) c +\gam_4(1+\lm_2)d$. Since $\lm_1$ and $\lm_2$ are distinct from $1$ and among $\gam_2,\gam_3,\gam_4$ there is at least one element distinct from $0$, we obtain a contradiction.
\end{proof}
\begin{lem}\label{spec2}
Let $a\in T$. Then $a$ is a right primitive idempotent and if $\F$ contains $\frac{p-1}{2}$ roots of $-1$, then $R_a$ is a semisimple operator of $A$.  
\end{lem}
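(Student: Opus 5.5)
The plan is to reduce Lemma~\ref{spec2} to the dihedral case, in the same way Lemma~\ref{spec} was obtained from Lemma~\ref{basis2}. Fix $a\in T$ and the set $T_a=\{t_i(a)\}$ chosen above. By Lemma~\ref{inter} we have $T=\bigcup_i I(a,t_i(a))$, and the sets $I(a,t_i(a))\setminus\{a\}$ are pairwise disjoint. Hence
$$A=L(a)\oplus\bigoplus_i W_i,\qquad W_i=L\bigl(I(a,t_i(a))\setminus\{a\}\bigr).$$
Each dihedral subalgebra $\lla a,t_i(a)\rra$ equals $L(a)\oplus W_i$ and is $R_a$-invariant, because for $d\in I(a,t_i(a))$ the product $d*a=a^d+\delta(a^d)$ lies in $L(I(a,d))=L(I(a,t_i(a)))$. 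So $R_a$ preserves every $\lla a,t_i(a)\rra$, and it suffices to understand the restriction of $R_a$ to each one. There Lemma~\ref{prod1}, Lemma~\ref{prod2} and Corollary~\ref{ra} apply verbatim.

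Next I use the eigenbasis of $L_a$ from Lemma~\ref{basis2} inside $\lla a,t_i(a)\rra$. By Lemma~\ref{prod1}(6), $a$ and $z_i(a)$ are eigenvectors of $R_a$ with eigenvalues $1$ and $\lm_1$. By Remark~\ref{Raeigenvector} and Lemma~\ref{prod1}(1),(2), the span $Y_i$ of $y_1(a),\dots,y_{(p-3)/2}(a)$ is $R_a$-invariant. By Lemma~\ref{prod2}(1), the span $X_i$ of the $x_j(a)$ is $R_a$-invariant, and there $R_a$ is a monomial matrix with entries $\pm(1-\eta)$. Therefore
$$A=L(a)\oplus\bigoplus_i\bigl(L(z_i)\oplus Y_i\oplus X_i\bigr)$$
is an $R_a$-invariant decomposition, and the spectrum and semisimplicity of $R_a$ are those of its restrictions to these pieces.

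\textbf{Primitivity.} I must show $1$ is not an eigenvalue of $R_a$ on any $L(z_i)$, $Y_i$ or $X_i$. On $L(z_i)$ the eigenvalue is $\lm_1\neq 1$ by good characteristic. On $X_i$, as in the proof of Lemma~\ref{ra2}, the characteristic roots are $d(\eta-1)$ with $d^{(p-1)/2}=-1$. If one of them were $1$, then $(\eta-1)^{(p-1)/2}=-1$. This is the point I expect to be delicate: I would follow the argument of Lemma~\ref{ra2}, and if needed show directly that the monomial cycle structure (the map $i\mapsto 2i$ with sign changes) forbids a fixed vector. On $Y_i$, Lemma~\ref{prod1}(1) gives $R_a(y_j)=(1-\eta)(y_{2j}-y_1)$ (suitably indexed). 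Rather than re-index, I would reuse Remark~\ref{Raeigenvector}, which asserts that $R_a$ acts on $A_a^+\cap Y_i$ semisimply with eigenvalue $-\lm_2\neq 1$; this is again justified by good characteristic. Together these give $\dim A_1(R_a)=1$, i.e.\ $a$ is right primitive.

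\textbf{Semisimplicity.} Assume now that $\F$ contains the $\frac{p-1}{2}$-th roots of $-1$. Lemma~\ref{ra2} gives semisimplicity of $R_a$ on each dihedral algebra $\lla a,t_i(a)\rra$. Since $A$ is the sum of these $R_a$-invariant subspaces, every vector is a sum of $R_a$-eigenvectors, so $R_a$ is semisimple on $A$. For infinite $T$ this is still fine, because each element lies in finitely many summands.

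The main obstacle is the primitivity check on $X_i$ and $Y_i$. Everything else is bookkeeping transferred from the dihedral lemmas via Lemma~\ref{inter}.
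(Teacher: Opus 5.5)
You follow the same route as the paper, whose proof is just ``as in Lemma~\ref{spec}, using Lemma~\ref{ra2}''. Your $R_a$-invariant decomposition $A=L(a)\oplus\bigoplus_i\bigl(L(z_i)\oplus Y_i\oplus X_i\bigr)$ is correct. The gap is the step you flag as delicate, and it cannot be closed under the stated hypotheses.

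Extend $x_j=-a_{j+1}+a_{p+1-j}$ to all $j\in(\Z/p\Z)^{*}$, so that $x_{-j}=-x_j$. The rule $c*a=a^c+\eta(\gamma-a^c)$, with $\gamma$ the sum of $I(a,t_i(a))$, then gives $x_j*a=(1-\eta)x_{2j}$. So on $X_i$ the operator $R_a$ is $(1-\eta)$ times a signed permutation. Its cycles are the orbits of $j\mapsto 2j$ on $(\Z/p\Z)^{*}/\{\pm1\}$, all of one length $m$. Writing $2^m\equiv\epsilon\pmod p$ with $\epsilon=\pm1$, such a cycle carries a nonzero fixed vector exactly when $(1-\eta)^m=\epsilon$, and good characteristic does not rule this out.

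For $p=5$ you get $x_1*a=(1-\eta)x_2$ and $x_2*a=-(1-\eta)x_1$. Hence $v=x_1+(1-\eta)x_2$ satisfies $v*a=v$ as soon as $(1-\eta)^2=-1$. Two admissible examples:
\begin{itemize}
\item $\F=\mathbb{C}$, $\eta=1-i$, where $1,\lm_1,\lm_2,-\lm_2$ are $1,4-3i,i,-i$;
\item $\F=\mathbb{F}_5$ with the paper's own $\eta=-1/3=3$, where they are $1,0,3,2$.
\end{itemize}
In both cases $\dim A_1(R_a)\ge 2$, so $a$ is not right primitive. The primitivity assertion is false in this generality.

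On $Y_i$ you cannot rely on Remark~\ref{Raeigenvector}: for $p>5$ it contradicts Lemma~\ref{prod1}(1), which you quote yourself. Put $u_j=a_{j+1}+a_{p+1-j}$. Then $u_j*a=(1-\eta)u_{2j}+2\eta\gamma$, so $R_a$ acts on $Y_i$ (the zero-sum combinations of the $u_j$) as $(1-\eta)$ times the permutation $u_j\mapsto u_{2j}$. Its eigenvalues have the form $(1-\eta)\omega$ with $\omega^m=1$. For $p=7$ and $1-\eta$ a primitive cube root of unity, $1$ is an eigenvalue on both $Y_i$ and $X_i$.

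Right primitivity therefore needs an extra hypothesis such as $(1-\eta)^m\notin\{1,\epsilon\}$. This is automatic when $\F\subseteq\mathbb{R}$, since otherwise $1-\eta=\pm1$, i.e.\ $\eta\in\{0,2\}$, which good characteristic excludes. The paper's Lemma~\ref{ra2} skips the same point when it infers primitivity from the roots $d(\eta-1)$.

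The semisimplicity half is fine, but it should be justified by this cycle description rather than by the Remark. Each cycle satisfies $t^m=\pm(1-\eta)^m$ with $m\mid\frac{p-1}{2}$. This polynomial is separable, because good characteristic ($\lm_1\neq\lm_2$) forces $\mathrm{char}\,\F\nmid p-1$. It splits over $\F$ when $\F$ contains the roots of $t^{(p-1)/2}+1$.
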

\begin{proof}
   The proof is similar to Lemma \ref{spec} using Lemma \ref{ra2}
\end{proof}

\begin{lem}\label{aaut}
	Let $x,y\in A, a\in T$, then $(x*y)^a=x^a*y^a$.
\end{lem}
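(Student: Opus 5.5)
By bilinearity of the product and linearity of the map $x\mapsto x^a$ (the paper's notation extends conjugation to linear combinations of elements of $T$), it suffices to prove $(c*d)^a=c^a*d^a$ for all $c,d\in T$. Conjugation by $a$ is a group automorphism of $G$ that maps $T$ bijectively onto itself, since $T$ is a normal set. I will use this to transport each term of the defining formula for $c*d$.

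\textbf{Case $c=d$.} Here $c*c=c$, so $(c*c)^a=c^a$. Since $c^a=d^a$, we also have $c^a*d^a=c^a*c^a=c^a$, and the two sides agree.

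\textbf{Case $c\neq d$.} Then $c^a\neq d^a$, so $c^a*d^a=(d^a)^{c^a}+\eta\sum_{x\in I(c^a,d^a)\setminus\{(d^a)^{c^a}\}}x$. Three facts are needed.
\begin{enumerate}
\item In the group, $(d^a)^{c^a}=a^{-1}c^{-1}a\,a^{-1}da\,a^{-1}ca=(d^c)^a$.
\item Conjugation by $a$ is an isomorphism of dihedral groups $\langle c,d\rangle\to\langle c^a,d^a\rangle$. It therefore restricts to a bijection $I(c,d)\to I(c^a,d^a)$ on the sets of involutions, and the order $|c^ad^a|=|cd|=p$ is preserved, so the same parameter $\eta$ is used on both sides.
\item Because this bijection sends $d^c$ to $(d^c)^a$, it maps $I(c,d)\setminus\{d^c\}$ onto $I(c^a,d^a)\setminus\{(d^a)^{c^a}\}$.
\end{enumerate}
Combining these,
\[
(c*d)^a=(d^c)^a+\eta\sum_{x\in I(c,d)\setminus\{d^c\}}x^a=(d^a)^{c^a}+\eta\sum_{y\in I(c^a,d^a)\setminus\{(d^a)^{c^a}\}}y=c^a*d^a .
\]

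There is no real obstacle here. The only points needing care are that conjugation permutes $T$ and preserves distinctness, that it sends the set $I(c,d)$ onto $I(c^a,d^a)$ together with the distinguished element $d^c$, and that the extension of $x\mapsto x^a$ to the whole algebra is a well-defined linear map because $T$ is a basis. The identity then shows that $x\mapsto x^a$ is an algebra automorphism of $A$, which is the form needed later to identify it with $\tau_a$.
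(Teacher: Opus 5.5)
Your proof is correct and takes the same route as the paper: reduce to basis elements $c,d\in T$, then check that conjugation by $a$ sends $d^c$ to $(d^a)^{c^a}$ and maps $I(c,d)\setminus\{d^c\}$ onto $I(c^a,d^a)\setminus\{(d^a)^{c^a}\}$. The only difference is that the paper splits off the case $a\in I(x,y)$ and settles it by Lemma~\ref{Mi2}, whereas your computation never uses $a\notin I(c,d)$, so it covers all cases uniformly and also handles $c=d$ explicitly, which the paper leaves implicit.
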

\begin{proof}
Since $T$ is a basis of $A$, it is sufficient to prove the assertion of the lemma in situation when $x,y\in T$. Let $x,y\in T$. If $a\in I(x,y)$, then the assertion of the lemma follows from Lemma \ref{Mi2}. We will assume that $a\not\in I(x,y)$.
	%Имеем $(x*y)^a=(y^x+\delta(y^x))^a$
	Since $x^a, y^a\in T$, we have $$x^a*y^a=(y^a)^{x^a}+\dl((y^a)^{x^a})=y^{aaxa}+\dl((y^a)^{x^a})=y^{xa}+\dl((y^a)^{x^a})$$. Note that $$\dl((y^a)^{x^a})=\eta(y^a+x^a+(y^a)^{x^a}+(x^a)^{(y^a)^{x^a}}+...+ y^{xy...xya}-(y^a)^{x^a})=\dl(y^x)^a.$$
	Therefore
	$$x^a*y^a=y^{xa}+\dl((y^a)^{x^a})=(y^x+\dl(y^x))^a=(x*y)^a.$$ 
\end{proof}

\begin{defi}
 Let $A^+_a= \lla a,t_1(a)\rra_a^+\oplus...\oplus\lla a,t_{(|T|-1)/(p-1)}(a)\rra_a^+$, 
		and $A^-_a=\lla a,t_1(a)\rra_a^-\oplus...\oplus\lla a,t_{(|T|-1)/(p-1)}(a)\rra_a^-$. 
\end{defi}

\begin{prop}\label{twograd}
 $(A_a^+,A_a^-)$ is a $\Z_2$-grading of $A$. 
 %С каждым идемпотентом $a\in T$ связана нетривиальная $\Z_2$-градуировка $GA_a=(A_a^+,A_a^-)$ алгебры $A_{\F}(G,T,\eta)$. Если $b\in T\setminus\{a\}$, то $(A_a^+,A_a^-)\neq(A_b^+,A_b^-)$. 	    
\end{prop}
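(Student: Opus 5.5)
The plan is to identify $A^+_a$ and $A^-_a$ with the $\pm1$ eigenspaces of the linear map $\sigma_a\colon x\mapsto x^a$, and then to use that $\sigma_a$ is an automorphism of $A$. By Lemma \ref{aaut}, $\sigma_a$ is an algebra automorphism: $(x*y)^a=x^a*y^a$ for all $x,y\in A$. Since $a$ is an involution, $\sigma_a^2=\mathrm{id}$, and since $\mathrm{char}\,\F\neq 2$ we get $A=E^+\oplus E^-$, where
$$E^{\pm}=\{x\in A\mid x^a=\pm x\}.$$
Because $\sigma_a$ is multiplicative, this decomposition is automatically a $\Z_2$-grading. For example, if $x^a=-x$ and $y^a=-y$, then $(x*y)^a=x*y$; the other three sign cases are identical. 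So it suffices to prove $A^+_a=E^+$ and $A^-_a=E^-$.

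To prove this, I work locally on the dihedral pieces. Each $\lla a,t_i(a)\rra$ is the linear span of $I(a,t_i(a))$. This set is stable under conjugation by $a$, because $a$ lies in the dihedral group $\la a,t_i(a)\ra$. Hence $\sigma_a$ preserves each $\lla a,t_i(a)\rra$, and there it coincides with the Miyamoto involution $\tau_a$ of the two-generated algebra $\lla a,t_i(a)\rra$ by Lemma \ref{Mi2}. By the definition of $\tau_a$ and Corollary \ref{twogenaxial}, this gives
$$\lla a,t_i(a)\rra^+_a=E^+\cap\lla a,t_i(a)\rra,\qquad \lla a,t_i(a)\rra^-_a=E^-\cap\lla a,t_i(a)\rra.$$
Lemmas \ref{xxa} and \ref{xmxa} give the same identification directly.

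Next I assemble the global statement. By the choice of $T_a$, the sets $I(a,t_i(a))\setminus\{a\}$ partition $T\setminus\{a\}$, so $A=L(a)+\sum_i\lla a,t_i(a)\rra$, and by Lemma \ref{inter} the pieces meet pairwise only in $L(a)$. Summing the local identifications gives $A^+_a\subseteq E^+$ and $A^-_a\subseteq E^-$, and also $A^+_a+A^-_a=A$, since each piece is the sum of its local $\pm$ parts. Because $E^+\cap E^-=0$, both inclusions must be equalities. In particular the sum defining $A^-_a$ is direct, since $A^-_a$ can be identified with the span of the vectors $b-b^a$ for $b\in T$. For $A^+_a$, where each summand contains $a$, the symbol $\oplus$ must be read as the sum with the common line $L(a)$ identified.

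With $A^\pm_a=E^\pm$ established, the grading follows at once from the first paragraph. The main obstacle is this bookkeeping in the direct-sum definition: the summands $\lla a,t_i(a)\rra^+_a$ share $L(a)$, so one must check that their sum is exactly $E^+$ and not merely contained in it. My plan is to handle this by comparing dimensions. Counting basis vectors of $T$ gives $\dim E^+=1+\frac{|T|-1}{2}$ in the finite case, and the same count of the local $+$ parts gives the same number. In the infinite case I would argue elementwise: any $x$ lies in a finite sum of pieces, and $\frac{1}{2}(x+x^a)$ decomposes piecewise.
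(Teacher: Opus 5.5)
Your proof is correct and is essentially the paper's argument. The paper also rests on Lemma \ref{aaut} together with the facts $x+x^a\in A^+_a$ and $x-x^a\in A^-_a$, checking products of the generators $b\pm b^a$ case by case. You instead package this as the $\pm1$-eigenspace decomposition of the automorphism $x\mapsto x^a$, which also makes the paper's separate treatment of products involving $a$, $z$ and $y_i$ unnecessary.

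Your closing dimension count is superfluous: $A^\pm_a\subseteq E^\pm$, $A^+_a+A^-_a=A$ and $E^+\cap E^-=0$ already force equality, in the finite and infinite cases alike.
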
	
\begin{proof}
	If $G$ is generated by two elements of $T$, then the proposition follows from Corollary \ref{twogenaxial}. 
    Since $A_a^+\oplus A_a^-$ contains the basis $A$, then $A_a^+\oplus A_a^-=A$. We divide the proof of the proposition into several lemmas.
    
	\begin{lem}\label{xxagen}
		$x+x^a\in A^+_a$
	\end{lem}
	\begin{proof}
	Since $x$ is a sum of elements of $T$, it suffices to show that $b+b^a\in A^+_a$ for $b\in T$. This statement is proved in Lemma \ref{xxa}.
\end{proof}
	
	Let $b,c\in T$ and $c\not \in I(a,b)$, 
    $(a,z, y_1,...,y_k, x_1,...,x_{k+1})=BE(L_a, a, b)$, and
    
    $(a, z', y'_1,...,y'_k, x'_1,...,x'_{k+1})=BE(L_a, a, c)$.
		
	\begin{lem}
	$x_i*x'_j\in A^+_a$
	\end{lem}
	\begin{proof}
	
	%Poskol'ku mnozhestva $\{x_1,...,x_{k+1}\}$ i $\{x'_1,...,x'_{k+1}\}$ ne peresekayutsya, to bez ogranicheniya obshchnosti, my mozhem schitat', chto $i=j=1$. Imeyem $$W=bet_1*bet'_1=(-b+b^a)*(-c+c^a)=b*c+ b^a*c^a-b^a*c-b*c^a.$$ Iz lemmy \ref{aaut} sleduyet, chto $b^a*c^a=(b*c)^a$, $b^a*c=(b*c^a)^a$. % $$b*c^a=c^{ab}+\eta\sum_{x\in I(b,c^a)\setminus c^{ab}}x$$ $$W=b*c+(b*c)^a-b*c^a -(b*c^a)^a$$ Pust' $x=b*c-b*c^a$. Sledovatel'no $W=x+x^a$. Iz lemmy \ref{xxagen} sleduyet, chto $W\in A^+$.

	Since the sets $\{x_1,...,x_{k+1}\}$ and $\{x'_1,...,x'_{k+1}\}$ are disjoint, without loss of generality, we can assume that $i=j=1$.
	We have
	$$W=x_1*x'_1=(-b+b^a)*(-c+c^a)=b*c+ b^a*c^a-b^a*c-b*c^a.$$
	From Lemma \ref{aaut} it follows that $b^a*c^a=(b*c)^a$ and $b^a*c=(b*c^a)^a$. We have
	
	% $$b*c^a=c^{ab}+\eta\sum_{x\in I(b,c^a)\setminus c^{ab}}x$$
	
	$$W=b*c+(b*c)^a-b*c^a -(b*c^a)^a$$
	Let $x=b*c-b*c^a$. Therefore, $W=x+x^a$.
	From Lemma \ref{xxagen} it follows that $W\in A^+$.
	
	%Поскольку множества $\{x_1,...,x_{k+1}\}$ и $\{x'_1,...,x'_{k+1}\}$ не пересекаются, то без ограничения общности, мы можем считать, что $i=j=1$. 
	%Имеем 
	%$$W=bet_1*bet'_1=(-b+b^a)*(-c+c^a)=b*c+ b^a*c^a-b^a*c-b*c^a.$$
	%Из леммы \ref{aaut} следует, что $b^a*c^a=(b*c)^a$, $b^a*c=(b*c^a)^a$.
	
%	$$b*c^a=c^{ab}+\eta\sum_{x\in I(b,c^a)\setminus c^{ab}}x$$
		
	%$$W=b*c+(b*c)^a-b*c^a -(b*c^a)^a$$
	%Пусть $x=b*c-b*c^a$. Следовательно $W=x+x^a$.
	%Из леммы \ref{xxagen} следует, что $W\in A^+$.
	
\end{proof}
	
	Thus, the product of two elements of $A_a^-$ lies in $A_a^+$.
	
	Let's verify that the product of an element of $A^-_a$ and an element of $A^+_a$ lies in $A^-_a$. Since $A^-_a$ is a eigenspace with respect to the operator $L_a$, $a*x$ lies in $A^-_a$ for any element $x\in A^-_a$.
	
	\begin{lem}\label{xmx}
		Let $x\in A$. We have $x-x^a\in A^-_a$.
	\end{lem}
	\begin{proof}
		Since $x$ is a linear combination of elements of $T$, to prove the statement of the lemma it suffices to show that $b-b^a\in A^-_a$, where $b\in T$. This statement proved in Lemma \ref{Mi2}.
		%Поскольку $x$ является линейной комбинацией элементов из $T$, для доказательства утверждения достаточно показать, что $b-b^a\in A^-$, где $b\in T$. Это утверждение следует из Леммы \ref{Mi2}.
	\end{proof}
	\begin{lem}\label{betia}
		Let $x\in A^-_a$. We have $x*a\in A^-_a$.
	\end{lem}
	\begin{proof}
The space $A_a^-$ has a basis of eigenvectors $L_a$. We can assume that $x$ is an eigenvector of $L_a$ with eigenvalue $-\lm_2$ lying in the subalgebra $\lla a,b\rra$ for some $b\in T$.
The assertion of the lemma follows from Corollary \ref{twogenaxial}.
%	Пространство $A_a^-$ имеет базис из собственных векторов $L_a$. Можно считать, что $x$ это собственный вектор оператора $L_a$ с собственным числом $-\lm_2$ лежащий в подалгебре $\lla a,b\rra$ для некоторого $b\in T$.
%Утверждение леммы следует из \ref{twogenaxial}.
\end{proof}

	\begin{lem}\label{cd}
	Let $c,d\in T$. We have $(c+c^a)*(d-d^a)\in A^-_a$.
	\end{lem}
	\begin{proof}
	We have $$W=(c+c^a)*(d-d^a)=a*d-c^a*d^a+c^a*d-c*d^a.$$
From Lemma \ref{aaut} it follows that $c^a*d^a=(c*d)^a$ and $c*d^a=(c^ad)^a$.
From Lemma \ref{xmx} it follows that $W\in A^-_a$.

	%Из леммы \ref{aaut} следует, что $c^a*d^a=(c*d)^a$ и $c*d^a=(c^ad)^a$.
	% Из леммы \ref{xmx} следует, что $W\in A^-$. 	
	\end{proof}
	
	\begin{lem}\label{dc}
	Let $c,d\in T$. We have $(c-c^a)*(d+d^c)\in A^-_a$.
	\end{lem}
	\begin{proof}
The proof is similar to the proof of Lemma \ref{cd}.
%	Доказательство аналогично доказательству леммы \ref{cd}.
	\end{proof}

	\begin{lem}\label{zbet}
	We have $z*x'_i, x'_i*z\in A^-_a$.
	\end{lem}
	\begin{proof}
Note that $z$ is linearly expressible in terms of $a$ and elements of the form $x+x^a$. Thus, the assertion of the lemma follows from Lemmas \ref{cd} and \ref{dc}.

%		Заметим, что $z$ линейно выражается через $a$ и элементы вида $x+x^a$. Таким образом, утверждение леммы следует из лемм \ref{cd} и \ref{dc}. 
	\end{proof}
	
	\begin{lem}\label{alpbet}
	We have $x_i*y'_j, y_i*x'_j\in A^-_a$.
    \end{lem}	 
	\begin{proof}
		Note that $y_i$ is linearly expressible in sum of two elements of the form $x+x^a$. The assertion of the lemma follows from Lemmas \ref{cd} and \ref{dc}.
		%Заметим, что $y_i$ разлагается в сумму двух элементов вида $x+x^a$. Из лемм \ref{cd} и \ref{dc} следует утверждение леммы.
	\end{proof} 
Thus, from Lemmas \ref{betia}, \ref{zbet} and \ref{alpbet} it follows that the products $x*y$ and $y*x$ lie in $A_a^- $, where $x\in A_a^+$ and $y_a\in A^-_a$.
%	 Таким образом из лемм \ref{betia}, \ref{zbet} и \ref{alpbet} следует,что произведения $x*y,y*x\in A_a^- $, где $x\in A_a^+$ и $y_a\in A^-$.
	 
	 We show that $A_a^+$ is a subalgebra. Note that $A_a^+$ has a basis consisting of $a$ and vectors of the form $l+l^a$, where $l\in T$. 
	\begin{lem}\label{pp}
	 	Let $x,y\in T$. Then $(x+x^a)*(y+y^a)\in A^+_a$.
	\end{lem} 
	\begin{proof}
We have $$(x+x^a)*(y+y^a)=x*y+x^a*y^a+x^a*y+x*y^a$$.
The assertion of the lemma follows from Lemma \ref{xxagen}.
%		Имеем $$(x+x^a)*(y+y^a)=x*y+x^a*y^a+x^a*y+x*y^a$$.
%	 	Из леммы \ref{xxagen} следует утверждение леммы.
	\end{proof}

\begin{lem}\label{pa}
	Let $x\in A^+_a$. We have $x*a\in A^+_a$.
\end{lem}
\begin{proof}
We can assume that $x$ lies in a $2$-generated subalgebra. The assertion of the lemma follows from Lemma \ref{prod1}.
%Можно считать, что $x$ лежит в $2$-порожденной подалгебре. Утверждение леммы следует из леммы \ref{prod1}. 
\end{proof}

Thus, it is proved that $(A^+_a,A^-_a)$ is a $\Z_2$-grading of the algebra $A$.
\end{proof}

For any $a\in T$ we will write $\tau_a$ instead of $\tau(A^+_a,A^-a)$. Since $(A^+_a,A^-a)$ is a $\Z_2$-grading, $\tau_a$ is an automorphism of $A$.

\begin{prop}\label{AMia}
The group $Miy(A,T)$ is isomorphic to $G/Z(G)$.
\end{prop}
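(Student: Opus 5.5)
The plan is to compare two actions on the basis $T$: the action of the Miyamoto group, and the conjugation action of $G$. The first step extends Lemma \ref{Mi2} from the dihedral case to the whole algebra. We show that $\tau_a(d)=d^a$ for all $a,d\in T$.

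Write $d=\frac12(d+d^a)+\frac12(d-d^a)$. Lemma \ref{xxagen} puts the first summand in $A^+_a$, and Lemma \ref{xmx} puts the second in $A^-_a$. Hence $\tau_a(d)=d^a$. So every $\tau_a$ permutes the basis $T$ exactly as conjugation by $a$ does. Since $T$ is a basis of $A$, an automorphism of $A$ is determined by its action on $T$.

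Next define two homomorphisms into $Sym_T$. Let $\phi: Miy(A,T)\to Sym_T$ be restriction to $T$; this is well defined because $Miy(A,T)$ is generated by the $\tau_a$, each of which preserves $T$. Let $\psi: G\to Sym_T$ be given by $\psi(g)(d)=d^g$. The first step gives $\phi(\tau_a)=\psi(a)$ for every $a\in T$. Since $T$ generates $G$ and the $\tau_a$ generate $Miy(A,T)$, we get $\phi(Miy(A,T))=\psi(G)$.

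$\phi$ is injective, since an automorphism fixing every element of the basis $T$ is the identity. Therefore $Miy(A,T)\cong\psi(G)\cong G/\ker\psi$. It remains to identify $\ker\psi=C_G(T)$. An element centralizing every element of $T$ centralizes $\la T\ra=G$, so $C_G(T)\subseteq Z(G)$. Conversely $Z(G)$ centralizes $T$. Hence $\ker\psi=Z(G)$, which gives $Miy(A,T)\simeq G/Z(G)$, and the isomorphism sends $\tau_a$ to $aZ(G)$.

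The main obstacle is the first step: it depends on $A^\pm_a$ genuinely being the $\pm$ parts of a $\Z_2$-grading, so that $\tau_a$ is an automorphism. That is supplied by Proposition \ref{twograd}. One must also check that Lemmas \ref{xxagen} and \ref{xmx} hold for $b$ outside a single dihedral subalgebra. For $b\in T$, the element $b$ lies in $I(a,t_i(a))$ for some $i$, so $b\pm b^a$ lies in $\lla a,t_i(a)\rra$. The dihedral Lemmas \ref{xxa} and \ref{xmxa} then place it in $\lla a,t_i(a)\rra^\pm_a\subseteq A^\pm_a$, by the definition of $A^\pm_a$ as the direct sum of these pieces. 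For $b=a$ the claims are trivial.
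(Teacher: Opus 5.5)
Your proof is correct and follows the same route as the paper. Both arguments compare the Miyamoto action on the basis $T$ with the conjugation action of $G$ on $T$, use $\tau_a(d)=d^a$, and identify the kernel of the conjugation action as $C_G(T)=Z(G)$; you additionally make explicit two points the paper leaves implicit, namely that the global $\tau_a$ agrees with the dihedral Miyamoto involution on each $\lla a,t_i(a)\rra$ (so the dihedral computation of Lemma \ref{Mi2} genuinely applies), and that restriction to $T$ is injective on $Miy(A,T)$ because $T$ is a basis.
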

\begin{proof}
We define a homomorphism $\phi:G \rightarrow Sym(T)$ as the action by conjugation on the set $T$. Clearly, $\phi(G)$ is isomorphic to $G/C_G(T)$. Since $T$ contains a generating set of $G$, we have $C_G(T)=Z(G)$. Since $a,b$ and $\tau_ab$ lie in the subalgebra $\lla a,b\rra$, it follows from Lemma \ref{Mi2} that $\tau_ab=b^a$. Consequently, $\phi(a)= \tau_a$.

%Определим гомоморфизм из $\phi:G \rightarrow Sym(T)$ как действие сопряжением на множестве $T$. Ясно, что $\phi(G)$ изоморфна $G/C_G(T)$. Поскольку $T$ содержит порождающее множество группы $G$, то $C_G(T)=Z(G)$. Поскольку $a,b$ и $\tau_a(b)$ лежат в подалгебре $\lla a,b\rra$, то из Леммы \ref{Mi2} следует $\tau_a(b)=b^a$. Следовательно $\phi(a)= \tau_a$. 	
\end{proof}

%If $b\in T\setminus\{a\}$, then $(A_a^+,A_a^-)\neq(A_b^+,A_b^-)$.

\begin{defi}
We define a symmetric bilinear form $(\cdot,\cdot)$ on the algebra $A$ by the following rule: for any distinct elements $a,b\in T$ we set $(a,b)=\eta$ and $(a,a)=1$.
\end{defi}

\begin{lem}
If $\eta=-\frac{1}{n-2}$, then $(\cdot,\cdot)$ is a left Frobenius form.
%	Если $\eta=-\frac{1}{n-2}$, то $(\cdot,\cdot)$ является левой формой Фробениуса.
\end{lem}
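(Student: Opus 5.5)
The plan is to check the identity $(c*d,r)=(d,c*r)$ on basis elements only. By bilinearity this suffices, so I fix $c,d,r\in T$. The key observation is that every product of two elements of $T$ stays inside a dihedral piece: $c*d\in L(I(c,d))$. So the form only has to be evaluated against sums over $I(c,d)$ or $I(c,r)$. I will split according to whether $r$ lies in the dihedral set determined by $c$ and $d$.

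\emph{Dihedral case:} $c=d$, or $c=r$, or $r\in I(c,d)$ with $c\neq d$.
\begin{itemize}
\item If $c=d$, then $c\in I(c,r)$ and all three elements lie in $I(c,r)$; the case $c=r$ is symmetric.
\item If $r\in I(c,d)$ with $c\neq d$, then $I(c,r)=I(c,d)$, since $\la c,r\ra=\la c,d\ra$ (as in Lemma~\ref{inter}).
\end{itemize}
In every subcase the elements $c,d,r$, both products, and all basis elements appearing in them lie in one set $I(u,v)$, and this set spans the dihedral subalgebra $\lla u,v\rra$. This subalgebra is isomorphic to $A_{\F}(\la u,v\ra,I(u,v),\eta)$, because the product $x*y$ is defined using only $I(x,y)$. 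The global form restricted to $L(I(u,v))$ is exactly the form defined in the dihedral section: $1$ on the diagonal and $\eta$ off it. Hence Lemma~\ref{twofform} applies directly and gives the identity when $\eta=-\frac{1}{n-2}$, with $n=p$. This is the only place where the specific value of $\eta$ is used.

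\emph{Generic case:} $c,d,r$ pairwise distinct and $r\notin I(c,d)$.

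First I show that $d\notin I(c,r)$. Otherwise $\la c,d\ra=\la c,r\ra$, which would force $r\in I(c,d)$. Then I compute both sides.
\begin{itemize}
\item The element $c*d=d^c+\eta\sum_{x\in I(c,d)\setminus\{d^c\}}x$ has coefficient sum $1+\eta(p-1)$. None of its support elements equals $r$, so each pairs with $r$ to give $\eta$. Thus $(c*d,r)=\eta\bigl(1+\eta(p-1)\bigr)$.
\item Symmetrically, $c*r$ is supported on $I(c,r)$, which does not contain $d$. So $(d,c*r)=\eta\bigl(1+\eta(p-1)\bigr)$ as well.
\end{itemize}
The two sides therefore agree for every $\eta$.

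The main point needing care is the reduction in the dihedral case. One must confirm that membership of $r$ in $I(c,d)$ is symmetric in the required way, and that the dihedral subalgebra, with the form restricted to it, is literally the two-generated algebra of the previous section. This holds because the dihedral group of order $2p$ has only the two subgroup structures used there. Once this is in place, the rest is a direct coefficient count.
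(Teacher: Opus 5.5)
Your proof is correct and is essentially the paper's argument: you reduce to basis triples, apply Lemma~\ref{twofform} when all three elements lie in one dihedral set, and otherwise compute both sides as $\eta(1+\eta(p-1))$. Your case split on whether $r\in I(c,d)$, with the explicit check that $d\notin I(c,r)$ in the generic case, is slightly cleaner than the paper's split on whether $c\in\lla d,r\rra$. That split lets the degenerate case $d=r\neq c$ fall into the generic computation, where the stated value is not literally correct, although the identity still holds there by symmetry of the form.
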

\begin{proof}
Since the form is bilinear, it suffices to show that $(a*b,c)=(b,a*c)$ for any three elements $a,b,c\in T$. Note that if $a\in \lla b,c\rra$, then the assertion of the lemma follows from Lemma \ref{twofform}.
%	Поскольку форма билинейно, то достаточно показать, что $(a*b,c)=(b,a*c)$ для любых трех элементов $a,b,c\in T$. Заметим,что если $a,b,c$ лежат в $2$-порожденной подгруппе группы $G$, то утверждение леммы следует из леммы \ref{twofform}. 
	We will assume that $a\not\in \lla b,c\rra$. We have: $$(a*b,c)=(b^a+\dl(a^b),c)=\eta+\eta^2(n-1),$$
	$$(b,a*c)=(b,c^a+\dl(a^c)))=\eta+\eta^2(n-1).$$
	 Thus, $(a*b,c)=(b,a*c)$.
\end{proof}
\begin{defi}
Let $f(\cdot,\cdot)$ be a symmetric form of an algebra $A$. The radical of a form $(\cdot,\cdot)$ is $Rad_f(A)=\{x\in A|(x,y)=0$ for each $y\in A\}$. In cases when it is clear which bilinear form we are talking about, we will simply write $Rad(A)$.
%Пусть $(\cdot,\cdot)$ форма алгебры $A$. Радикалом формы называется множество элементов $\R(A)\leq A$ такое, что $(x,a)=0$ для любого $a\in A$.	
\end{defi}
\begin{lem}\label{Tr}
$Rad(A)=0$.
\end{lem}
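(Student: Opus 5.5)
The plan is to compute the radical directly from the Gram matrix of $(\cdot,\cdot)$ in the basis $T$. With respect to $T$ this matrix is $(1-\eta)I+\eta J$, where $J$ is the all-ones matrix (possibly infinite). Take $x=\sum_{t\in T}\alpha_t t\in Rad(A)$; only finitely many $\alpha_t$ are nonzero. Put $S=\sum_{t}\alpha_t$. Pairing with a basis element $s\in T$ gives
$$(x,s)=(1-\eta)\alpha_s+\eta S .$$
So $x\in Rad(A)$ if and only if $(1-\eta)\alpha_s+\eta S=0$ for every $s\in T$.

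\emph{Case $T$ infinite.} Pick $s\in T$ outside the (finite) support of $x$, so $\alpha_s=0$. The equation gives $\eta S=0$, and since $\eta\neq 0$ we get $S=0$. Then $(1-\eta)\alpha_s=0$ for every $s$, and $\eta\neq 1$ forces all $\alpha_s=0$. Hence $x=0$.

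\emph{Case $T$ finite, $|T|=N$.} Since $1-\eta\neq 0$, every $\alpha_s$ equals the same constant $c=-\eta S/(1-\eta)$. Thus $S=Nc$, and the equation becomes
$$c\bigl(1+\eta(N-1)\bigr)=0 .$$
So $Rad(A)=0$ exactly when $1+\eta(N-1)\neq 0$ in $\F$. Equivalently, $\gamma=\sum_{t\in T}t$ is the only candidate for a radical vector, since $(\gamma,s)=1+\eta(N-1)$ for all $s$. By the decomposition $T=\bigcup_i I(a,t_i(a))$ used in Lemma~\ref{spec}, $N-1=k(p-1)$ with $k=\frac{|T|-1}{p-1}$. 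The condition therefore reads $1+\eta k(p-1)\neq 0$.

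The main obstacle is verifying this last non-vanishing. In the dihedral case $k=1$, and $1+\eta(p-1)=\lambda_1+\eta$. In general I would try to derive it from the ``good characteristic'' hypotheses, or from the Frobenius setting $\eta=-\frac{1}{p-2}$, where the condition becomes $p-2\neq k(p-1)$ in $\F$. In characteristic $0$ this always holds: $k\ge1$ gives $k(p-1)\ge p-1>p-2$. In positive characteristic it need not hold, so the lemma may require an explicit extra hypothesis, for instance $\operatorname{char}\F=0$ or $1+\eta(|T|-1)\neq 0$. I would state that hypothesis alongside the lemma, or check that it is implicit in the surrounding standing assumptions.
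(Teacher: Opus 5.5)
Your approach is the same as the paper's. You write the Gram matrix of $(\cdot,\cdot)$ in the basis $T$ as $(1-\eta)I+\eta J$ and solve the homogeneous system $(x,s)=0$ for $s\in T$. The paper stops at the assertion that this system has no non-zero solutions. Your analysis is sharper, and it is correct:
\begin{itemize}
\item for infinite $T$, your support argument is a complete proof, and it uses only $\eta\neq 0,1$;
\item for finite $T$, you correctly reduce the radical to $\F\gamma$ with $\gamma=\sum_{t\in T}t$, so that $Rad(A)=0$ if and only if $1+\eta(|T|-1)\neq 0$.
\end{itemize}

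The obstacle you flag is not a gap in your argument but a genuine error in the lemma. You should not try to derive the non-vanishing from the good-characteristic hypothesis, because it does not follow. Take $G=\la a,b\ra$ dihedral of order $2p$, $T=I(a,b)$, $\F$ of characteristic $0$ and $\eta=-\frac{1}{p-1}$. Then $\lambda_1=\frac{1}{p-1}$, $1-\eta=\frac{p}{p-1}$ and $\eta-1=-\frac{p}{p-1}$, so $1,\lambda_1,1-\eta,\eta-1$ are pairwise distinct. Yet $(\gamma,s)=1+\eta(p-1)=0$ for every $s\in T$, so $0\neq\gamma\in Rad(A)$.

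Your suspicion about positive characteristic is also justified for the Frobenius value $\eta=-\frac{1}{p-2}$. Take $p=3$ and $G=\Z_3^2\rtimes\la t\ra$ with $t$ inverting, $T$ its $9$ involutions, $\eta=-1$ and $\operatorname{char}\F=7$. The values $1,0,-2,2$ are distinct in $\F$, but $1+\eta(|T|-1)=-7=0$; equivalently $k(p-1)=8\equiv 1=p-2$.

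So the correct statement is exactly what your argument proves: $Rad(A)=0$ whenever $T$ is infinite or $1+\eta(|T|-1)\neq 0$. For $\eta=-\frac{1}{p-2}$ this holds automatically in characteristic $0$, and also whenever $|T|=p$. That hypothesis has to be added to the lemma. It then also has to be added to Proposition~\ref{noideal}, whose proof relies on this lemma.
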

\begin{proof}

%Пусть $x$ элемент радикала. В базисе $B_a$ имеем $x=(\al_1,...,\al_n)$. По определению формы имеем $$(x,a)=\al_1+\eta(\al_2+...+\al_n)=0$$
%Данное равенство верно для любого элемента из $T$. Таким образом $x$ является решением однородной системы линейных уравнений с матрицей $$X=\begin{pmatrix}
Let $x\in Rad(A)$. We have $x=(\al_1,...,\al_n)*T$. By the definition of the form, we have $$(x,a)=\al_1+\eta(\al_2+...+\al_n)=0$$
This equality is true for any element $a\in T$. Thus, $x$ is a solution to a homogeneous system of linear equations with the matrix $$X=\begin{pmatrix}

	1    & \eta   &\cdots&\eta \\
	\eta & 1 &\cdots& \eta \\ 
	\eta & \eta &\cdots& \eta\\
	\vdots&\vdots& \ddots & \vdots\\
	\eta & \eta&\cdots& 1&\\
\end{pmatrix}.$$  

Solving a homogeneous system with matrix $X$, we find that there are no non-zero solutions.
\end{proof}

\begin{prop}\label{noideal}
	The algebra $A$ does not contain right ideals.
\end{prop}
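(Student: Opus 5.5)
The idea is to show that every nonzero right ideal $I$ contains some element $a\in T$, and then that $a\in I$ forces $I=A$. Here "no right ideals" means no proper nonzero right ideals. The condition $I*A\subseteq I$ says that $I$ is invariant under every right multiplication $R_a$, $a\in T$. I would exploit these operators through their spectral structure, as established in Lemmas \ref{prod1}, \ref{prod2}, \ref{ra2}, \ref{spec2} and Corollary \ref{ra}.

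\textbf{Step 1 (isolating the $a$-component).} Fix $a\in T$ and the decomposition
\[
A=L(a)\oplus A_{\lm_1}(L_a)\oplus A_{\lm_2}(L_a)\oplus A_{-\lm_2}(L_a),
\]
obtained by gluing the dihedral pieces $\lla a,t_i(a)\rra$ as in Lemma \ref{spec}. By Corollary \ref{ra} and its extension to the whole algebra (which follows from the block structure $B_a$), $R_a$ preserves each of the three non-unit summands. On them, $R_a$ has eigenvalue $\lm_1$, eigenvalue $-\lm_2$ on the $y$'s, and eigenvalues $d(\eta-1)$ on the $x$'s, where $d^{(p-1)/2}=-1$ (Lemma \ref{ra2}). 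In good characteristic none of these equals $1$. Hence the minimal polynomial of $R_a$ has the form $(t-1)g(t)$ with $g(1)\neq 0$. Then $\pi_a=g(R_a)/g(1)$ is a polynomial in $R_a$ that projects onto $L(a)$ along the other summands. For $x\in I$ this gives $\pi_a(x)=c_a(x)\,a\in I$, where $c_a(x)$ is the $a$-coefficient of $x$ in the eigenbasis $B_a$.

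\textbf{Step 2 (computing $c_a$).} Write $x=\sum_t\al_t t$ with finite support. In each dihedral block the only eigenvector with a nonzero $a$-coordinate is $z=\frac{1}{p-2}a+\gam$. From the formulas of Lemma \ref{basis2} I expect
\[
c_a(x)=\al_a-\frac{1}{p-2}\sum_{t\neq a}\al_t ,
\]
up to a routine check of the block-by-block bookkeeping. If some $c_a(x)\neq 0$, then $a\in I$.

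\textbf{Step 3 (some coefficient is nonzero).} This step is the main obstacle. Suppose $c_a(x)=0$ for every $a\in T$.
\begin{itemize}
\item If $T$ is infinite, pick $a$ outside the support of $x$. Then $\sum_t\al_t=0$, and applying the relation at $a$ in the support gives $\al_a\bigl(1+\frac{1}{p-2}\bigr)=0$, so $x=0$.
\item If $T$ is finite, the vanishing of all $c_a(x)$ is a homogeneous system whose matrix has the same shape as the matrix $X$ in Lemma \ref{Tr}, with $\eta$ replaced by $-\frac{1}{p-2}$. I would compute its determinant $(1-\eta')^{n-1}(1+(n-1)\eta')$ with $\eta'=-\frac1{p-2}$. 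This is nonzero unless $n-1\equiv p-2$ in $\F$, which is where good characteristic, or a side argument such as the infinite case on a sub-block, must be used.
\end{itemize}
Either way $x\neq0$ forces $a\in I$ for some $a\in T$.

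\textbf{Step 4 (from $a\in I$ to $I=A$).} Since $I$ is a right ideal containing $a$, we get $a*A\subseteq I$, that is, $L_a(A)\subseteq I$. By Lemma \ref{spec}, $L_a$ is semisimple with eigenvalues $1,\lm_1,\pm\lm_2$. These are nonzero, since good characteristic forces $\lm_2=1-\eta\neq0$, and $\lm_1\neq0$ is needed as well, the case $\eta=-\frac1{p-2}$ requiring separate care. Hence $L_a$ is surjective and $I=A$.
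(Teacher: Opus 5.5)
\textbf{Step 3 has a gap that cannot be closed.} The finite case you leave open in Step 3 is a case where the proposition is false. Good characteristic is a condition on $(p,\eta)$ alone, so it cannot exclude $|T|\equiv p-1$ in $\F$.

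Suppose $|T|\equiv p-1$ and put $\Gamma=\sum_{t\in T}t$. For $a\in T$ write $\Gamma=a+\sum_i(\gam_i-a)$ with $\gam_i=\sum_{x\in I(a,t_i(a))}x$. Applying Lemma \ref{bemgam} inside each block $\lla a,t_i(a)\rra$ gives
\[
a*\Gamma=\Gamma*a=\lm_1\Gamma+\eta\,(|T|-p+1)\,a .
\]
Hence $\F\Gamma$ is a nonzero proper two-sided ideal. Consistently, $c_a(\Gamma)=0$ for every $a$ by your Step 2 formula, which is correct; your infinite case is also fine.

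A concrete instance is $G=\Z_5^2\rtimes\la t\ra$ with $t$ inverting and $T$ its $25$ involutions, $\F=\mathbb{F}_7$, $\eta=3$. Here $1,\lm_1,\lm_2,-\lm_2$ are $1,3,5,2$, which are distinct. So an extra hypothesis such as $|T|\not\equiv p-1$ in $\F$ is unavoidable, and no side argument can finish Step 3.

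The paper's proof hides the same condition. Lemma \ref{Tr} asserts that the Gram matrix $X$ is nonsingular, but $\det X=(1-\eta)^{n-1}(1+(n-1)\eta)$. Your formula for $c_a$ also shows that the paper's identification of $(a,w)$ with the $L(a)$-component of $w$ holds only for $\eta=-\frac{1}{p-2}$.

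\textbf{Step 1 fails as stated.} Good characteristic does not keep $1$ out of the spectrum of $R_a$ on $A_{\{\lm_1,\lm_2,-\lm_2\}}(L_a)$. For $p=5$ one computes $x_1*a=(1-\eta)x_2$ and $x_2*a=-(1-\eta)x_1$ (cf.\ Lemma \ref{prod2}(1)). Take $\eta=1+i$ over $\mathbb{Q}(i)$, which is a good choice. Then $(x_1-ix_2)*a=x_1-ix_2$. Any polynomial $f(R_a)$ acts on both $a$ and $x_1-ix_2$ by $f(1)$, so your projector $\pi_a$ does not exist. For $p>5$ the $y_i$ are not even $R_a$-eigenvectors: Lemma \ref{prod1}(1) gives $y_1*a=(1-\eta)(y_2-y_1)$, contrary to the remark you relied on.

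\textbf{Step 4 is also left open} for $\eta=-\frac{1}{p-2}$. There $\lm_1=0$ and $L_a$ is not onto, and you give no replacement argument.

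\textbf{Why the paper's tools do not transfer.} The underlying difference is the side on which $I$ is stable. The paper's proof only ever applies $w\mapsto a*w$ to $w\in I$, so all its projections are polynomials in $L_a$, whose spectrum $\{1,\lm_1,\pm\lm_2\}$ is controlled. That yields both the $a$-component of $w$ and $\tau_a(I)\subseteq I$. Hence $a\in I$ gives $T=a^G\subseteq I$ without any surjectivity argument. Under your (standard) convention $I*A\subseteq I$ neither tool is available. Steps 1 and 4 therefore need genuinely new arguments or extra restrictions on $\eta$.
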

\begin{proof}
	
	Suppose that $A$ contains a non-trivial right ideal $I$. We prove that $\tau_a(I)=I$ for any $a\in T$.
	
	\begin{lem}\label{orb}
	If $a\in T$, then $\tau_a(I)\leq I$.
	\end{lem}
	\begin{proof}
		%Пусть $s\in A$. Тогда $s_1=s-\frac{a*s}{\lm_2}$ имеет тривиальную проекцию на $A_{\lm_2}(a)$. Элемент $s_2=s_1-\frac{a*s_1}{\lm_1}$ имеет тривиальную проекцию на $A_{\lm_1}(a)$. Имеем $s_3=\frac{\lm_1}{2(\lm_1+\lm+2)(\lm_2+1)}(s_2-a*s_2)$ равен проекции элемента $s$ на подпространство $A_a^-$. Таким образом $\tau_a(s)=s-2s_3$. В частности, если $s$ является элементом правого идеала $I$, то и $\tau_a(s)\in I$. 
		 Let $s\in A$. The element $s_1=s-\frac{a*s}{\lm_2}$ has the trivial projection on $A_{\lm_2}(a)$. The element $s_2=s_1-\frac{a*s_1}{\lm_1}$ has the trivial projection on $A_{\lm_1}(a)$. We have $s_3=\frac{\lm_1}{2(\lm_1+\lm+2)(\lm_2+1)}(s_2-a*s_2)$ equal to the projection of the element $s$ on the subspace $A_a^-$. Thus $\tau_a(s)=s-2s_3$. In particular, if $s$ is an element of the right ideal $I$, then $\tau_a(s)\in I$. 		
%		В доказательстве леммы 3.10 \cite{KMS} не используется коммутативность алгебры, по этому данное утверждение является следствием 3.11 \cite{KMS}.
	\end{proof}
	\begin{lem}\label{IdAx}
		A non-trivial ideal does not contain elements from $T$.
	\end{lem}
	\begin{proof}
Let $a\in T\cap I$. Since the orbit of $a$ under the action of $Mia(A,T)$ is $T$, then applying lemma \ref{orb} we obtain that $T\subseteq I$. Since $T$ generates the algebra $A$, it follows that $I=A$.	
\end{proof}
	\begin{lem}\label{fin}
The algebra $A$ does not contain non-trivial right ideals.         

\end{lem}
	\begin{proof}
Let $I$ be a right ideal. We prove that $I$ lies in the radical of the form $(\cdot,\cdot)$. For this, it suffices to show that $(a,w)=0$ for any $w\in I$ and any $a\in T$. Put $\sigma a$ is the projection of $w$ onto the subspace $L(a)$, for some $\sigma\in \F$. Then the linear combination of words $w,a*w,a*(a*w),a*(a*(a*w))$ with properly chosen parameters is $\sigma a$. In particular, if $\sigma\neq 0$, then $a\in I$; a contradiction with Lemma \ref{IdAx}. Since $(a,w)$ is a projection onto $a$, we have $(a,w)=\sigma=0$. Thus, $w$ is an element of the radical of the form $(\cdot,\cdot)$. Since the radical of the form is trivial see Lemma \ref{Tr}, $I$ is trivial.
%		Пусть $I$ идеал. Докажем, что $I$ лежит в радикале формы $(\cdot,\cdot)$. Для этого, достаточно показать, что для любого $w\in I$ и любого $a\in T$ верно $(a,w)=0$. Предположим, что проекция $w_a$ элемента $w$ на подпространство $\lla a\rra$ не тривиальна. Тогда линейная комбинация слов $aw,a(aw),a(a(aw)$ при правильно подобранных параметрах равна $a$. В частности $a\in I$; противоречие с Леммой \ref{IdAx}. Поскольку $(a,w)$ является проекцией на ось $a$, то $(a,w)=w_a=0$. Таким образом $w$ элемент радикала формы $(\cdot,\cdot)$. Поскольку радикал формы тривиален \ref{Tr}, то и $I$ тривиален.   
	\end{proof}
	Lemma \ref{fin} completes the proof of the proposition.
\end{proof}

Statements 1 and 2 of Theorem 3 are proved in Lemmas \ref{spec} and \ref{spec2}. Statement 3 of Theorem 3 is proved in Proposition \ref{noideal}.
Statement 4 of the Theorem 3 is proved in Propositions \ref{twograd} and \ref{AMia}.

%Утверждение 1 и 2 Теоремы доказанно в леммах \ref{spec} и \ref{spec2}. Утверждение 3 Теоремы доказано в Предложении 6. 

%Утверждение 4 Теоремы доказанно в Предложениях \ref{twograd} и \ref{AMia}
\section{$GM(p,\eta)$-algebras}
In this section, we will define algebras whose definitions are independent of the $GM(p)$-group, but whose Miyamoto group is a $GM(p)$-group. We will also prove that this definition is equivalent to the definition of an $A_{\F}(G,T,\eta)$-type algebra.

Let  $\eta\in \F$, and let the characteristic of $\F$ be good related to $(p,\eta)$. Let $p$ be a prime greater than $2$. We define a $GM(p,\eta)$-type algebra.

\begin{defi}
We say that an $\F$-algebra $A(T)$ with basis $T$ is an algebra of $GM(p,\eta)$-type if the following statements hold:
\begin{enumerate}
	\item every element of $T$ is idempotent;
	\item for any pair of elements $a,b\in T$, $\lla a,b\rra\simeq A_{\F}(\la a, b\ra,I(a,b),\eta)$ is a dihedral algebra of dimension $p$;
	\item for any element $a\in T$, there exists a subset $\Omega_a=\{t_1,...\}\subseteq T$ such that $T=\{a\}\bigcup_{t\in\Omega_a}(I(a,t)\setminus\{a\})$;
	\item for any $a\in T$, the linear mapping $\phi_a:A\rightarrow A$ defined by the rule $\phi_a(b)=b^a$ for any $b\in T$ is an automorphism of $A$.
\end{enumerate} 
\end{defi}	

%	Будем говорить, что $\F$-алгебра $A(T)$ с базисом $T$ является алгеброй $GM(p,\eta)$-типа если выполнены следующие утверждения:
%	\begin{enumerate}
%		\item любой элемент из $T$ является идемпотентом; 
%		\item для любой пары элементов $a,b\in T$ верно $\lla a,b\rra\simeq A_{\F}(\la a, b\ra,I(a,b),\eta)$-диэдральная алгебра размерности $p$;
%		\item для любого элемента $a\in T$ найдется подмножество $\Omega_a=\{t_1,...\}\subseteq T$ такое, что $T=\{a\}\bigcup_{t\in\Omega_a}(I(a,t)\setminus\{a\})$;
%		\item для любого $a\in T$ линейное отображение $\phi_a:A\rightarrow A$ определенное по правилу $\phi_a(b)=b^a$ для любого $b\in T$ является автоморфизмом $A$.
%	\end{enumerate} 
%\end{defi}	

Until the end of this section, we fix some algebra $A=A(T)$ of $GM(p,\eta)$-type.

Note that the definition of algebras $GM(p,\eta)$-type implies that for any group $(G,T)$ of $GM(p)$-type the algebra $A_{\F}(G,T,\eta)$ is an algebra of $GM(p,\eta)$-type.

\begin{defi}
For any element $a\in T$, we define the subspace $$A_a^+=L(\lla a,t\rra_a^+| t\in \Omega_a)$$
$$A_a^-=L(\lla a,t\rra_a^+| t\in \Omega_a)$$
\end{defi}
Clearly, $A=A_a^+\oplus A_a^-$. We prove that $(A_a^+, A_a^-)$ is a $\Z_2$-grading of the algebra $A$.

\begin{prop}\label{Zgrad}
Let $a\in T$, then $(A_a^+, A_a^-)$ is a $\Z_2$-grading of the algebra $A$.
\end{prop}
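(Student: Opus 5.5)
The plan is to mimic the proof of Proposition \ref{twograd}, replacing every appeal to the group $G$ by the axioms of a $GM(p,\eta)$-type algebra. (The definition of $A_a^-$ above contains a typo: it should be spanned by the spaces $\lla a,t\rra_a^-$, and we read it that way.)

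First we record the algebra-intrinsic analogues of the group facts that were used. By axiom 2, each dihedral subalgebra $\lla a,t\rra$, $t\in\Omega_a$, is isomorphic to $A_{\F}(\la a,t\ra,I(a,t),\eta)$. So Lemmas \ref{basis2}, \ref{prod1}, \ref{prod2}, \ref{prod3} and Corollary \ref{twogenaxial} apply inside it. In particular, $(\lla a,t\rra_a^+,\lla a,t\rra_a^-)$ is a $\Z_2$-grading of $\lla a,t\rra$. By axiom 3 together with the dimension count $\dim\lla a,t\rra=p$, the sets $I(a,t)\setminus\{a\}$ for $t\in\Omega_a$ partition $T\setminus\{a\}$. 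Hence $A=A_a^+\oplus A_a^-$, and every $b\in T$ lies in exactly one $\lla a,t\rra$ (or equals $a$).

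Next, the involution $b\mapsto b^a$ of axiom 4 preserves each $I(a,t)$, since it is the conjugation inside the dihedral algebra. So the proofs of Lemmas \ref{xxa}, \ref{xmxa} and \ref{Mi2}, carried out inside $\lla a,t\rra$, give
\[
b+b^a\in A_a^+,\qquad b-b^a\in A_a^- \quad\text{for all } b\in T.
\]
By linearity, $x+\phi_a(x)\in A_a^+$ and $x-\phi_a(x)\in A_a^-$ for all $x\in A$. Consequently $\phi_a$ acts as $+1$ on $A_a^+$ and as $-1$ on $A_a^-$. To see this, note that $A_a^+$ is spanned by $a$ and the elements $b+b^a$, because $z$ and the $y_i$ are such combinations, as in Lemmas \ref{xxa}, \ref{zbet} and \ref{alpbet}. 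Likewise, $A_a^-$ is spanned by the elements $b-b^a$. Thus $\phi_a=\tau(A_a^+,A_a^-)$ as linear maps.

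The decisive step is now immediate. By axiom 4, $\phi_a$ is an automorphism of $A$, and an automorphism of order dividing $2$ yields a $\Z_2$-grading given by its $\pm1$ eigenspaces. Indeed, if $\phi_a(x)=\epsilon x$ and $\phi_a(y)=\delta y$, then $\phi_a(x*y)=\epsilon\delta\, x*y$. This replaces Lemma \ref{aaut}, which in the group setting had to be proved from the group structure. Alternatively, following Proposition \ref{twograd} literally, the product computations reduce to expressions such as $(b\pm b^a)*(c\pm c^a)=b*c\pm\phi_a(b*c^a)\pm\ldots$, which are of the form $w\pm\phi_a(w)$ once the automorphism property is used.

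The main obstacle I expect is the identification of the spanning sets. We must check carefully that $A_a^+$ is exactly the $+1$ eigenspace of $\phi_a$ and not just contained in it, and this requires that $\phi_a$ restricted to $\lla a,t\rra$ coincides with the Miyamoto involution of the dihedral algebra. This is where we invoke Lemma \ref{Mi2} inside each $\lla a,t\rra$, transported through the isomorphism of axiom 2. To use it, the isomorphism must send $b^a$, as defined in $A$, to the group conjugate. I would resolve this by noting that both the group conjugate and $b^a$ are the unique element of $I(a,b)$ determined by the multiplication, namely the leading term of $a*b$. After that, comparing dimensions, $\dim A_a^+=1+\frac{p-1}{2}|\Omega_a|$ equals the number of $\phi_a$-orbits on $T$, which forces equality.
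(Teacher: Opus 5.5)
Your proposal is correct and is essentially the paper's argument. The paper shows $x+\phi_a(x)\in A_a^+$ and then writes each product $(b\pm b^a)*(d\pm d^a)$ as $w\pm\phi_a(w)$ using that $\phi_a$ is an automorphism; that is exactly your ``alternative'', and it is equivalent to your eigenspace formulation because $\phi_a^2=1$ (you also spell out $x-\phi_a(x)\in A_a^-$ and the spanning sets, which the paper uses tacitly). The closing dimension count is unnecessary, and it would fail for infinite $T$; you do not need it, because $A=A_a^+\oplus A_a^-$ together with $\phi_a=\pm1$ on the two summands already forces $A_a^\pm$ to be exactly the $\pm1$-eigenspaces of $\phi_a$.
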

 \begin{proof} 
 We divide the proof of the proposition into several lemmas.
Prove that for any $x\in A$ the element $x+\phi_a (x)\in A_a^+$.
%Докажим, что для любого $x\in A$ элемент $x+\phi_a (x)\in A_a^+$.    
\begin{lem}\label{apluss}
For any $x\in A$, $x+\phi_a(x)\in A_a^+$ holds.
%Для любого $x\in A$ верно $x+\phi_a(x)\in A_a^+$.
\end{lem}
\begin{proof}
Note that $x$ can be factored as a sum $x=\alpha_1x_1,...,\alpha_kx_k$, where $x_1,...,x_k\in T$. By linearity, we have $x+\phi_a(x)=\alpha_1(x_1+x_1^a)+...+\alpha_1(x_k+x_k^a)$. From Lemma \ref{xxa} it follows that $x_1+x_1^a,...,x_k+x_k^a \in A_a^+$. Thus, $x+\phi_a(x)\in A_a^+$.
%	Заметим, что $x$ раскладывается в виде суммы $x=\alpha_1x_1,...,\alpha_kx_k$, где $x_1,...,x_k\in T$. По линейности имеем $x+\phi_a(x)=\alpha_1(x_1+x_1^a)+...+\alpha_1(x_k+x_k^a)$. Из леммы \ref{xxa} следует, что $x_1+x_1^a,...,x_k+x_k^a \in A_a^+$. Таким образом $x+\phi_a(x)\in A_a^+$.
\end{proof}

Now we can prove that $A_a^+$ is a subalgebra.

\begin{lem}\label{um}
	The subspace $A_a^+$ is a subalgebra.
\end{lem}
\begin{proof}
Let $x\in \lla a, b\rra _a^+$, $y\in \lla a,d\rra_a^+$.
It suffices to show that $x*y$ can be represented as $w+\phi_a(w)$, where $w\in A$. Note that any element of $\lla a, b\rra _a^+$ can be represented as a sum of elements of the form $g+g^a$, where $g\in I(a,b)$. If $\lla a, b\rra=\lla a, d\rra$, then the assertion of the lemma follows from Corollary \ref{twogenaxial}. We can assume that $x=b+b^a, y=d+d^a$. We have:
$$(b+b^a)*(d+d^a)=b*d +b^a*d^a+b^a*d+b*d^a.$$
Since $\phi_a$ is an automorphism and $\phi_a(b)=b^a, \phi_a(d)=d^a$, then $b*d +b^a*d^a+b^a*d+b*d^a=(b*d+b^a*d)+\phi_a(b*d+b^a*d)$. Thus, from Lemma \ref{apluss} the product of any two elements from $A_a^+$ lies in $A_a^+$ and therefore $A_a^+$ is a subalgebra.

%Пусть $x\in \lla a, b\rra _a^+$, $y\in \lla a,d\rra_a^+$. 
%Достаточно показать, что $x*y$ представим в виде $w+\phi_a(w)$, где $w\in A$. Заметим, что любой элемент из $\lla a, b\rra _a^+$ представим в виде суммы элементов вида $g+g^a$ где $g\in I(a,b)$. Если $\lla a, b\rra=\lla a, d\rra$, то утверждение леммы следует из Следствия \ref{twogenaxial}. Можно считать, что $x=b+b^a, y=d+d^a$. Имеем: 
%$$(b+b^a)*(d+d^a)=b*d +b^a*d^a+b^a*d+b*d^a.$$
%Поскольку $\phi_a$ автоморфизм и $\phi_a(b)=b^a, \phi_a(d)=d^a$, то $b*d +b^a*d^a+b^a*d+b*d^a=(b*d+b^a*d)+\phi_a(b*d+b^a*d)$.
\end{proof}

\begin{lem}\label{dois}
Let $x,y\in A_a^-$. Then $x*y\in A_a^+$
%Пусть $x,y\in A_a^-$. Тогда $x*y\in A_a^+$
\end{lem}	
\begin{proof}
	Note that any element of $A_a^-$ can be represented as $w-\phi_a(w)$, where $w\in A$. Therefore, it suffices to show that the element $(b-b^a)*(d-d^a)$, where $b,d\in T$, can be represented as $w+\phi_a(w)$. We have: $$(b-b^a)*(d-d^a)=(b*d-b^a*d)+\phi_a(b*d-b^a*d).$$ Thus, it follows from Lemma \ref{aplus} that the product of any two elements from $A_a^-$ lies in $A_a^+$.
	
%Заметим, что любой элемент из $A_a^-$ представим в виде $w-w^a$, где $w\in w$. По этому, достаточно показать, что элемент $(b-b^a)*(d-d^a)$, где $b,d\in T$, представим в виде $w+\phi_a(w)$. Имеем: $$(b-b^a)*(d-d^a)=(b*d-b^a*d)+\phi_a(b*d-b^a*d).$$  
\end{proof}

\begin{lem}\label{treis}
Let $x\in A_a^-$, $y\in A_a^+$. Then $x*y, y*x\in A_a^-$
\end{lem}	
\begin{proof}
Note that any element of $A_a^-$ can be represented as $w-\phi_a(w)$, where $w\in A$, and any element of $A_a^+$ can be represented as $w'+\phi_a(w')$, where $w'\in A$. We will show that the element $(b-b^a)*(d+d^a)$, where $b,d\in T$, can be represented as $w-\phi_a(w)$. We have: $$(b-b^a)*(d+d^a)=(b*d-b^a*d)-\phi_a(b*d-b^a*d).$$
We will show that the element $(b+b^a)*(d-d^a)$, where $b,d\in T$, can be represented as $w-\phi_a(w)$. We have: $$(b+b^a)*(d-d^a)=(b*d-b*d^a)-\phi_a(b*d-b*d^a).$$
\end{proof}

From Lemmas \ref{um}, \ref{dois}, \ref{treis} the assertion of the proposition follows.

\end{proof}

For any $a\in T$, we define a linear mapping $\tau_a: A\rightarrow A$ as follows: $\tau_a(x)=x, \tau_a(y)=-y$, where $x\in A_a^+, y\in A_a^-$.
Proposition \ref{Zgrad} implies that $(A_a^+,A_a^-)$ is a $Z_2$-graded of $A$. Consequently, $\tau_a$ is an automorphism of $A$.

Note that for any $a,b\in T$, Lemma \ref{Mi2} implies that the restrictions of $\phi_a$ and $\tau_a$ to the subalgebra $\lla a,b\rra$ coincide. Since $T$ is a basis of $A$, $\phi_a$ and $\tau_a$ are equal on the whole algebra $A$.

Let us prove that the group $G=\la \phi_a|a\in T\ra$ is a $GM(p)$-group.

\begin{lem}\label{aut}
For any $\theta \in G$ and any $b\in T$, we have $\theta(b)\in T$.
\end{lem}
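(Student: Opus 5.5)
The plan is to argue on generators. Here $G=\la \phi_a\mid a\in T\ra$, and every $\theta\in G$ is a finite product $\theta=\phi_{a_1}^{\epsilon_1}\cdots\phi_{a_k}^{\epsilon_k}$ with $a_i\in T$. So it suffices to show that each generator $\phi_a$ and each inverse $\phi_a^{-1}$ maps $T$ into $T$; the lemma then follows by induction on $k$.

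For a single generator I will use the definition of a $GM(p,\eta)$-type algebra. Fix $a\in T$ and $b\in T$.

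\textbf{Case $b=a$.} Here $b^a=a$, and the element $a^a$ is interpreted inside the dihedral algebra $\lla a,c\rra$ for any $c$. In that dihedral group $a$ conjugated by itself is $a$, so $\phi_a(a)=a\in T$.

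\textbf{Case $b\neq a$.} By condition 2 of the definition, $\lla a,b\rra\simeq A_{\F}(\la a,b\ra,I(a,b),\eta)$, and its basis is $I(a,b)\subseteq T$. The element $b^a$ in condition 4 is the conjugate computed in the dihedral group $\la a,b\ra$. That group is dihedral of order $2p$, so $b^a\in I(a,b)$, which is a subset of $T$. Hence $\phi_a(b)\in T$.

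Next I show $\phi_a^{-1}=\phi_a$. Within the dihedral group $\la a,b\ra$ we have $(b^a)^a=b$, and $b^a\in I(a,b)$, so $I(a,b^a)=I(a,b)$. Therefore $\phi_a(\phi_a(b))=b$ for every $b\in T$. Since $T$ is a basis, $\phi_a^2=\mathrm{id}$, and $\phi_a$ restricts to a permutation of $T$.

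Finally the induction: if $\theta'$ maps $T$ to $T$, then $\phi_a\theta'(b)=\phi_a(\theta'(b))\in T$. So every element of $G$ permutes $T$.

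I expect the only real obstacle to be well-definedness: checking that $b^a$ is unambiguously an element of the basis $T$, rather than a formal symbol. This is exactly what the identification of $\lla a,b\rra$ with the dihedral algebra on $I(a,b)\subseteq T$ (conditions 2 and 4) provides, and I would state that identification explicitly. I would also note that the tie $\phi_a=\tau_a$ established just before the lemma is not needed here.
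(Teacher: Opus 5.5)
Your proof is correct and follows the paper's argument. You reduce to a single generator $\phi_a$ and use $\phi_a(b)=b^a\in I(a,b)\subseteq T$, with the dihedral structure supplied by condition 2 of the definition; your extra check that $\phi_a^2=\mathrm{id}$, so that inverses of generators need no separate treatment, makes explicit a point the paper passes over when it writes $\theta=\phi_{g_1}\cdots\phi_{g_k}$.
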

\begin{proof}
	We have $\theta=\phi_{g_1}...\phi_{g_k}$, where $g_1,...,g_k\in T$. It is enough to show that  $\phi_{g_1}(b)\in T$. Note that $b, g_1\in \lla b,g_1\rra$, and $\phi_{g_1}(b)=b^{g_1}\in I(b,g_1)\subseteq T$.
\end{proof}

The following statement is a consequence of Lemma \ref{tauaut}.
\begin{lem}\label{tauaut2}
We have $\phi_a\phi_b\phi_a=\phi_{\phi_a(b)}$.	
\end{lem}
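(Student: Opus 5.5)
The plan is to prove $\phi_a\phi_b\phi_a=\phi_{\phi_a(b)}$ by comparing both sides on the basis $T$. Both sides are linear, indeed automorphisms of $A$ by condition 4 of the definition, so agreement on $T$ suffices. Put $c=\phi_a(b)=b^a\in T$ (this lies in $T$ by Lemma \ref{aut}). Since $\phi_a=\tau_a$ has order $2$, the claim is equivalent to showing $\phi_a\phi_b\phi_a(d)=\phi_c(d)$ for every $d\in T$.

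The conceptual route goes through Lemma \ref{tauaut}. We have already established that $\phi_a=\tau_a$ for all $a\in T$, where $\tau_a$ is the Miyamoto involution of the grading $(A_a^+,A_a^-)$ from Proposition \ref{Zgrad}. For an automorphism $\theta$ one has $\theta(A_b^{\pm})=A_{\theta(b)}^{\pm}$. This requires checking that the grading attached to $b$ is transported by $\theta$ to the grading attached to $\theta(b)$.

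To carry out that check, describe $A_b^+$ intrinsically as the span of $b$ together with all $x+\phi_b(x)$, and $A_b^-$ as the span of all $x-\phi_b(x)$; this description comes from Lemma \ref{apluss} and its minus analogue. Intrinsically, $A_b^{\pm}$ is also the sum of the $L_b$-eigenspaces for eigenvalues in $\{1,\lm_1,\lm_2\}$ and in $\{-\lm_2\}$ respectively. An automorphism $\theta=\phi_a$ maps eigenspaces of $L_b$ to eigenspaces of $L_{\theta(b)}$ with the same eigenvalues, because $\theta(b*x)=\theta(b)*\theta(x)$. Hence $\phi_a(A_b^{\pm})=A_c^{\pm}$. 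Consequently $\phi_a\tau_b\phi_a$ acts as $+1$ on $A_c^+$ and as $-1$ on $A_c^-$, so it equals $\tau_c=\phi_c$.

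The main obstacle is justifying that $A_b^{\pm}$ coincides with the stated eigenspace sums in the abstract $GM(p,\eta)$ setting, since the paper defined it via $\Omega_b$-decompositions. This follows as in Lemma \ref{spec}, using condition 3 and the dihedral structure given by condition 2 together with Lemma \ref{basis2}; the good characteristic assumption ensures the four eigenvalues are distinct. As a fallback, one can verify $\phi_a\phi_b\phi_a(d)=\phi_c(d)$ directly on $d\in T$, using that each $\phi$ acts on $T$ by a permutation satisfying $\phi_a(d)=d^a$ in the dihedral algebras.
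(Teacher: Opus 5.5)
Your proof is correct and follows the paper's route: the paper simply invokes Lemma~\ref{tauaut} (an automorphism $\theta$ conjugates $\tau_b$ to $\tau_{\theta(b)}$) together with the already established identity $\phi_a=\tau_a$. Your characterization of $A_b^{\pm}$ as sums of $L_b$-eigenspaces supplies the justification that the paper leaves implicit for applying that lemma to the gradings of Proposition~\ref{Zgrad}; it relies on the distinctness of $1,\lm_1,\lm_2,-\lm_2$.
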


\begin{lem}\label{MiyGM}
Let $a,b\in T$. Then $|\phi_a\phi_b|=p$.  
\end{lem}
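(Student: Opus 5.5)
The plan is to reduce the claim to the dihedral subalgebra $\lla a,b\rra$ and to check the order of $\phi_a\phi_b$ by its action on the basis $T$. If $a=b$ there is nothing to prove (presumably the statement intends $a\neq b$, and then $\phi_a\phi_b$ should have order $p$). So assume $a\neq b$. By condition 2 of the definition of $GM(p,\eta)$-type, $\lla a,b\rra\simeq A_{\F}(\la a,b\ra,I(a,b),\eta)$, where $\la a,b\ra$ is dihedral of order $2p$.

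First, on $\lla a,b\rra$ the automorphisms $\phi_a,\phi_b$ act on the canonical basis $B(a,b)=I(a,b)$ by $\phi_c(d)=d^c$. This is exactly conjugation in the dihedral group $D_{2p}$, and its kernel on $I(a,b)$ is $Z(D_{2p})=1$ since $p$ is odd. Hence $\phi_a\phi_b$ restricted to $\lla a,b\rra$ corresponds to conjugation by the rotation $ab$, an element of order $p$ in $D_{2p}$. It acts on the $p$ reflections as a $p$-cycle, so its restriction has order exactly $p$. This gives $|\phi_a\phi_b|\geq p$, and in fact $p$ divides $|\phi_a\phi_b|$.

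Second, I will show $(\phi_a\phi_b)^p=\mathrm{id}$ on all of $A$. Since $T$ is a basis, it suffices to show that $(\phi_a\phi_b)^p$ fixes every $c\in T$.
\begin{itemize}
\item By Lemma \ref{aut}, the group $G=\la\phi_t\mid t\in T\ra$ permutes $T$. By Lemma \ref{tauaut2}, $\phi_{\theta(t)}=\theta\phi_t\theta^{-1}$ for all $\theta\in G$, so the map $t\mapsto\phi_t$ is $G$-equivariant.
\item Each $\phi_t$ is an involution. Indeed $\phi_t=\tau_t$, and $\tau_t^2=1$ because $\tau_t$ is the grading automorphism.
\item For any $c\in T$, condition 2 applied to the pair $(a,c)$, together with condition 4, gives that $\phi_a,\phi_c$ generate on $I(a,c)$ a dihedral action of order $2p$. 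In particular $(\phi_a\phi_c)^p$ fixes $c$.
\end{itemize}
The remaining difficulty is that this controls $\phi_a\phi_c$, not $\phi_a\phi_b$ acting on a third element $c$.

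This is the main obstacle: proving that $u=(\phi_a\phi_b)^p$ is the identity on elements $c\notin I(a,b)$. My first attempt would use the structure that $u$ is forced to have. It lies in $G$ and fixes pointwise the set $I(a,b)$. By equivariance, $u$ commutes with $\phi_t$ for every $t\in I(a,b)$, since $\phi_{u(t)}=u\phi_tu^{-1}$ and $u(t)=t$. Then $u$ preserves each $\tau_a$-eigenspace decomposition $A_a^\pm$ and the $L_a$-eigenspaces, because it commutes with $\phi_a=\tau_a$ and is an automorphism fixing $a$.

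Since $u$ permutes $T$ and is an algebra automorphism fixing $a$, it maps $\lla a,c\rra$ to $\lla a,u(c)\rra$. I would try to show $u(c)\in I(a,c)$ by a linear-independence argument in the style of Lemma \ref{inter}. Then, because $u$ also fixes $b$ and commutes with $\phi_b$, it should act trivially on $I(a,c)$.

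If this route fails, I would fall back on expanding $u$ via condition 4 on a basis element $c$: compute $u(a*c)=a*u(c)$ and compare supports in the basis $T$, using that $a*c$ involves exactly $I(a,c)$. Comparing these supports forces $u(c)\in I(a,c)$, and the dihedral structure then gives $u(c)=c$.
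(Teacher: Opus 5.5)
\textbf{Verdict: there is a genuine gap.} The missing idea is to use the equivariance of Lemma~\ref{tauaut2} to turn the dihedral structure of $I(a,b)$ into a group relation, so you never need to examine elements outside $I(a,b)$.

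Your first step is fine and is the same as the paper's. On $\lla a,b\rra$ the maps $\phi_a,\phi_b$ act on $I(a,b)$ as conjugation in $D_{2p}$, so $p$ divides $|\phi_a\phi_b|$; you are also right that $a\neq b$ must be assumed.

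The gap is the reverse bound. You reduce it to showing that $u=(\phi_a\phi_b)^p$ fixes every $c\notin I(a,b)$, but you only sketch what you would try, and your concrete fallback does not work. Since $u$ permutes $T$ and fixes $a$, comparing supports in $u(a*c)=a*u(c)$ gives only $u(I(a,c))=I(a,u(c))$. That holds for \emph{every} automorphism that fixes $a$ and permutes $T$, so it says nothing about whether $u(c)\in I(a,c)$. Even granting $u(c)\in I(a,c)$, the claim that $u$ ``should act trivially on $I(a,c)$'' is unsupported: an automorphism fixing $a$ and $b$ could a priori induce on $I(a,c)$ the same nontrivial permutation as $\phi_a$. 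As written, nothing rules out $|\phi_a\phi_b|=kp$ with $k>1$.

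Here is how to close it with the tools you already listed. Enumerate $I(a,b)$ canonically by $c_1=a$, $c_2=b$, $c_{i+1}=\phi_{c_i}(c_{i-1})$. Inside the dihedral algebra $\lla a,b\rra$ this sequence is periodic, with $c_{p+1}=a$ and $c_{p+2}=b$. Lemma~\ref{tauaut2} gives $\phi_{c_{i+1}}=\phi_{c_i}\phi_{c_{i-1}}\phi_{c_i}$. Induction, using $\phi_a^2=\phi_b^2=1$, then gives $\phi_{c_k}=(\phi_b\phi_a)^{k-2}\phi_b$. At $k=p+2$ this reads $(\phi_b\phi_a)^p\phi_b=\phi_b$, so $(\phi_a\phi_b)^p=1$.

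The paper expresses the same idea as a counting argument. If $|\phi_a\phi_b|=n>p$, the orbit of $a$ under $D=\la\phi_a,\phi_b\ra$ lies in $I(a,b)$, which has $p$ elements. By Lemma~\ref{tauaut2} this orbit maps onto the $D$-conjugacy class of $\phi_a$, which is too large. To make that airtight, note that $b$ lies in the orbit, so $\phi_a$ and $\phi_b$ are conjugate in $D$. Hence $n$ is odd, and the class has $n>p$ elements.
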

\begin{proof}
	Note that the restriction of the operator $\phi_a\phi_b$ to the subalgebra $\lla a, b\rra$ has order $p$. Therefore, $|\phi_a\phi_b|$ is divisible by $p$.
	Suppose that $|\phi_a\phi_b|=n>p$. Let $D=\la \phi_a,\phi_b\ra$. Then Lemma \ref{aut} implies that $a^g\in T$ for any $g\in D$. Lemma \ref{tauaut2} implies that $|a^D|=n$. Thus $|I(a,b)|=n$, but the definition of the algebra $\lla a,b\rra$ implies $|I(a,b)|=p$; a contradiction.
	
	%Предположим, что $|\phi_a\phi_b|=n>p$. Пусть $D=\la \phi_a,\phi_b\ra$. Тогда из леммы \ref{aut} следует, что $a^g\in T$ для любого $g\in D$. Из леммы \ref{tauaut2} следует, что $|a^D|=n$. Таким образом $|I(a,b)|=n$, но из определению алгебры $\lla a,b\rra$ следует $|I(a,b)|=p$; противоречие.
\end{proof}	
From Lemma \ref{MiyGM} follows Theorem 4.
\bigskip

\noindent\textbf{Theorem 4.}
\textit{Let $A(T)$ be an algebra of $GM(p,\eta)$-type. Then $M=Miy(A,T)$ is a $GM(p)$-group and $A(T)\cong A_{\F}(M,T,\eta)$.	
}
\begin{proof}
Lemma \ref{MiyGM} implies that $M$ is a $GM(p)$-group. Let $B=A_{\F}(M,T,\eta)$. Multiplication in the algebra $B$ will be denoted by $\circ$. Since $T$ is a basis in $A$ and $B$, it suffices to show that $a*b=a\circ b$ for any $a,b\in T$. This assertion follows from the fact that $\lla a,b\rra_*=\lla a,b\rra_{\circ}$ by definition.
\end{proof}

      \bigskip

Ilya~B. Gorshkov

Sobolev Institute of Mathematics,

Novosibirsk, Russia,

E-mail address: ilygor8@gmail.com

\end{document}